\renewcommand{\baselinestretch}{1}
\def \leq {\leqslant}
\def \geq {\geqslant}
\numberwithin{equation}{section}
\def\ind#1{\lower5pt\hbox{$\scriptstyle #1$}}
\def \X {\mathscr{E}}
\def \x {\mathbf{x}}
\def \z {\mathbf{z}}
\def\y {\mathbf{y}}
\def\lp {L^1_+}
\def\lm {L^1_-}
\def \ff {\mathscr{F}}
\def \d {\mathrm{d}}
\def \D {\mathscr{D}}
\def \G {\mathscr{G}}
\def \Be {\mathscr{B}}
\def \ml {M_{\lambda}}
\def \K {\mathcal{K}}
\def \A {\mathcal{A}}
\def \uot {(U_0(t))_{t \geq 0}}
\def \vt {(V_H(t))_{t \geq 0}}
\def \T {\mathcal{T}}
\def \B {\mathsf{B}}
\def \cc {\mathfrak{a}}
\def \t {\tau}
\def \O {\mathbf{\Omega}}
\renewcommand{\baselinestretch}{1.2}
\newtheorem{theorem}{Theorem}[section]
\newtheorem{definition}{Definition}[section]
\newtheorem{proposition}{Proposition}[section]
\newtheorem{lemma}{Lemma}[section]
\newtheorem{corollary}{Corollary}[section]
\theoremstyle{definition}
\newtheorem{remark}{Remark}[section]
\begin{document}

\title[]
{{Transport semigroup associated to positive boundary conditions of unit norm: a Dyson-Phillips approach}}

\author{Luisa Arlotti \& Bertrand Lods}

\address{\textbf{Luisa Arlotti}, Universit\`{a} degli Studi di Udine, 33100 Udine, Italy.}
\email{arlotti@uniud.it.}

\address{\textbf{Bertrand Lods},  Universit\`{a} degli
Studi di Torino \& Collegio Carlo Alberto, Department of Economics and Statistics, Corso Unione Sovietica, 218/bis, 10134 Torino, Italy.}\email{lodsbe@gmail.com}

\maketitle

\begin{abstract}
We revisit our study of general transport operator with general force field and general invariant measure by considering, in the $L^1$ setting, the linear transport operator $\T_H$ associated to a linear and positive  boundary operator $H$ of unit norm. It is known that in this case an extension of $\T_H$ generates a substochastic (i.e. positive contraction) $C_0$-semigroup $(V_H(t))_{t\geq 0}$.  We show here that $(V_H(t))_{t\geq 0}$ is the smallest substochastic $C_0$-semigroup with the above mentioned property and provides a representation of $(V_H(t))_{t \geq 0}$ as the sum of an expansion series similar to Dyson-Phillips series. We develop an honesty theory for such boundary perturbations that allows to consider the honesty of trajectories on subintervals $J \subseteq [0,\infty)$. New necessary and sufficient conditions for a trajectory to be honest are given in terms of the aforementioned series expansion.
\medskip\\
\textit{AMS Subject Classifications} (2000): 47D06, 47N55, 35F05, 82C40\\
 \textit{Key words}: Transport equation, boundary conditions,
substochastic semigroups, honesty theory.
\end{abstract}

\section{Introduction}
\label{sec1}

We investigate here the  well-posedness (in the sense
of semigroup theory) in $L^1(\O,\d\mu)$ of the general transport equation
\begin{subequations}\label{1}
\begin{equation}\label{1a}
\partial_t f(\x,t)+\mathscr{F}(\x)\cdot \nabla_\x
f(\x,t) =0 \qquad (\x \in \O , \:t
> 0),\end{equation}
supplemented by the abstract boundary condition
\begin{equation}\label{1b}
f_{|\Gamma_-}(\y,t)=H(f_{|\Gamma_+})(\y,t), \qquad \qquad (\y \in
\Gamma_-, t >0),
\end{equation}
and the initial condition
\begin{equation}\label{1c}f(\x,0)=f_0(\x), \qquad \qquad (\x \in \O). \end{equation}\end{subequations}
Here $\O$ is a sufficiently smooth open subset of $\mathbb{R}^N$ endowed with a positive Radon measure $\mu$,
$\Gamma_{\pm}$ are suitable boundaries of the phase space and the field $\mathscr{F}$ is globally Lispchitz and \textrm{divergence free} with respect to $\mu$, in the sense that $\mu$ is a measure invariant by the (globally defined) flow associated to $\mathscr{F}$. Our main concern here is the influence of the boundary conditions \eqref{1b} and we treat here the delicate case of a boundary operator
$$H \::\:L^1_+ \to L^1_-$$
which is linear, positive,  bounded ($L^1_\pm$ being suitable trace spaces
corresponding to the boundaries $\Gamma_\pm$, see Section 2 for
details) and of unit norm
\begin{equation}\label{H1} \|H\|_{\mathscr{B}(\lp,\lm)}={\sup_{f \in \lp, \|f\|_{\lp}=1}}\|H f\|_{\lm}=1.\end{equation}

Our motivation for studying such a problem is the study of kinetic equation of Vlasov-type for which
the phase space $\O$ is a cylindrical domain $\O= {D}\times
\mathbb{R}^N \subset \mathbb{R}^{2N}$ ( ${D }$ being a sufficiently
 smooth open subset of $\mathbb{R}^N$) and the field $\mathscr{F}$ is given by \begin{equation}\label{transportneu}
 \mathscr{F}(\x)=(v,\mathbf{F}(x,v)) \quad \text{ for any } \quad \x=(x,v) \in
 \O\end{equation}
$\mathbf{F}\::\:\O \to \mathbb{R}^N$ is a time independent force field. The simplest (but already very rich) example of such a kinetic equation is the so-called \emph{free-streaming} equation for which $\mathbf{F}=0$. Boundary conditions in such kinetic equations are usually modeled by a boundary operator $H$ which relates the incoming and outgoing boundary fluxes of particles; the form of this operator depends on the gas-surface interaction (see \cite{cer} for more details on such a topic).

The mathematical study of the aforementioned problem has already a long story starting from  the seminal paper
\cite{bard} who considered the case in which $\mu$ is the Lebesgue measure and the so-called `no
re-entry' boundary conditions (i.e. $H=0$ in \eqref{1b}). More
general fields and boundary conditions (but still mostly
associated with the Lebesgue measure) have been considered in \cite{beals}. The free-streaming case (i.e. $\mathscr{F}(x,v)=(v,0)$) received much more attention, starting from \cite{voigt}, where the free streaming transport operator associated to different boundary operators $H$ is deeply investigated (see also \cite{lods} for general boundary conditions). Recently,  transport operators associated to general external fields and general measures, with general bounded boundary conditions have been thoroughly investigated by the authors in collaboration with J. Banasiak in a series of papers \cite{abl1, abl2, al3}  that contain both a generalization of the theory developed in the free streaming case and some new results. Summarizing the known results on this topic, one can say that the transport operator associated to $H$, that we shall denote $\T_H$ (see Section 2 for a precise definition)  is the generator of a strongly continuous semigroup when the boundary operator $H$ is a contraction (and also for some very peculiar multiplying boundary conditions, \cite{voigt, lods, abl2,al3,boulanouar}). \medskip

A very interesting and important case, both from the mathematical and physical point of view, arises whenever $H$ is a \emph{positive boundary operator of unit norm} (see \eqref{H1}). In such a case,  one can not state \emph{a priori} that $\T_H$ generates a $C_0$-semigroup in $L^1(\O,\d\mu)$.  Nevertheless, since for each $ r \in [0,1)$ the operator $H_r := rH$ is a strict contraction, the transport operator ${\T}_{H_r}$ associated to $H_r$ does generate  a $C_0$- semigroup $(V_r(t))_{t\geq 0}$. These semigroups are \emph{substochastic, i. e. they are positive contraction semigroups} and one can show that the strong limit $\lim_{r\nearrow 1-}V_r(t) := V_H(t)$ exists and defines a $C_0$-semigroup in $L^1(\O,\d\mu)$. Its generator  $\mathcal{A}$ is then an extension of $\T_H$ and a natural question is to recognize  if  $\mathcal{A} = {\overline{\T_H}}$ or not. For conservative conditions, i.e. if
 $$\|Hf\|_{\lm}=\|f\|_{\lp} \qquad \forall f \in \lp$$
 it is known that the semigroup $(V(t))_{t\geq 0}$ is conservative if and only if
 $\mathcal{A}=\overline{\T_H}$. On the contrary, whenever $\mathcal{A} \supsetneq {\overline{\T_H}}$ a mass loss occurs, i.e. there exists nonnegative $f$ such that $\|V_H(t)f \| < \|f\|$ for some $t > 0$.

 \medskip

As first observed in \cite{all}, such a problematic is very similar to what occurs in the so-called substochastic theory of additive perturbations of semigroups, (see the monograph \cite{ba}), where one is faced with the following problem: let $(T,D(T))$ be the generator of a substochastic $C_0$-semigroup $(G_T(t))_{t\geq 0}$ in $X = L^1(\Sigma,\d\nu)$ (where $(\Sigma,\nu)$ is a given measure space) and let $(B,\D(B))$ be a non-negative linear operator in $X$ such that $\D(T) \subseteq \D(B)$ and $\int_\Sigma (T + B)f \d\nu \leq 0$ for all $f\in \D(T)_+ = \D(T)\cap X_+$. Then for any $0 < r < 1$ operator $(T + rB,\D(T))$ generates a $C_0$- semigroup $(G_r(t))_{t\geq 0}$. These semigroups are such that the strong limit $\lim\limits_{r\nearrow 1-}G_r(t) := G_K(t)$ exists and the family $(G_K(t))_{t\geq 0}$ is a $C_0$-semigroup generated by an extension $K$ of $T + B$. In the context of additive perturbations of substochastic semigroups a complete characterization of $K$ is given; it is shown that $(G_K(t))_{t\geq 0}$ is the smallest (in the lattice sense) $C_0$-semigroup  generated by an extension of $T + B$. Moreover $G_K(t)$ can be written as the sum of a strongly convergent series of linear positive operators (Dyson-Phillips expansion series) and a satisfying honesty theory, dealing with the mass carried by individual trajectories, has been developed  \cite{ba, almm, mmv}. Such a honesty theory for additive perturbation has been based mainly on the so-called \emph{resolvent approach} (i.e. on the study of the resolvent of $(\lambda-K)^{-1}$) and such a resolvent approach has been applied to the boundary perturbation case in \cite{all,mmk}. Recently a new approach to honesty has been proposed, based now on the semigroup approach and the fine properties of the Dyson-Phillips iterated \cite{almm}. Such an approach is equivalent to the resolvent one but its main interest lies in the fact that it is robust enough to be applied to other kind of problems in which the resolvent approach would be inoperative (e.g. for non-autonomous families \cite{almm1} or integrated semigroups \cite{almm2}).\medskip

In the present paper we want to recognize that a fully similar study can be carried out for the operator $\T_H$. Notice that several results concerning the transport operator $\T_H$ and the semigroup $(V_H(t))_{t \geq 0}$ are already available in the literature. A complete characterization of $\A$ is given in \cite{abl2} where it is shown that $\A$ is an extension of $\T_H$; the study of conservative boundary conditions has been performed, in the free-streaming case, in \cite{all} and, for general force fields, in \cite{abl2}. The general case of boundary operators with unit norm has been handled with in \cite{mmk} where a detailed \textit{honesty theory} has been performed. Nevertheless the obtained results are not so satisfying as those obtained in the substochastic
theory of additive perturbations of semigroups. In particular the question of whether $(V_H(t))_{t \geq 0}$ is the \emph{smallest} substochastic $C_0$-semigroup generated by an extension of $\T_H$ remains open and the honesty theory performed in \cite{abl2, mmk}  is based on the resolvent approach only.

The objective of the present paper is to fill this blank. In particular, the main novelty of the paper lies in the  following:
 \begin{enumerate}[i)]
 \item First, we prove  that indeed the semigroup $(V_H(t))_{t \geq 0}$ is the smallest (in the lattice sense) substochastic $C_0$-semigroup generated by an extension of $\T_H$.
 \item Second, and more important, we develop a 'semigroup approach' to the honesty theory of boundary perturbations, exploiting the recent results in \cite{al3} which allow  to provide a characterization of the semigroup $(V_H(t))_{t \geq 0}$ as an expansion series, similar to the Dyson-Phillips arising in the additive perturbation case. While the resolvent approach allows to establish necessary and sufficient conditions for a trajectory to be honest (i. e. honest on $[0,\infty)$) the new semigroup approach allows to establish more general necessary and sufficient conditions for a trajectory to be honest on a subinterval $J \subseteq [0,\infty)$. We strongly believe that such a semigroup approach has its own interest and that, as it occurs for additive perturbation \cite{almm1}, it could hopefully be extended to deal with non-autonomous problems.
 \end{enumerate}

 To be more precise, the contents of the paper are  as follows. In Section 2 we introduce the necessary notation and define the transport operator $\T_H$. This section is mainly taken from the recent contributions \cite{abl2,al3}. In Section 3 we establish the most important properties of the semigroup $(V_H(t))_{t\geq 0}$ and its generator, in particular showing that $(V_H(t))_{t\geq 0}$ is the smallest substochastic $C_0$-semigroup generated by an extension of $\T_H$. In Section 4 we develop the honesty theory for boundary perturbations, introducing first useful functionals and defining then the concept of honesty of trajectories on subintervals $J \subseteq [0,\infty)$. We obtain also necessary and sufficient conditions for the honesty in the spirit of \cite[Section 6]{abl2} and \cite{mmk} not only using the usual resolvent approach but also using the series approach introduced in \cite{almm}. In Section 6 two well-known examples are revisited using our new approach, that allows us to deduce new interesting properties.

\section{Preliminaries}

In the present section, we introduce the general mathematical framework we shall consider in the sequel. The material from this section is mainly taken from \cite{abl1,abl2} and we refer to these two contributions for further properties of abstract transport operators. We begin with the rigorous definition of the transport operator $\T_H$ associated to bounded boundary operator $H.$

\subsection{Definition of the transport operator ${\T}_H$.}
\label{sec2}

In this paper we consider transport operators associated to general external fields and general measures, according to the theory developed in two recent contribution \cite{abl1},\cite{abl2}. More precisely, given a smooth open subset $\O$ of $\mathbb{R}^N$, we consider  a time independent globally Lipschitz vector
field $\mathscr{F} \::\:\mathbb{R}^N \to \mathbb{R}^N$ so that, for any $\x \in \O$, the Cauchy problem
\begin{equation}\label{chara}
\frac{\d \mathbf{{X}}}{\d t}(t)=\ff(\mathbf{{X}}(t)), \qquad \forall
t \in \mathbb{R}\:;\qquad \mathbf{{X}}(0)=\x \in \O\end{equation}
admits a unique global solution
$$(\x,t) \in \O \times \mathbb{R} \longmapsto {\Phi}(\x,t) \in \mathbb{R}^N,$$
that allows to define a flow $(T_t)_{t \in \mathbb{R}} $ given by $T_t=\Phi(\cdot, t)$. As  in \cite{abl1}, we
assume that there exists a Radon measure $\mu$ over $\mathbb{R}^N$ which is  invariant under the
flow $(T_t)_{t \in \mathbb{R}}$, i.e.
\begin{equation}\label{ass:h2} \mu(T_t A) = \mu(A) \text{ for any
measurable subset } A \subset \mathbb{R}^N \text{ and any }  t \in
\mathbb{R}.\end{equation}

Of course, solutions to \eqref{chara} do not necessarily belong to
$\O$ for all times, leading to the definition of stay times of the
characteristic curves in $\O$: for any $\x \in {\O}$, define
\begin{equation}\label{stay}
\tau_{\pm}(\x)=\inf \{s > 0\,;{\Phi}(\x,\pm s) \notin {\O}\},\end{equation} with the
convention that $\inf \varnothing=\infty.$ This allows to represent $\O$ as $\O=\O_{\pm} \cup \O_{\pm \infty}$ where
$${\O}_{\pm}=\{\x \in {\O}\,;\,\t_{\pm}(\x) < \infty\}, \qquad \text{and} \quad
{\O}_{\pm\infty}=\{\x \in {\O}\,;\,\t_{\pm}(\x) = \infty\}.$$
Moreover, we define the \textit{incoming} and \textit{outgoing} boundaries as
\begin{equation}\label{gammapm} \Gamma_{\pm}:=\left\{\y \in \partial
{\O}\,;\exists \x \in {\O},\, \t_{\pm}(\x) < \infty \text{ and }
\y={\Phi}(\x,\pm
\t_{\pm}(\x))\,\right\}.\end{equation}
The definition of the stay time $\tau_{\pm}$ extends then to $\Gamma_{\pm}$ by setting simply $\tau_{\pm}(\y)=0$ and $\tau_{\mp}(\y)=\tau_+(\x)+\tau_-(\x)$ for any $\y \in \Gamma_{\pm}$ with $\y={\Phi}(\x,\pm
\t_{\pm}(\x))$. Notice that, with the above definition, $\tau_{\mp}(\y)$ is well defined (i.e. the definition does not depend on the choice of $\x \in \O_{\pm}$) and $\tau_{\mp}(\y)$ is nothing but the length of the characteristic curves having $\y$ as its left (respectively right) end-point. We finally set
$$\Gamma_{\pm\infty}=\{\y \in \Gamma_{\pm}\,;\, \t_{\mp}(\y) = \infty\}.$$
With such notations, one can prove (see \cite[Section 2]{abl1}) the existence of unique positive Borel
measures $\mu_\pm$ on $\Gamma_\pm$ such that the measure $\mu$ on
$\O_\pm$ is identified to the product measure of $\mu_\pm$ with the Lebesgue measure on $\mathbb{R}$ (see \cite[Proposition 2.10]{abl1}). The construction of such measures allow to define the trace spaces
$$L^1_{\pm}=L^1(\Gamma_\pm,\d\mu_\pm)$$
with the usual norm. In the Banach space
$$X:=L^1(\O,\d\mu)$$
endowed with its usual norm, we can define the
maximal \textit{transport operator} $(\T_\mathrm{max},\D(\T_\mathrm{max}))$  as follows  (see \cite[Theorem 3.6]{abl1})
\begin{definition}\label{def:transp}
Given $f\in L^1(\O,\d\mu),$ $f$ belongs to the domain $\D(\T_\mathrm{max})$ of $\T_\mathrm{max}$ if and only if
there exists $g \in L^1(\O,\d\mu)$ and a \emph{representative} $f^\sharp$ of $f$
(i.e. $f^\sharp(x) = f(x)$ for $\mu$-a.e. $x\in\O$) such that, for $\mu$-almost every $\x \in \O$ and any $-\tau_-(\x) < t_1 \leq t_2 < \tau_+(\x)$ one has
\begin{equation}\label{fsharp}
f^\sharp(\Phi(\x,t_1)) - f^\sharp(\Phi(\x,t_2)) = \int_
{t_1}^{t_2}g(\Phi(\x,s))\d s.
\end{equation}
In this case, we set ${\T}_\mathrm{max}f = g$.
\end{definition}
\begin{remark} Notice that the above operator $\T_{\mathrm{max}}$ is \emph{well-defined}, i.e. $\T_\mathrm{max}f$ does not depend on the representative $f^\sharp$. Finally, we wish to emphasize the fact that the domain $\D(\T_\mathrm{max})$ is precisely the set of functions $f \in L^1(\O, \d\mu)$ that admit a
representative which is absolutely continuous along almost any characteristic curve.
\end{remark}

With the above definition, each function $f\in \D(\T_\mathrm{max})$ is such that the limits
\begin{equation*}
\B^+f(\y):=\lim_{s \to 0+}f^\sharp({\Phi}(\y,-s)) \qquad \text{ and } \qquad \B^-f(\y):=\lim_{s \to 0+}f^\sharp({\Phi}(\y,s))\end{equation*} exist for almost $\mu_{\pm}$-every $\y \in \Gamma_\pm$ \cite[Proposition 3.16, Definition 3.17]{abl1}. Notice that the \textit{traces} $\B^\pm f$ of a given $f\in \D(\T_\mathrm{max})$ does not necessarily belong to $L^1_\pm$.
Nevertheless one can prove the following  \cite[Theorem 3.1, Proposition 3.2, Corollary 2.1]{abl2} :
\begin{theorem}
\label{theotrace} Define the following measures over $\Gamma_{\pm}$:
$$\d\xi_{\pm}(\y)=\min\left(\t_{\mp}(\y),1\right)\d\mu_{\pm}(\y),
\qquad \y \in \Gamma_{\pm}.$$ Then, for any $f \in \D(\T_\mathrm{max})$,
the trace $\B^{\pm}f$ belongs to $Y_{\pm}:=L^1(\Gamma_{\pm},\d\xi_{\pm})$ with
$$\|\B^{\pm}f\|_{Y_{\pm}} \leq
\|f\|_X+\|\T_\mathrm{max}f\|_{X}, \qquad f \in
\D(\T_\mathrm{max}).$$
Moreover
$$\mathscr{W}:=\left\{f \in
\D(\T_\mathrm{max})\,;\,\B^-f \in \lm\right\} =\left\{f \in
\D(\T_\mathrm{max})\,;\,\B^+f \in \lp\right\} $$
 and the \textit{Green formula}
 \begin{equation}\label{greenform}\int_{\O}\T_{\mathrm{max}}f\d\mu=\int_{\Gamma_-}\B^-f\d\mu_-
-\int_{\Gamma_+}\B^+f\,\d\mu_+.\end{equation}
holds for any $f\in\mathscr{W}$.
\end{theorem}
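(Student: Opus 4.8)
The plan is to derive all three assertions from the product decomposition $\mu_{|\O_{\pm}}=\mu_{\pm}\otimes\d s$ recalled above — which identifies $\mu$-a.e.\ point of $\O_-$ with a pair $(\y,s)$, $\y\in\Gamma_-$, $0<s<\tau_+(\y)$, via $\x=\Phi(\y,s)$, and symmetrically for $\O_+$ through the backward flow — together with the fundamental theorem of calculus for the representative $f^\sharp$. Indeed, for $f\in\D(\T_\mathrm{max})$ and $g:=\T_\mathrm{max}f$, identity \eqref{fsharp} shows that $\sigma\mapsto f^\sharp(\Phi(\y,\sigma))$ is absolutely continuous with derivative $-g(\Phi(\y,\sigma))$, so that for $\mu_-$-a.e.\ $\y\in\Gamma_-$
\begin{equation}\label{eq:pfFTC}
f^\sharp(\Phi(\y,s))=\B^-f(\y)-\int_0^s g(\Phi(\y,\sigma))\,\d\sigma,\qquad 0<s<\tau_+(\y),
\end{equation}
and an analogous identity on $\Gamma_+$. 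For the trace estimate I would average \eqref{eq:pfFTC} over $s\in(0,m(\y))$ with $m(\y):=\min(\tau_+(\y),1)$, solve for $\B^-f(\y)$, and use $m(\y)\le1$ to dominate the resulting double integral of $|g|$, obtaining $m(\y)\,|\B^-f(\y)|\le\int_0^{\tau_+(\y)}\big(|f^\sharp|+|g|\big)(\Phi(\y,s))\,\d s$. Integrating over $\Gamma_-$ and invoking Fubini with $\mu_{|\O_-}=\mu_-\otimes\d s$ turns the left-hand side into $\|\B^-f\|_{Y_-}$ and the right-hand side into $\int_{\O_-}(|f|+|g|)\,\d\mu\le\|f\|_X+\|\T_\mathrm{max}f\|_X$; the bound for $\B^+f$ is identical on $\O_+$.

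For the equality of the two sets, split $\Gamma_{\pm}=\Gamma_{\pm}^b\cup\Gamma_{\pm\infty}$, where $\Gamma_{\pm}^b:=\Gamma_{\pm}\setminus\Gamma_{\pm\infty}$ gathers the points lying on a characteristic curve of finite length. On $\Gamma_{\pm\infty}$ the weight $\min(\tau_{\mp},1)$ equals $1$, so the trace estimate already gives $\B^{\pm}f\in L^1(\Gamma_{\pm\infty},\d\mu_{\pm})$ for \emph{every} $f\in\D(\T_\mathrm{max})$. On the finite-length curves, the map $\kappa:\y\mapsto\Phi(\y,\tau_+(\y))$ is a bijection $\Gamma_-^b\to\Gamma_+^b$ pushing $\mu_-$ onto $\mu_+$ — a consequence of the flow-invariance of $\mu$ together with the product structure, established in \cite{abl1} — and \eqref{eq:pfFTC} evaluated at $s=\tau_+(\y)$ reads $\B^+f(\kappa(\y))=\B^-f(\y)-\int_0^{\tau_+(\y)}g(\Phi(\y,\sigma))\,\d\sigma$. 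Integrating absolute values over $\Gamma_-^b$ and using the measure-preserving property of $\kappa$ yields $\int_{\Gamma_+^b}|\B^+f|\,\d\mu_+\le\int_{\Gamma_-^b}|\B^-f|\,\d\mu_-+\|\T_\mathrm{max}f\|_X$, and symmetrically. Hence $\B^-f\in\lm$ if and only if $\B^+f\in\lp$, which is exactly the claimed identity; we then write $\mathscr{W}$ for the common set.

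For the Green formula, take $f\in\mathscr{W}$, so $g=\T_\mathrm{max}f\in X$, and decompose $\O$ into the four flow-invariant strata determined by the finiteness of $\tau_+$ and of $\tau_-$. On the stratum of finite-length curves joining $\Gamma_-$ to $\Gamma_+$ I would integrate $g$ along curves and use $\kappa$ as above to get the contribution $\int_{\Gamma_-^b}\B^-f\,\d\mu_--\int_{\Gamma_+^b}\B^+f\,\d\mu_+$. On the stratum parametrized by $\Gamma_{-\infty}$ (curves entering at $\Gamma_-$ and never leaving), for $\mu_-$-a.e.\ $\y$ the map $s\mapsto f^\sharp(\Phi(\y,s))$ is absolutely continuous on $(0,\infty)$ and lies in $L^1(0,\infty)$ by Fubini, hence tends to $0$ at $+\infty$; letting $s\to\infty$ in \eqref{eq:pfFTC} gives $\int_0^\infty g(\Phi(\y,s))\,\d s=\B^-f(\y)$, so this stratum contributes $\int_{\Gamma_{-\infty}}\B^-f\,\d\mu_-$, and symmetrically the stratum coming from infinity and exiting at $\Gamma_+$ contributes $-\int_{\Gamma_{+\infty}}\B^+f\,\d\mu_+$. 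On the remaining stratum $\O^{\infty\infty}$ of curves that never meet $\partial\O$, the same argument applied at both ends shows $\int_{-\infty}^\infty g(\Phi(\x,s))\,\d s=0$ along a.e.\ curve; to conclude that its contribution is $0$ I would invoke the disintegration of $\mu$ along flow lines on this stratum, or, alternatively, observe that $h\mapsto h\circ T_{-t}$ is a $\mu$-preserving $C_0$-group on $L^1(\O^{\infty\infty},\d\mu)$, that $t^{-1}(f\circ T_{-t}-f)\to g$ there by \eqref{eq:pfFTC}, and that each $T_{-t}$ preserves $\int\cdot\,\d\mu$, so $\int_{\O^{\infty\infty}}g\,\d\mu=\lim_{t\to0^+}t^{-1}\int_{\O^{\infty\infty}}(f\circ T_{-t}-f)\,\d\mu=0$. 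Summing the four contributions gives \eqref{greenform}.

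The main obstacle is the measure-theoretic bookkeeping underlying all three steps: one must make rigorous the parametrizations of the four flow-invariant strata, the fact that the boundary map $\kappa$ is measure-preserving, and the absence of pathologies on the boundary-free stratum — all of which rest on the disintegration of $\mu$ along the flow constructed in \cite{abl1}. Once that structural input is granted, the rest reduces to Fubini and the fundamental theorem of calculus for absolutely continuous functions, the only genuinely analytic point being the elementary fact that an absolutely continuous function on a half-line, or on $\mathbb{R}$, that is moreover integrable there must have vanishing limit(s) at infinity.
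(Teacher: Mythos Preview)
The paper does not actually prove this theorem: it is quoted verbatim from \cite[Theorem 3.1, Proposition 3.2, Corollary 2.1]{abl2}, the preliminaries section merely recording it for later use. So there is no in-paper argument to compare against. Your sketch is, however, essentially the route taken in \cite{abl1,abl2}: trace estimates by averaging the fundamental-theorem-of-calculus identity over $s\in(0,\min(\tau_+(\y),1))$ and integrating against the product decomposition $\mu_{|\O_-}=\mu_-\otimes\d s$; the equivalence $\B^-f\in\lm\Leftrightarrow\B^+f\in\lp$ via the measure-preserving endpoint map on finite-length characteristics together with automatic $L^1$-integrability of traces on $\Gamma_{\pm\infty}$; and Green's formula by summing the contributions of the four flow-invariant strata.

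One caution on the boundary-free stratum $\O^{\infty\infty}$. Your second alternative --- showing $\int_{\O^{\infty\infty}}g\,\d\mu=0$ by differentiating the constant $t\mapsto\int f\circ T_{-t}\,\d\mu$ --- needs $t^{-1}(f\circ T_{-t}-f)\to g$ in $L^1(\O^{\infty\infty})$, not just pointwise a.e.\ along characteristics, and this is not a free consequence of \eqref{fsharp} (the constant function $1$ need not lie in the dual if $\mu(\O^{\infty\infty})=\infty$). The disintegration route you mention first is the one actually carried out in \cite{abl1}: a transverse section with a transverse measure is constructed so that $\mu$ restricted to this stratum is again a product, and Fubini then turns the per-orbit identity $\int_{-\infty}^{\infty} g(\Phi(\x,s))\,\d s=0$ into the desired vanishing. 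With that structural input granted, the remainder of your argument is correct.
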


We are then in position to define the transport operator associated to a bounded boundary operator as follows:
\begin{definition}\label{defiH} For any bounded linear operator
$$H \in \mathscr{B}(L^1_+,L^1_-)$$
we define \emph{the transport operator} $(\T_H,\D(\T_H))$ \emph{associated to the boundary condition} $H$ as:
\begin{equation}\begin{split}\label{TH}
\D(\T_H)=\{f\in \D(\T_\mathrm{max})\;;\; &\B^+f \in L^1_+  \text{ and }    \B^-f = H\B^+f \},\\
\quad {\T}_H f &= \T_\mathrm{max} f \quad \forall f\in \D(\T_H).\end{split}
\end{equation}
\end{definition}

\subsection{Construction of the semigroup associated to boundary operator with unit norm}

\label{sec3}

We begin by introducing several notations, taken from \cite{abl2}.  For any $\lambda
> 0$ one defines the following operators
\begin{equation*}
\begin{cases}
M_\lambda \::\:&Y_- \longrightarrow Y_+\\
& u \longmapsto \left[M_{\lambda}u\right](\y)=u({\Phi}(\y,-\t_-(\y)))\exp\left(-\lambda
\t_-(\y)\right)\chi_{\{\t_-(\y) < \infty\}},\:\:\:\y \in \Gamma_+\;;
\end{cases}\end{equation*}
\begin{equation*}
\begin{cases}
\Xi_{\lambda} \::\:&Y_- \longrightarrow X\\
&u \longmapsto
\left[\Xi_{\lambda}u\right](\x)=u({\Phi}(\x,-\t_-(\x)))\exp\left(-\lambda
\t_-(\x)\right)\chi_{\{\t_-(\x)<\infty\}},\:\:\:\x \in {\O}\;;
\end{cases}
\end{equation*}
\begin{equation*}
\begin{cases}
G_{\lambda} \::\:&X \longrightarrow L^1_+\\
&f \longmapsto \left[G_{\lambda}f\right]({\z})=\displaystyle
\int_0^{\t_-({\z})}f({\Phi}({\z},-s))\exp(-\lambda s)\d s,\:\:\:{\z}
\in \Gamma_+\;;\end{cases}
\end{equation*}
and
\begin{equation*}
\begin{cases}
C_{\lambda} \::\:&X \longrightarrow X\\
&f \longmapsto \left[C_{\lambda}f\right](\x)=\displaystyle
\int_0^{\t_-(\x)}f({\Phi}(\x,-s))\exp(-\lambda s)\d s,\:\:\:\x \in
{\O}
\end{cases}
\end{equation*}
where $\chi_A$
denotes the characteristic function of a set $A$.
One has the following where $\T_0$ denotes the transport operator associated to the boundary operator $H \equiv 0$:
\begin{lemma}\label{lemmaML}
For any $\lambda >0$, the following hold:
\begin{enumerate}
\item $M_\lambda \in \mathscr{B}(Y_-,Y_+)$.
Moreover, given $u \in Y_-$,  $M_\lambda u \in L^1_+$ if and only if
$u \in L^1_-$.
\item $\Xi_\lambda \in \mathscr{B}(Y_-,X)$. Moreover, the range of $\Xi_\lambda$ is a subset of $\D(\T_\mathrm{max})$ with
\begin{equation}\label{propxil1}\T_\mathrm{max}\Xi_{\lambda}u=\lambda \Xi_\lambda u, \qquad
\B^-\Xi_\lambda u=u,\:\:\B^+\Xi_\lambda u = \ml u, \qquad  \forall u
\in Y_-
\end{equation}
\item $G_\lambda \in \mathscr{B}(X,L^1_+)$. Moreover, $G_\lambda$ is surjective.
\item $C_\lambda \in \mathscr{B}(X)$ with range included in $\D(\T_\mathrm{0})$. Moreover, $C_\lambda=(\lambda-\T_0)^{-1}$ and
$$G_{\lambda}f=\B^+C_{\lambda}f \quad
\text{ for any }  f \in X .$$
\end{enumerate}
\end{lemma}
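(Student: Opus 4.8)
The plan is to prove the four assertions by reducing every norm estimate and trace computation to a one-dimensional integral along the characteristic curves, the only non-elementary input being the product structure of $\mu$: by Proposition~2.10 of \cite{abl1}, $\mu$ restricted to $\O_-$ (resp. $\O_+$) is carried from the product of $\mu_\pm$ with Lebesgue measure by the map $(\y,t)\mapsto\Phi(\y,t)$, $\y\in\Gamma_-$, $0<t<\t_+(\y)$ (resp. $(\z,s)\mapsto\Phi(\z,-s)$, $\z\in\Gamma_+$, $0<s<\t_-(\z)$), while on the bi-infinite characteristics $\mu$ is flow-invariant. I shall also use the boundary map $S\colon\z\in\Gamma_-\setminus\Gamma_{-\infty}\mapsto\Phi(\z,\t_+(\z))\in\Gamma_+\setminus\Gamma_{+\infty}$, a bijection along which stay times are swapped and which, by consistency of the two product structures along a common characteristic, pushes $\mu_-$ forward onto $\mu_+$ off the infinite-stay-time sets.

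I would treat $\Xi_\lambda$ first, which also makes $M_\lambda$ well defined. Along the characteristic issued from $\y\in\Gamma_-$ the natural representative is $[\Xi_\lambda u]^\sharp(\Phi(\y,t))=u(\y)e^{-\lambda t}$ for $0<t<\t_+(\y)$; this is $C^1$ in $t$, so checking the condition of Definition~\ref{def:transp} gives $\Xi_\lambda u\in\D(\T_\mathrm{max})$ with $\T_\mathrm{max}\Xi_\lambda u=\lambda\Xi_\lambda u$, and letting $t\to0+$, respectively $t$ approach the right endpoint, yields $\B^-\Xi_\lambda u=u$ and $\B^+\Xi_\lambda u=\ml u$; this proves \eqref{propxil1} and shows $\ml u$ is a well-defined measurable function on $\Gamma_+$. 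Integrating $u(\y)e^{-\lambda t}$ in $t$ gives $\|\Xi_\lambda u\|_X=\lambda^{-1}\int_{\Gamma_-}|u(\y)|\bigl(1-e^{-\lambda\t_+(\y)}\bigr)\,\d\mu_-(\y)$, and the elementary bound $\lambda^{-1}(1-e^{-\lambda t})\le\max(1,\lambda^{-1})\min(t,1)$ shows this is $\le\max(1,\lambda^{-1})\|u\|_{Y_-}$, which is (2). Then (1) follows: $\ml=\B^+\Xi_\lambda$ and, since $\T_\mathrm{max}\Xi_\lambda u=\lambda\Xi_\lambda u$, Theorem~\ref{theotrace} gives $\|\ml u\|_{Y_+}\le(1+\lambda)\|\Xi_\lambda u\|_X$, so $\ml\in\mathscr{B}(Y_-,Y_+)$; while the change of variables $S$ gives $\|\ml u\|_{L^1_+}=\int_{\Gamma_-\setminus\Gamma_{-\infty}}|u(\z)|e^{-\lambda\t_+(\z)}\,\d\mu_-(\z)$, from which ``$u\in L^1_-\Rightarrow\ml u\in L^1_+$'' is immediate, and conversely one splits $\Gamma_-$ into $\{\t_+\ge1\}$ and $\{\t_+<1\}$: on the first $\min(\t_+,1)=1$ so $u\in Y_-$ forces $\mu_-$-integrability, and on the second $e^{-\lambda\t_+}\ge e^{-\lambda}$ so $\ml u\in L^1_+$ forces it — this is exactly where the hypothesis $u\in Y_-$ is used.

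For $C_\lambda$ and $G_\lambda$ I would argue similarly. The product structure gives at once $\|G_\lambda f\|_{L^1_+}\le\int_{\O_+}|f(\x)|e^{-\lambda\t_+(\x)}\,\d\mu(\x)\le\|f\|_X$ and, after a Fubini exchange of the two integrations along each characteristic (plus flow-invariance on the bi-infinite part), $\|C_\lambda f\|_X\le\lambda^{-1}\|f\|_X$; so $G_\lambda\in\mathscr{B}(X,L^1_+)$ and $C_\lambda\in\mathscr{B}(X)$. Writing the characteristic representative $[C_\lambda f]^\sharp(\Phi(\y,\theta))=e^{-\lambda\theta}\int_0^\theta f(\Phi(\y,r))e^{\lambda r}\,\d r$ and differentiating in $\theta$ gives $C_\lambda f\in\D(\T_\mathrm{max})$ with $\T_\mathrm{max}C_\lambda f=\lambda C_\lambda f-f$ and $\B^-C_\lambda f=0$, while letting $\theta$ approach the right endpoint gives $\B^+C_\lambda f=G_\lambda f$; with $G_\lambda f\in L^1_+$ this shows $C_\lambda f\in\D(\T_0)$ and $(\lambda-\T_0)C_\lambda f=f$. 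The identity $C_\lambda(\lambda-\T_0)g=g$ for $g\in\D(\T_0)$ comes from the integration by parts $\int_0^{\t_-(\x)}\bigl(\lambda g-\T_\mathrm{max}g\bigr)(\Phi(\x,-s))\,e^{-\lambda s}\,\d s=g^\sharp(\x)-g^\sharp(\Phi(\x,-\t_-(\x)))\,e^{-\lambda\t_-(\x)}$ (using $\tfrac{\d}{\d s}g^\sharp(\Phi(\x,-s))=(\T_\mathrm{max}g)(\Phi(\x,-s))$), together with the vanishing of the boundary term — for $\x\in\O_-$ the point $\Phi(\x,-\t_-(\x))\in\Gamma_-$, where $\B^-g=0$, and for $\x\in\O_{-\infty}$ the term is $\lim_{s\to\infty}g^\sharp(\Phi(\x,-s))e^{-\lambda s}$, which must vanish as otherwise $s\mapsto g^\sharp(\Phi(\x,-s))e^{-\lambda s}$ would fail to be integrable on $(0,\infty)$ along a.e.\ characteristic. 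Hence $C_\lambda=(\lambda-\T_0)^{-1}$. Finally, $G_\lambda$ is surjective: given $v\in L^1_+$ put $f(\x)=2\lambda\,e^{-\lambda\t_+(\x)}\bigl(1-e^{-2\lambda\,\t_-(\Phi(\x,\t_+(\x)))}\bigr)^{-1}v\bigl(\Phi(\x,\t_+(\x))\bigr)$ on $\O_+$ and $f=0$ elsewhere; a one-line computation gives $G_\lambda f=v$, and since $\int_0^{\t_-(\z)}|f(\Phi(\z,-s))|\,\d s=\tfrac{2}{1+e^{-\lambda\t_-(\z)}}|v(\z)|\le2|v(\z)|$ one gets $\|f\|_X\le2\|v\|_{L^1_+}$.

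The computations are elementary once the right coordinates are in place, so the obstacles are entirely matters of bookkeeping. First, keeping the sign convention of \eqref{fsharp} straight while differentiating representatives along characteristics. Second, handling the non-returning pieces $\O_{\pm\infty}$ and $\Gamma_{\pm\infty}$ (including the bi-infinite characteristics): this is where the change-of-variables identities of the first paragraph must be stated with care, where the equivalence $\ml u\in L^1_+\Leftrightarrow u\in L^1_-$ genuinely needs the $Y_-$-hypothesis, and where one must justify the vanishing at $s=\infty$ of the boundary term in the proof of $C_\lambda(\lambda-\T_0)g=g$. Third, the profile in the surjectivity argument must have exponential decay: the naive constant profile yields a preimage lying only in a $\t_-$-weighted space and not in $X$, so a small amount of design is needed there rather than a plain verification.
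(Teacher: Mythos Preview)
The paper does not actually prove Lemma~\ref{lemmaML}: it is stated in Section~2 as a preliminary result drawn from \cite{abl1,abl2}, with no proof given. So there is no ``paper's own proof'' to compare against here.

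Your argument is correct and is the natural direct verification in characteristic coordinates (which is, in essence, what the cited references do). A few minor remarks. First, in the left-inverse identity $C_\lambda(\lambda-\T_0)g=g$ on $\O_{-\infty}$, your justification for the vanishing of the boundary term at $s=\infty$ could be stated more carefully: the point is that the limit $\lim_{S\to\infty}g^\sharp(\Phi(\x,-S))e^{-\lambda S}$ \emph{exists} (from the finite-$S$ integration-by-parts, since $(\lambda-\T_0)g\in X$ is integrable along a.e.\ characteristic), and then a nonzero limit would force $|g^\sharp(\Phi(\x,-s))|$ to grow like $e^{\lambda s}$, contradicting the integrability of $g$ itself along a.e.\ (bi-)infinite characteristic. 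Your one-line formulation is a slight compression of this. Second, your explicit preimage for the surjectivity of $G_\lambda$ is cleaner than the one the paper uses later in the proof of Proposition~\ref{th3}\,(iv); both work, and your exponential profile avoids the $\tau_-$-weighting issue you correctly flag in the last paragraph. Third, the sign bookkeeping with Definition~\ref{def:transp} (so that $(\T_{\mathrm{max}}g)(\Phi(\x,-s))=\tfrac{\d}{\d s}g^\sharp(\Phi(\x,-s))$) is handled correctly.
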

Notice that, if $H \equiv 0$, it is not difficult to check that $(\T_0,\D(\T_0))$ is the generator of a $C_0$-semigroup $\uot$ given by
\begin{equation}\label{u0}
U_0(t)f(\x)=f({\Phi}(\x,-t))\chi_{\{t <
\tau_-(\x)\}}(\x), \qquad (\x \in {\O},\:f \in X).\end{equation}

In all the sequel, we shall assume that $H: L^1_+\rightarrow L^1_-$ is a positive boundary operator of unit norm, i.e.
\begin{equation}\label{hypp}
H \in \Be(\lp,\lm)\;;\,H f \geq 0 \quad \forall f \in \lp, f \geq 0\:; \quad
\;\|H\|_{\Be(\lp,\lm)}=\sup_{\|f\|_{\lp} =1} \|H f\|_{\lm}=1.
\end{equation}
Under such an assumption, for any $0 \leq r < 1$, the boundary operator $H_r:=rH$ is dissipative with
$$\|H_r\|_{\Be(\lp,\lm)}=r < 1;$$
it is then well-known \cite[Corollary 4.1]{abl2} that the transport operator $\T_{H_r}$ generates a positive contraction semigroup $(V_r(t))_{t\geq 0}$ for any $0 \leq r < 1$. From \cite[Theorem 6.2]{abl2}, one has the following:
\begin{theorem}\label{limiting} Let $H$ satisfy Assumption \ref{hypp}. Then, for any $t \geq 0$ and any $f \in X$ the
limit $V_H(t)f=\lim_{r \nearrow 1}V_{r}(t)f$ exists in $X$ and
defines a substochastic semigroup $\vt$. If $(\A,\D(\A))$ is the
generator of $\vt$, then its resolvent is given by \begin{equation}
\label{resoA}(\lambda-\A)^{-1}f=\lim_{r \nearrow 1}(\lambda-\T_{H_r})^{-1}f=C_{\lambda}f+\sum_{n=0}^\infty
\Xi_{\lambda}H(M_{\lambda}H)^{n}G_{\lambda}f \qquad \text{ for any }
f \in X, \:\lambda >0,\end{equation} where the series converges in
$X$. Moreover, $\A$ is an extension of $\T_H$; more precisely
$$\D(\T_H) \subset \D(\A) \subset \D(\T_\mathrm{max}) \quad \text{ with } \quad \A f=\T_\mathrm{max} f \qquad \forall f \in \D(\A)$$
and
$$\D(\T_H)=\left\{\varphi \in \D(\A)\,;\,\B^+\varphi \in \lp\right\}=\left\{\varphi \in \D(\A)\,;\,\B^-\varphi \in \lm\right\}=\D(\A) \cap \mathscr{W}$$
\end{theorem}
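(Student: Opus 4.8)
The plan is to prove the statement by combining the explicit boundary--perturbation formula for the resolvent of $\T_{H_r}$ with a monotone limiting procedure, and then to identify the limit generator using the closedness of $\T_\mathrm{max}$ and the trace theory of Section~2. \emph{Step 1 (resolvent of $\T_{H_r}$).} For $0\le r<1$ and $\l>0$ I would first record
\[
(\l-\T_{H_r})^{-1}g=\cl g+\sum_{n=0}^{\infty}\bl H_r(\ml H_r)^{n}\gl g=\cl g+\sum_{n=0}^{\infty}r^{\,n+1}\,\bl H(\ml H)^{n}\gl g,\qquad g\in X.
\]
Indeed, since $\cl=(\l-\T_0)^{-1}$ and $\T_\mathrm{max}\bl u=\l\bl u$ with $\B^-\bl u=u$, $\B^+\bl u=\ml u$, $\B^+\cl g=\gl g$ (Lemma~\ref{lemmaML} and \eqref{propxil1}), any $f\in\D(\T_{H_r})$ with $(\l-\T_\mathrm{max})f=g$ satisfies $f=\cl g+\bl\B^-f$; taking the outgoing trace and inserting $\B^-f=H_r\B^+f$ gives $(I-H_r\ml)\B^-f=H_r\gl g$, and $\|H_r\ml\|_{\Be(\lm)}\le r\|\ml\|_{\Be(\lm,\lp)}\le r<1$ permits Neumann inversion.

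\emph{Step 2 (the limit $r\nearrow1$).} All of $H,\ml,\bl,\gl,\cl$ are positive, so for $g\ge0$ the right--hand side above is nondecreasing in $r$; since $\|(\l-\T_{H_r})^{-1}\|_{\Be(X)}\le\l^{-1}$ uniformly in $r$, monotone convergence shows that $R_\l g:=\lim_{r\nearrow1}(\l-\T_{H_r})^{-1}g$ exists in $X$, equals $\cl g+\sum_{n\ge0}\bl H(\ml H)^{n}\gl g$ with the series convergent in $X$, and satisfies $\|R_\l^{k}\|\le\l^{-k}$; linearity extends this to all $g\in X$. Letting $r\nearrow1$ in the resolvent identity shows $(R_\l)_{\l>0}$ is a pseudo--resolvent. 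For injectivity and dense range I would use that $\mathscr{W}_0:=\{h\in\D(\T_\mathrm{max})\,;\,\B^\pm h=0\}$ is dense in $X$ (from the construction of \cite{abl1}) and is contained in every $\D(\T_{H_r})$ with $\T_{H_r}h=\T_\mathrm{max}h$ there, so $\|\l(\l-\T_{H_r})^{-1}h-h\|\le\l^{-1}\|\T_\mathrm{max}h\|\to0$ uniformly in $r$; hence $\l R_\l h\to h$ for $h\in\mathscr{W}_0$, and by density $\l R_\l g\to g$ for every $g\in X$. Consequently $R_\l=(\l-\A)^{-1}$ for the generator $\A$ of a positive contraction $C_0$--semigroup, and the Trotter--Kato theorem gives $V_r(t)g\to V_H(t)g$ in $X$; positivity and contractivity pass to the limit, so $\vt$ is substochastic. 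This yields the first three assertions and \eqref{resoA}.

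\emph{Step 3 (identification of $\A$ and of $\D(\T_H)$).} For any $\varphi=R_\l g\in\D(\A)$, the partial sums $\varphi_N:=\cl g+\sum_{n=0}^{N}\bl H(\ml H)^{n}\gl g$ lie in $\D(\T_\mathrm{max})$ with $\T_\mathrm{max}\varphi_N=\l\varphi_N-g$ (using $\T_\mathrm{max}\cl g=\l\cl g-g$ and $\T_\mathrm{max}\bl u=\l\bl u$), and $\varphi_N\to\varphi$, $\T_\mathrm{max}\varphi_N\to\l\varphi-g$ in $X$; since $\T_\mathrm{max}$ is closed, $\varphi\in\D(\T_\mathrm{max})$ and $\A\varphi=\T_\mathrm{max}\varphi$. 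From the trace estimate of Theorem~\ref{theotrace}, $\varphi_N\to\varphi$ in graph norm forces $\B^-\varphi_N=H\sum_{n\le N}(\ml H)^{n}\gl g\to\B^-\varphi$ in $Y_-$ and $\B^+\varphi_N=\sum_{n\le N+1}(\ml H)^{n}\gl g\to\B^+\varphi$ in $Y_+$; when moreover $\B^+\varphi\in\lp$, positivity of the summands and Lemma~\ref{lemmaML}(1) upgrade these to $L^1_\pm$--convergence (first for $g\ge0$, then by linearity), whence $\B^-\varphi=H\B^+\varphi$, i.e.\ $\varphi\in\D(\T_H)$. Conversely, for $f\in\D(\T_H)$ put $g=(\l-\T_\mathrm{max})f$; as in Step~1, $f=\cl g+\bl\B^-f$ and $(I-\ml H)\B^+f=\gl g$, so $\B^+f=\sum_{k\le n}(\ml H)^{k}\gl g+(\ml H)^{n+1}\B^+f$, and provided $(\ml H)^{n+1}\B^+f\to0$ in $\lp$ one gets $\B^+f=\sum_{k}(\ml H)^{k}\gl g$ in $\lp$, hence $f=\cl g+\bl\sum_k H(\ml H)^{k}\gl g=R_\l g\in\D(\A)$ with $\A f=\T_\mathrm{max}f$. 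Combined with the identity $\{f\in\D(\T_\mathrm{max})\,;\,\B^+f\in\lp\}=\{f\in\D(\T_\mathrm{max})\,;\,\B^-f\in\lm\}=\mathscr{W}$ of Theorem~\ref{theotrace}, this gives the inclusions $\D(\T_H)\subset\D(\A)\subset\D(\T_\mathrm{max})$ and all three descriptions of $\D(\T_H)$.

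\emph{Main obstacle.} The delicate point is the vanishing $(\ml H)^{n}\B^+f\to0$ in $\lp$ for $f\in\D(\T_H)$, equivalently that the full series $\sum_k(\ml H)^k\gl g$ already reproduces the outgoing trace $\B^+f$. This is precisely where the borderline hypothesis $\|H\|_{\Be(\lp,\lm)}=1$ bites: a nonzero fixed point $\ell=\ml H\ell$ of the positive $\lp$--contraction $\ml H$ would force equality throughout $\|\ell\|_{\lp}=\|\ml(H\ell)\|_{\lp}\le\|H\ell\|_{\lm}\le\|\ell\|_{\lp}$, and ruling this out requires the fine structure of $\ml$ and of the flow (which makes $\ml$ \emph{strictly} mass--decreasing along characteristics for $\l>0$) established in \cite{abl1,abl2}. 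This is the only step that genuinely distinguishes the unit--norm case from the strict--contraction setting of Step~1, and I expect it to be the hardest part of the argument.
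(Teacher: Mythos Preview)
The paper does not prove this theorem in the body of the text; it is imported verbatim from \cite[Theorem~6.2]{abl2}. So there is no internal proof to compare against, and the question becomes whether your outline stands on its own. Steps~1 and~2 are fine, and the first half of Step~3 (closedness of $\T_\mathrm{max}$ giving $\D(\A)\subset\D(\T_\mathrm{max})$ with $\A=\T_\mathrm{max}$ there) is the natural argument.

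The genuine gap is in the inclusion $\D(\T_H)\subset\D(\A)$. You reduce it to the claim $(\ml H)^{n}\psi\to0$ in $\lp$ for every $\psi\in\lp$ (this is what ``$(\ml H)^{n}\B^+f\to0$ for $f\in\D(\T_H)$'' amounts to, since $\B^+:\D(\T_H)\to\lp$ is onto), and then propose to obtain it by ruling out nonzero fixed points of $\ml H$. Ruling out fixed points is correct and easy (your argument, or equivalently Green's formula $\l\|\bl u\|_X=\|u\|_{\lm}-\|\ml u\|_{\lp}$, forces $\bl H\ell=0$ and hence $\ell=0$), but it does \emph{not} imply strong convergence $(\ml H)^n\to0$: for a positive $L^1$--contraction, absence of fixed points is strictly weaker. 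In fact the very paper you are working in shows this cannot hold in general: by Theorem~\ref{CNSH2}, $\lim_n\|(\ml H)^n\gl f\|_{\lp}=0$ for all $f$ is \emph{equivalent} to honesty of the whole semigroup, and Section~\ref{sec6} exhibits dishonest examples. Since $\gl$ is surjective onto $\lp$, your asserted convergence fails there. So the strategy you single out as the ``main obstacle'' is not merely delicate---it is false in the generality of the theorem.

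The fix is to bypass $(\ml H)^n\psi\to0$ altogether and use the approximants $(\l-\T_{H_r})^{-1}$ directly. For $f\in\D(\T_H)$ and $g=(\l-\T_\mathrm{max})f$, a short computation using $(I-\ml H)\B^+f=\gl g$ gives
\[
f-(\l-\T_{H_r})^{-1}g=(1-r)\sum_{k\ge0}r^{k}\,\bl H(\ml H)^{k}\B^+f.
\]
Green's formula yields the telescoping bound $\l\sum_{k\ge0}\|\bl H(\ml H)^{k}\psi\|_X\le\|\psi\|_{\lp}$ for $\psi\ge0$, so the right–hand side is $O(1-r)$ after splitting $\B^+f$ into positive and negative parts; hence $(\l-\T_{H_r})^{-1}g\to f$ and $f=R_\l g\in\D(\A)$. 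A smaller issue: in the converse direction your ``first for $g\ge0$, then by linearity'' does not go through as written, because the hypothesis $\B^+\varphi\in\lp$ does not split along $g=g_+-g_-$; one needs an extra argument (e.g.\ reduce to the case $\B^+\varphi=0$ by subtracting an element of $\D(\T_H)$ with the same outgoing trace).
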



\section{A new characterization of $(V_H(t))_{t \geq 0}$}

In this
section, we present a new characterization as well as practical expression of the semigroup $(V_H(t))_{t \geq 0}$. Indeed
in the following Theorem \ref{charaA}  we are able to prove that $(V_H(t))_{t \geq 0}$ is the smallest substochastic $C_0$-semigroup generated
by an extension of $\T_H$, while in Theorem \ref{theoDP} we show that $(V_H(t))_{t \geq 0}$  can be written as the sum of a strongly
convergent series. We first need to recall the definition of transport operator associated to an unbounded boundary operator. Precisely, let us introduce $\X$ as the space of elements
$(\psi_+,\psi_-) \in Y_+ \times Y_-$ such that $\psi_+-\ml \psi_-
\in L^1(\Gamma_+,\d\mu_+)$ for some/all $\lambda
>0.$ We equip $\X$ with the norm
\begin{equation*}\|(\psi_+,\psi_-)\|_{\X}:=\|\psi_+\|_{Y_+} + \|\psi_-\|_{Y_-}
+\|\psi_+-M_1 \psi_-\|_{\lp}\end{equation*} that makes it a Banach
space. Then, one has the following generalization of Definition \ref{defiH}:
\begin{definition}\label{def}
Given a possibly unbounded operator $\mathcal{K}$ from $Y_+$
to $Y_-$, we denote by $\D(\mathcal{K})$ its domain and $\G(\mathcal{K})$ its graph. If $\G(\K) \subset
\mathscr{E}$ we can define the transport operator $\T_{\K}$ associated to the boundary operator $\K$ by
$\T_\K f=\T_\mathrm{max}f$ for any $f \in \D(\T_\K)$, where
$$\D(\T_\K)=\bigg\{f \in \D(\T_\mathrm{max})\,;\,(\B^+f,\B^-f) \in
\G(\K)\bigg\}.$$
\end{definition}

We then have the following
\begin{lemma}\label{lem3} Let $\K$ be an unbounded operator as in  Definition \ref{def}.  For any $\lambda > 0$, the following are equivalent
\begin{enumerate}
\item $(I - M_{\lambda}\mathcal{K}): \mathscr{D}(\mathcal{K} )\mapsto L^1_+$ is bijective;
\item $(\lambda I - {\T}_{\mathcal{K}}):  \D(\T_\K)\mapsto X$ is bijective.
\end{enumerate}
 \end{lemma}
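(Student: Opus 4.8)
The plan is to establish the equivalence by a direct algebraic manipulation connecting the resolvent equation for $\T_\K$ to the invertibility of $I - M_\lambda\K$, using the representation formulas for $\Xi_\lambda$, $C_\lambda$ and $G_\lambda$ recorded in Lemma \ref{lemmaML}. The underlying idea is the standard fixed-point/boundary-reduction scheme: solving $(\lambda - \T_\K)f = g$ amounts to prescribing the incoming trace $\B^- f$, and the incoming trace must in turn be a fixed point of an operator built from $M_\lambda$, $\K$ and $G_\lambda$.

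First I would prove $(1)\Rightarrow(2)$. Fix $g \in X$ and look for $f \in \D(\T_\K)$ with $(\lambda - \T_\mathrm{max})f = g$. Any such $f$ must satisfy $f = C_\lambda g + \Xi_\lambda \psi_-$ with $\psi_- = \B^- f$: indeed, by Lemma \ref{lemmaML}(4) $C_\lambda g \in \D(\T_0)$ solves $(\lambda-\T_\mathrm{max})C_\lambda g = g$ with $\B^-C_\lambda g = 0$, while by \eqref{propxil1} $\Xi_\lambda \psi_-$ lies in the kernel of $\lambda - \T_\mathrm{max}$ and carries the incoming trace $\psi_-$; conversely any solution differs from $C_\lambda g$ by an element of $\ker(\lambda-\T_\mathrm{max})$, which is exactly the range of $\Xi_\lambda$ on that trace. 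Taking the outgoing trace and using $\B^+ C_\lambda g = G_\lambda g$ and $\B^+\Xi_\lambda\psi_- = M_\lambda\psi_-$ (again Lemma \ref{lemmaML}(4) and \eqref{propxil1}), the boundary condition $(\B^+f,\B^-f)\in\G(\K)$ becomes the requirement $\psi_- \in \D(\K)$ together with $\K(G_\lambda g + M_\lambda\psi_-) = \psi_-$, i.e.
\[
(I - \K M_\lambda)\,\psi_- = \K G_\lambda g \quad\text{on } \D(\K).
\]
Hypothesis (1) says $(I - M_\lambda\K):\D(\K)\to L^1_+$ is a bijection; one checks that this is equivalent to $(I-\K M_\lambda):\D(\K)\to\D(\K)$ being a bijection (multiply on the appropriate side and use that $\K G_\lambda g \in L^1_-$ while $M_\lambda$ maps into $L^1_+$), so a unique $\psi_-$ exists. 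Then $f := C_\lambda g + \Xi_\lambda\psi_- \in \D(\T_\K)$ solves the equation, giving surjectivity; injectivity follows because any $f$ in the kernel of $\lambda-\T_\K$ produces $\psi_- = \B^- f$ solving the homogeneous version $(I-\K M_\lambda)\psi_- = 0$, forcing $\psi_- = 0$ and hence (by \eqref{propxil1} and $C_\lambda 0 = 0$) $f = 0$.

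For $(2)\Rightarrow(1)$ I would run the argument in reverse: given $h \in L^1_+$, I must produce a unique $\phi \in \D(\K)$ with $(I - M_\lambda\K)\phi = h$. Using surjectivity of $G_\lambda$ (Lemma \ref{lemmaML}(3)) pick $g \in X$ with $G_\lambda g = $ (an appropriate preimage related to $h$); more directly, set $\psi_- := \K\phi$ as the unknown incoming trace and translate $(I-M_\lambda\K)\phi = h$ into a resolvent equation $(\lambda - \T_\K)f = g$ via the same dictionary, then invoke (2) to get $f$, and read off $\phi$ from $\B^+ f$. The bookkeeping between the two natural operators $I - M_\lambda\K$ (on $L^1_+$) and $I - \K M_\lambda$ (on $\D(\K)$) is the only delicate point: one has the intertwining $(I-M_\lambda\K)M_\lambda = M_\lambda(I-\K M_\lambda)$ and a dual relation, from which bijectivity of one is equivalent to bijectivity of the other, and I expect that this equivalence — making sure domains match up and that $M_\lambda$ restricted appropriately is a bijection onto the relevant subspace — is the main obstacle. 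Everything else is a direct substitution using the identities of Lemma \ref{lemmaML} and Theorem \ref{theotrace}.
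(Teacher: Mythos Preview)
Your boundary-reduction strategy is the right idea, but there is a real gap in how you set it up. You parametrize by $\psi_- = \B^- f$ and then split $\K(G_\lambda g + M_\lambda\psi_-)$ into $\K G_\lambda g + \K M_\lambda\psi_-$ to obtain the equation $(I - \K M_\lambda)\psi_- = \K G_\lambda g$. For a general unbounded $\K$ as in Definition~\ref{def} this is not justified: nothing guarantees that $G_\lambda g$ (which only lies in $L^1_+$) or $M_\lambda\psi_-$ belong to $\D(\K)$ separately; only their sum does, once the boundary condition is imposed. Your intertwining claim is also ill-posed as written: $\D(\K)$ is a subset of $Y_+$, while $I - \K M_\lambda$ naturally acts on a subset of $Y_-$, so the statement ``$(I-\K M_\lambda):\D(\K)\to\D(\K)$ is a bijection'' does not type-check, and the standard $M_\lambda\K$/$\K M_\lambda$ intertwining you appeal to breaks down because $M_\lambda$ is not a bijection between the relevant spaces.

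The fix is to parametrize by $\phi := \B^+ f \in \D(\K)$ instead. Then $\B^- f = \K\phi$ and $\B^+ f = G_\lambda g + M_\lambda\B^- f$ combine to give $(I - M_\lambda\K)\phi = G_\lambda g$ directly, with no splitting of $\K$; note this lands in $L^1_+$ precisely because $\G(\K)\subset\mathscr{E}$. Hypothesis~(1) then yields a unique $\phi$, and $f = C_\lambda g + \Xi_\lambda\K\phi$ is the desired solution. For $(2)\Rightarrow(1)$, surjectivity of $G_\lambda$ lets you write any $h\in L^1_+$ as $G_\lambda g$ and read $\phi = \B^+ f$ off the resolvent solution; injectivity comes from sending $\phi\in\ker(I - M_\lambda\K)$ to $f = \Xi_\lambda\K\phi \in \ker(\lambda - \T_\K)$. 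This corrected argument is essentially what the paper does, though more compactly: the paper cites \cite[Lemma~4.2]{abl2} for the surjectivity equivalence and then proves only the injectivity equivalence via the map $\phi\mapsto\Xi_\lambda\K\phi$ (and its inverse $f\mapsto\B^+f$), which avoids the bookkeeping you flagged as ``the main obstacle'' entirely.
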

\begin{proof} According to \cite[Lemma 4.2]{abl2}, for any $\lambda > 0$ one has $[I - M_{\lambda}\mathcal{K}]\mathscr{D}(\mathcal{K} )= L^1_+$ if and only if $[\lambda I - \T_\K]\mathscr{D}({T}_{\mathcal{K}})= X$. Therefore we have only to prove that, given $\lambda > 0$,  $(I - M_{\lambda}\mathcal{K}): \mathscr{D}(\mathcal{K} )\mapsto L^1_+$ is injective if and only if  $(\lambda I - \T_\K):  \mathscr{D}(\T_\K)\mapsto X$ is injective.

Assume now that $(\lambda I - \T_\K):  \mathscr{D}(\T_\K)\mapsto X$ is injective and let $\psi \in \D(K)$ be a solution to $(I - M_{\lambda}\mathcal{K})\psi=0.$ Set $f=\Xi_\lambda K \psi$. One deduces from  Lemma \ref{lemmaML} \textit{(2)} (with $u=K\psi$) that $f \in \D(\T_\mathrm{max})$ with $\T_\mathrm{max}f=\lambda f$, $\B^-f=K\psi$ and $\B^+f=M_\lambda K\psi=\psi$. In other words, $f \in \mathscr{D}(\T_\K)$ is a solution to the equation $ (\lambda - \T_\K)f = 0$ and therefore $f=0$. Since $\psi=\B^+ f$, one gets $\psi=0$ and $(I - M_{\lambda}\mathcal{K}): \mathscr{D}(\mathcal{K} )\mapsto L^1_+$ is injective. Conversely, assume $(I - M_{\lambda}\mathcal{K}): \mathscr{D}(\mathcal{K} )\mapsto L^1_+$ to be injective and let $f \in \D(\T_\K)$ be a solution to $(\lambda-\T_\K)f=0.$ According to  \cite[Theorem 3.2]{abl2} (see also Lemma \ref{lemmaML} \textit{(2)}),  $f\in \mathscr{D}(\T_\mathrm{max})$ with $\B^+f\in \mathscr{D}(\mathcal{K})$, and $f = \Xi_{\lambda}\mathcal{K}\B^+f$. Setting then $\psi=\B^+f$, one has $\psi \in \mathscr{D}(\mathcal{K})$ and  $(I - M_{\lambda}\mathcal{K})\psi = 0$. By assumption, $\psi=0$ and, since $f=\Xi_\lambda K \psi$, $f=0$ and $(\lambda I - \T_\K):  \mathscr{D}(\T_\K)\mapsto X$ is injective.  This proves the desired equivalence.
\end{proof}

With this in hands, one can prove the following which somehow characterizes the class of operators sharing the properties of the generator $\mathcal{A}$ (recall that, according to Theorem \ref{limiting}, $\mathcal{A}$ satisfies the following properties \textit{(a)--(c)}):

\begin{proposition}\label{th3}
Let $\mathcal{A}_0$ be the generator of a strongly continuous substochastic semigroup $(V(t))_{t\geq 0}$ in $X$. Assume further that
\begin{enumerate}[(a)\:]
\item $\mathscr{D}({\T}_{H})\subseteq\mathscr{D}(\mathcal{A}_0)\subseteq\mathscr{D}({\T}_{\mathrm{max}})$

\item $\mathcal{A}_0 f = {\T}_\mathrm{max} f $ for any $f \in \mathscr{D}(\mathcal{A}_0)$

\item $\mathscr{D}({\T}_{H}) = \{f \in \mathscr{D}(\mathcal{A}_0) : \B^+f \in L^1_+\} = \{f \in \mathscr{D}(\mathcal{A}_0): \B^-f \in L^1_-\}$.
\end{enumerate}
Then there exists a boundary linear operator $\mathcal{H}_0$ from $Y_+$ to $Y_-$ with the following properties:

\begin{enumerate}[(i)\:]
\item $L^1_+ = \mathscr{D}(H) \subseteq \mathscr{D}(\mathcal{H}_0)$ with $\mathcal{H}_0\psi = H\psi$ for any $\psi \in L^1_+$

\item $\mathcal{A}_0 = {\T}_{\mathcal{H}_0}$

\item for any $\lambda > 0$ the mapping $(I - M_{\lambda}\mathcal{H}_0): \mathscr{D}(\mathcal{H}_0 )\mapsto L^1_+$ is bijective, and

\begin{equation}
(\lambda - \mathcal{A}_0)^{-1}f =  C_{\lambda}f + \Xi_{\lambda}\mathcal{H}_0(I - M_{\lambda}\mathcal{H}_0)^{-1}G_{\lambda}f.
\label{res2}
\end{equation}

\item for any $\lambda > 0$, $u\in L^1_+$, $u\geq 0$ one has

\begin{equation}
(I - M_{\lambda}\mathcal{H}_0)^{-1}u\geq 0 \quad \quad \mathcal{H}_0(I - M_{\lambda}\mathcal{H}_0)^{-1}u \geq 0
\label{res4}
\end{equation}
\end{enumerate}
\end{proposition}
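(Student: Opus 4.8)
The plan is to reconstruct the boundary operator $\mathcal{H}_0$ directly from the trace maps of elements of $\mathscr{D}(\mathcal{A}_0)$, and then to identify it with $\T_{\mathcal{H}_0}$ using the characterization of $\D(\T_{\mathcal{K}})$ from Definition \ref{def}. First I would define $\mathcal{H}_0$ by declaring $\psi_+ \in \mathscr{D}(\mathcal{H}_0)$ precisely when there is $f \in \mathscr{D}(\mathcal{A}_0)$ with $\B^+ f = \psi_+$, and setting $\mathcal{H}_0 \psi_+ := \B^- f$. One must check this is well defined: if $f_1, f_2 \in \mathscr{D}(\mathcal{A}_0)$ share the same outgoing trace, then $f_1 - f_2 \in \mathscr{D}(\mathcal{A}_0)$ has $\B^+(f_1-f_2) = 0 \in L^1_+$, so by hypothesis (c) it lies in $\mathscr{D}(\T_H)$, whence $\B^-(f_1-f_2) = H \B^+(f_1-f_2) = 0$; so $\B^- f$ is indeed a function of $\B^+ f$ alone, and linearity of $\mathcal{H}_0$ follows from that of $\B^\pm$. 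That $\G(\mathcal{H}_0) \subset \mathscr{E}$ is immediate from Theorem \ref{theotrace} (traces of $\D(\T_\mathrm{max})$ elements lie in $Y_\pm$, and $\B^+ f - M_\lambda \B^- f$ behaves correctly — this is already encoded in $\mathscr{W}$ and the structure of $\mathscr{E}$). Property (i) is then exactly hypothesis (c) read backwards: if $\psi \in L^1_+ = \mathscr{D}(H)$, pick any $f \in \mathscr{D}(\T_H)$ with $\B^+ f = \psi$ (possible since, e.g., $\Xi_\lambda H\psi$ works by Lemma \ref{lemmaML}(2) once one notes $M_\lambda H \psi \in L^1_+$); then $f \in \mathscr{D}(\mathcal{A}_0)$ by (a), so $\psi \in \mathscr{D}(\mathcal{H}_0)$ and $\mathcal{H}_0\psi = \B^- f = H\B^+ f = H\psi$.

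For (ii), I would show $\mathscr{D}(\mathcal{A}_0) = \mathscr{D}(\T_{\mathcal{H}_0})$ with matching action. The inclusion $\subseteq$ is built into the definition of $\mathcal{H}_0$: if $f \in \mathscr{D}(\mathcal{A}_0) \subseteq \mathscr{D}(\T_\mathrm{max})$ then $(\B^+ f, \B^- f) \in \G(\mathcal{H}_0)$ by construction, so $f \in \mathscr{D}(\T_{\mathcal{H}_0})$, and $\mathcal{A}_0 f = \T_\mathrm{max} f = \T_{\mathcal{H}_0} f$ by (b). For the reverse inclusion one takes $f \in \mathscr{D}(\T_{\mathcal{H}_0})$, so $(\B^+ f, \B^- f) \in \G(\mathcal{H}_0)$, meaning there is some $g \in \mathscr{D}(\mathcal{A}_0)$ with $\B^+ g = \B^+ f$ and $\B^- g = \B^- f$; then $f - g \in \mathscr{D}(\T_\mathrm{max})$ has vanishing traces, so $f - g \in \mathscr{D}(\T_0) \subseteq \mathscr{D}(\T_H) \subseteq \mathscr{D}(\mathcal{A}_0)$, hence $f = g + (f-g) \in \mathscr{D}(\mathcal{A}_0)$. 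This is the step I expect to be the main obstacle — not conceptually hard, but one must be careful that "same traces $\Rightarrow$ difference in $\mathscr{D}(\T_0)$" really holds at the level of the trace spaces $Y_\pm$ (not just $L^1_\pm$), which is where the fine trace theory of \cite{abl1,abl2} and the definition of $\mathscr{E}$ do the work; I would cite \cite[Theorem 3.2]{abl2} or the analogue of Lemma \ref{lem3}'s internal argument to pin this down.

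For (iii): once $\mathcal{A}_0 = \T_{\mathcal{H}_0}$, the fact that $\mathcal{A}_0$ generates a $C_0$-semigroup makes $(\lambda - \mathcal{A}_0) = (\lambda - \T_{\mathcal{H}_0})$ bijective for all $\lambda > 0$, and Lemma \ref{lem3} immediately gives that $(I - M_\lambda \mathcal{H}_0): \mathscr{D}(\mathcal{H}_0) \to L^1_+$ is bijective. The resolvent formula \eqref{res2} is then obtained by the standard computation: for $f \in X$ and $\lambda > 0$, set $\varphi = (\lambda - \mathcal{A}_0)^{-1} f \in \mathscr{D}(\T_{\mathcal{H}_0})$, so $(\lambda - \T_\mathrm{max})\varphi = f$; by Lemma \ref{lemmaML}(4), $\varphi - C_\lambda f \in \mathscr{D}(\T_\mathrm{max})$ is a $\lambda$-eigenfunction of $\T_\mathrm{max}$ with $\B^-(\varphi - C_\lambda f) = \B^- \varphi$ and $\B^+(\varphi - C_\lambda f) = \B^+\varphi - G_\lambda f$, so by Lemma \ref{lemmaML}(2) one has $\varphi - C_\lambda f = \Xi_\lambda \B^-\varphi = \Xi_\lambda \mathcal{H}_0 \B^+\varphi$, while applying $\B^+$ and using $\B^+ \Xi_\lambda u = M_\lambda u$ gives $\B^+\varphi - G_\lambda f = M_\lambda \mathcal{H}_0 \B^+\varphi$, i.e. $(I - M_\lambda \mathcal{H}_0)\B^+\varphi = G_\lambda f$, hence $\B^+\varphi = (I - M_\lambda \mathcal{H}_0)^{-1} G_\lambda f$; substituting back yields \eqref{res2}. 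Finally (iv) is positivity: since $(V(t))_{t\geq 0}$ is substochastic, $(\lambda - \mathcal{A}_0)^{-1}$ is a positive operator, so for $u = G_\lambda f$ with $f \geq 0$ — and $G_\lambda$ is surjective onto $L^1_+$ with $G_\lambda(X_+) $ dense (indeed all of $L^1_+$ is reached by nonnegative $f$, since $G_\lambda$ maps $X_+$ onto $L^1_{+,+}$) — one reads off from \eqref{res2} applied to such $f$ that both $\Xi_\lambda \mathcal{H}_0 (I - M_\lambda \mathcal{H}_0)^{-1} u = (\lambda - \mathcal{A}_0)^{-1} f - C_\lambda f$ and, taking traces, $\mathcal{H}_0(I - M_\lambda \mathcal{H}_0)^{-1} u = \B^- \varphi$ are nonnegative; the nonnegativity of $(I - M_\lambda \mathcal{H}_0)^{-1} u = \B^+\varphi$ follows the same way. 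One then passes from nonnegative $u$ of the form $G_\lambda f$ to all nonnegative $u \in L^1_+$ by the surjectivity of $G_\lambda$ restricted to $X_+$ (every nonnegative element of $L^1_+$ is $G_\lambda f$ for some $f \geq 0$, which is checked directly from the explicit formula for $G_\lambda$). I would close with a remark that $\mathcal{H}_0 \supseteq H$ together with (iii)–(iv) means $\mathcal{A}_0$ has exactly the structural features of $\mathcal{A}$ from Theorem \ref{limiting}, setting up the minimality comparison in the next result.
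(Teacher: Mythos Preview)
Your overall strategy matches the paper's: define $\mathcal{H}_0$ from the traces of elements of $\mathscr{D}(\mathcal{A}_0)$, then verify (i)--(iv) via the resolvent structure and Lemma~\ref{lem3}. The well-definedness argument for $\mathcal{H}_0$, and parts (iii)--(iv), are essentially the paper's. Two steps, however, need repair.

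In (i), your proposed element $f = \Xi_\lambda H\psi$ does not work: by Lemma~\ref{lemmaML}(2) one has $\B^+ f = M_\lambda H\psi$, not $\psi$, and moreover $\B^- f = H\psi \neq H\B^+ f = H M_\lambda H\psi$ in general, so $f$ need not even lie in $\mathscr{D}(\T_H)$. What is needed is an $f \in \mathscr{D}(\T_H)$ with $\B^+ f = \psi$, and the paper constructs one: take $f = C_\lambda g + \Xi_\lambda H\psi$ where $g \in X$ is chosen (via surjectivity of $G_\lambda$, Lemma~\ref{lemmaML}(3)) so that $G_\lambda g = (I - M_\lambda H)\psi$; then $\B^+ f = G_\lambda g + M_\lambda H\psi = \psi$ and $\B^- f = H\psi = H\B^+ f$.

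In (ii), the inclusion $\mathscr{D}(\T_0) \subseteq \mathscr{D}(\T_H)$ is false in general: an $h$ with $\B^- h = 0$ but $\B^+ h \neq 0$ lies in $\mathscr{D}(\T_0)$ yet not in $\mathscr{D}(\T_H)$ unless $H\B^+ h = 0$. Your conclusion survives because the specific element $f - g$ has $\B^+(f-g) = 0$, so $\B^-(f-g) = 0 = H\B^+(f-g)$ places it directly in $\mathscr{D}(\T_H) \subseteq \mathscr{D}(\mathcal{A}_0)$; just drop the intermediate claim. (The paper handles this step by citing \cite[Corollary~3.1]{abl2} to conclude $f - g = 0$; your route avoids that citation.) Finally, in (iii) the identification $\varphi - C_\lambda f = \Xi_\lambda \B^-\varphi$ requires the \emph{converse} of Lemma~\ref{lemmaML}(2), namely that every $\lambda$-eigenfunction of $\T_\mathrm{max}$ equals $\Xi_\lambda$ of its incoming trace; this is \cite[Theorem~3.2]{abl2}, which the paper also invokes.
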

\begin{proof} First of all observe that the trace mapping $\B^+:  \mathscr{D}(\mathcal{A}_0) \to Y_+$ is injective.  Indeed let $f\in \mathscr{D}(\mathcal{A}_0) $ be such that $\B^+f = 0$. Then assumption \textit{(c)} ensures that $f \in \mathscr{D}(\T_H)$, so that $\B^-f = H\B^+f = 0$.
In particular, $\|(\B^+f,\B^-f)\|_{\X} = 0$ and one deduces from  \cite[Corollary 3.1]{abl2} that  $f = 0$. Let us now introduce the set
$$E_0 :=\mathrm{Range}(\B^+\vert_{\mathscr{D}(\mathcal{A}_0)})=\{\psi\in Y_+: \exists g\in \mathscr{D}(\mathcal{A}_0) \quad \text{such that } \quad \psi = \B^+g\}$$ so that
$\B^+:  \mathscr{D}(\mathcal{A}_0) \to E_0 \subseteq Y_+$ is bijective. This allows to define an unbounded  linear boundary operator $\mathcal{H}_0: \D(\mathcal{H}_0) \to Y_-$ as follows:
\begin{equation*}
\D(\mathcal{H}_0)=E_0 \qquad \text{ and } \qquad
\mathcal{H}_0 \psi = \B^-g \qquad \forall \psi \in E_0\end{equation*}
where $g$ is the unique element of $\mathscr{D}(\mathcal{A}_0)$ such that $\B^+g = \psi$. Let us prove that $\mathcal{H}_0$ satisfies points $(i)-(iv)$.

\noindent \textit{(i)} Let $h\in L^1_+$ and $\lambda > 0$ be given. Setting $u = (I - M_{\lambda}H)h \in L^1_+$, by Lemma \ref{lemmaML} \textit{(3)}, there exists $g\in X$ such that $G_{\lambda}g = u$. Setting then$f = C_{\lambda}g + \Xi_{\lambda}Hh$ one clearly has $f \in \mathscr{D}(\T_\mathrm{max})$. Moreover $\B^+f = G_{\lambda}g + M_{\lambda}Hh = u + M_{\lambda}Hh = h$  and $\B^-f = Hh = H\B^+f $.  In other words, $f \in \mathscr{D}({T}_{H})\subseteq\mathscr{D}(\mathcal{A}_0)$. Consequently,  $h\in E_0$ with $\mathcal{H}_0h = Hh$ and \textit{(i)} is proved.

\noindent \textit{(ii)} To prove point \textit{(ii)}, it is enough to show  that $\D(\A_0)=\D(\T_{\mathcal{H}_0}).$ From the definition of $\mathcal{H}_0$ and the assumption $\D(\mathcal{A}_0) \subseteq \D(\T_\mathrm{max})$, one sees that
$$\D(\A_0) \subseteq \{f \in \D(\T_\mathrm{max})\;;\,\B^+ f \in \D(\mathcal{H}_0) \:,\:\B^-f =\mathcal{H}_0 \B^+ f\}=\D(\T_{\mathcal{H}_0}).$$
Conversely, let $f \in \D(\T_\mathrm{max})$ with $\B^+ f \in \D(\mathcal{H}_0)$ and $\B^-f =\mathcal{H}_0 \B^+ f$. By definition  of $\mathcal{H}_0$ and since $\D(\mathcal{H}_0)=E_0$, there exists $g\in \D(\A_0)$ such that $\B^+g = \B^+f$ and $\B^-g =\mathcal{H}_0 \B^+f=\B^-f$. Set $h = f - g$. One has $h\in \D(\T_\mathrm{max})$ with  $\B^+h =  \B^-h = 0$ and again, we can invoke \cite[Corollary 3.1]{abl2} to state that $h=0$, i.e. $f = g  \in \D(\mathcal{A}_0)$, proving the second inclusion.

\noindent \textit{(iii)} Since $\mathcal{A}_0$ is the generator of a substochastic semigroup we can state that for any $\lambda > 0$ and $f\in X$ there exists a unique $g \in \mathscr{D}(\mathcal{A}_0)$ such that $(\lambda - \mathcal{A}_0)g =f$, with moreover $g\geq 0$ if $f\geq 0$. This means that for any $\lambda > 0$ and $f\in X$ there exists a unique $g \in \D(\T_\mathrm{max})$, such that $\B^+g\in \D(\mathcal{H}_0)$ with $g$ solution to the boundary value problem:
\begin{equation}
(\lambda - \T_\mathrm{max}) g = f \quad \quad \B^-g = \mathcal{H}_0\B^+g.
\label{bvp}
\end{equation}
From \cite[Theorem 3.2]{abl2}, such a solution $g$ is given by
\begin{equation}
g = C_{\lambda}f + \Xi_{\lambda}\B^-g = C_{\lambda}f + \Xi_{\lambda}\mathcal{H}_0\B^+g,
\label{sobvp}
\end{equation}
and, in particular, $u:=\B^+g\in \mathscr{D}(\mathcal{H}_0)$ satisfies $(I - M_{\lambda}\mathcal{H}_0)u = G_{\lambda}f.$ Since  $(\lambda I - {\T}_{\mathcal{H}_0}):  \D(\T_{\mathcal{H}_0})\to X$ is bijective, one deduces from Lemma \ref{lem3} that  $(I - M_{\lambda}\mathcal{H}_0): \mathscr{D}(\mathcal{H}_0 ) \to L^1_+$ is bijective. Then, $u=\B^+g = (I - M_{\lambda}\mathcal{H}_0)^{-1}G_{\lambda}f$ which, from \eqref{sobvp}, shows that the solution to \eqref {bvp} becomes
$$g= C_{\lambda}f + \Xi_{\lambda}\mathcal{H}_0(I - M_{\lambda}\mathcal{H}_0)^{-1}G_{\lambda}f$$
which is nothing but \eqref{res2}.

\noindent \textit{(iv)} Let now  $\lambda > 0$ and $u\in L^1_+$ with $u\geq 0$ be given. Consider then the function $g_{\lambda}$ defined  as follows:
\begin{equation*}
g_{\lambda}(\x) = \begin{cases}
\dfrac{(1+\lambda)\tau_-(\x) + 1}{\tau_-(\x) + \tau_+(\x)}\exp(-\tau_+(\x))\,u(\Phi(\x,\tau_+(\x)) \qquad &\text{ if } \quad \tau_-(\x) + \tau_+(\x) < \infty,\\
(1+\lambda)\exp(-\tau_+(\x)) u(\Phi(\x,\tau_+(\x))  \qquad &\text{ if } \quad  \tau_-(\x)=\infty \quad \text{ and } \tau_+(\x) < \infty,\\
0 \qquad &\text{ if } \quad \tau_+(\x)=\infty.
\end{cases}
\end{equation*}
One can check easily that $g_\lambda \in X$, $g_\lambda \geq 0$ with $G_{\lambda}g_{\lambda} = u$.  Setting now
$$f_{\lambda} = (\lambda - \mathcal{A}_0)^{-1}g_{\lambda} =  C_{\lambda}g_{\lambda}+ \Xi_{\lambda}\mathcal{H}_0(I - M_{\lambda}\mathcal{H}_0)^{-1}G_{\lambda}g_{\lambda}$$
one sees that $f_\lambda$ is nonnegative, with $\B^+f_{\lambda} = (I - M_{\lambda}\mathcal{H}_0)^{-1}G_{\lambda}g_{\lambda} = (I - M_{\lambda}\mathcal{H}_0)^{-1}u \geq 0$; $\B^- f_{\lambda} =\mathcal{H}_0(I - M_{\lambda}\mathcal{H}_0)^{-1}G_{\lambda}g_{\lambda} = \mathcal{H}_0(I - M_{\lambda}\mathcal{H}_0)^{-1}u \geq 0$ which proves the result.\end{proof}

The above Proposition allows to prove that $(V_H(t))_{t\geq 0}$ is the \emph{smallest}  substochastic semigroup generated by an extension of $\T_H$. More precisely we have
\begin{theorem}\label{charaA}
Let $(V (t))_{t\geq 0}$  be a strongly continuous substochastic semigroup in $X$ with generator $\mathcal{A}_0$  which satisfies the conditions \text{(a)-(c)} of Proposition \ref{th3}. Then, for any $t\geq 0$ one has $V(t)\geq V_H(t)$, i.e. $V (t)f \geq V_H(t)f$ for any \emph{nonnegative} $f \in X$. In other words, $(V_H(t))_{t \geq 0}$ is the \emph{smallest substochastic semigroup generated by an extension of $\T_H$}.
\label{Vext}
\end{theorem}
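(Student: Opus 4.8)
The plan is to exploit the resolvent representation \eqref{res2} obtained in Proposition \ref{th3} together with the known resolvent formula \eqref{resoA} for $\mathcal{A}$, and then pass from resolvents to semigroups by means of a Hille--Yosida / exponential-formula comparison argument. Concretely, write $\mathcal{H}_0$ for the boundary operator associated to $\mathcal{A}_0$ by Proposition \ref{th3}. The first step is to show that, for every $\lambda>0$ and every $u\in L^1_+$ with $u\ge0$, one has the pointwise (lattice) inequality
\begin{equation*}
(I-M_\lambda\mathcal{H}_0)^{-1}u \;\ge\; \sum_{n=0}^\infty (M_\lambda H)^n u,
\end{equation*}
where the right-hand side is exactly the Neumann-type series appearing in \eqref{resoA} (recall that, by Theorem \ref{limiting}, this series converges and represents the part of $(\lambda-\mathcal{A})^{-1}$ coming from the boundary). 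To get this, I would use point \textit{(i)} of Proposition \ref{th3} ($\mathcal{H}_0$ extends $H$), the positivity statement \eqref{res4}, and the algebraic identity $(I-M_\lambda\mathcal{H}_0)^{-1}=\sum_{k=0}^{m}(M_\lambda\mathcal{H}_0)^k + (M_\lambda\mathcal{H}_0)^{m+1}(I-M_\lambda\mathcal{H}_0)^{-1}$: applied to $u\ge0$, every term $(M_\lambda\mathcal{H}_0)^k u = (M_\lambda H)^k u$ since the iterates stay in $L^1_+$ where $\mathcal{H}_0=H$ (here one uses that $M_\lambda$ maps $L^1_-$ into $L^1_+$, Lemma \ref{lemmaML}\textit{(1)}, and $H$ maps $L^1_+$ into $L^1_-$), while the remainder term is nonnegative by \eqref{res4}. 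Letting $m\to\infty$ yields the claimed domination.

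The second step is to feed this into the resolvent formulas. Applying $\Xi_\lambda\mathcal{H}_0(\cdot)$ (respectively $\Xi_\lambda H(\cdot)$) to both sides and adding $C_\lambda f$, and using the positivity of $C_\lambda$, $\Xi_\lambda$ together with $\mathcal{H}_0(I-M_\lambda\mathcal{H}_0)^{-1}u\ge0$ from \eqref{res4}, I obtain for every nonnegative $f\in X$
\begin{equation*}
(\lambda-\mathcal{A}_0)^{-1}f \;\ge\; C_\lambda f + \sum_{n=0}^\infty \Xi_\lambda H(M_\lambda H)^n G_\lambda f \;=\; (\lambda-\mathcal{A})^{-1}f,
\end{equation*}
the last equality being \eqref{resoA}. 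Thus $(\lambda-\mathcal{A}_0)^{-1}\ge(\lambda-\mathcal{A})^{-1}$ on the positive cone, for all $\lambda>0$. Iterating, $(\lambda-\mathcal{A}_0)^{-n}\ge(\lambda-\mathcal{A})^{-n}$ for all $n\ge1$ (the ordering of positive operators is preserved under composition since each factor is positivity-preserving). The third step is the exponential formula: for $f\ge0$,
\begin{equation*}
V(t)f=\lim_{n\to\infty}\Big(\tfrac{n}{t}\Big)^n\big(\tfrac{n}{t}-\mathcal{A}_0\big)^{-n}f \;\ge\; \lim_{n\to\infty}\Big(\tfrac{n}{t}\Big)^n\big(\tfrac{n}{t}-\mathcal{A}\big)^{-n}f = V_H(t)f,
\end{equation*}
which gives $V(t)\ge V_H(t)$ for all $t\ge0$, as desired; the final assertion about $(V_H(t))_{t\ge0}$ being the smallest such semigroup is then immediate since $(V_H(t))_{t\ge0}$ itself satisfies \text{(a)--(c)} by Theorem \ref{limiting}.

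I expect the main obstacle to be the rigorous justification of the first step: namely that the iterates of $M_\lambda\mathcal{H}_0$ applied to a nonnegative $u\in L^1_+$ genuinely coincide with the iterates of $M_\lambda H$ — one must argue inductively that $(M_\lambda H)^n u\in L^1_+$ (so that $\mathcal{H}_0$ can again be applied and equals $H$ there by \textit{(i)}), and that the remainder term $(M_\lambda\mathcal{H}_0)^{m+1}(I-M_\lambda\mathcal{H}_0)^{-1}u$ is well-defined and nonnegative, which requires knowing that $(I-M_\lambda\mathcal{H}_0)^{-1}u\in\mathscr{D}(\mathcal{H}_0)$ with $\mathcal{H}_0(I-M_\lambda\mathcal{H}_0)^{-1}u\in Y_-$ and that $M_\lambda$ preserves positivity on $Y_-$ — all of which follow from Lemma \ref{lemmaML}, point \textit{(iv)} of Proposition \ref{th3}, and the structure of $\mathscr{E}$, but need to be assembled carefully. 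A secondary (purely bookkeeping) point is to make sure the strong convergence in the exponential formula is compatible with taking lattice limits, which is standard in $L^1$ since the positive cone is closed.
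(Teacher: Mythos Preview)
Your approach is correct and gives a genuinely different route to the resolvent comparison than the paper's. The paper does \emph{not} expand $(I-M_\lambda\mathcal{H}_0)^{-1}$ as a Neumann series; instead it compares $(\lambda-\mathcal{A}_0)^{-1}$ with the resolvent of the \emph{strictly contractive} problem $\T_{H_r}$ ($H_r=rH$, $0<r<1$) via the exact algebraic identity
\[
\mathcal{H}_0(I-M_\lambda\mathcal{H}_0)^{-1}-H_r(I-M_\lambda H_r)^{-1}
=(1-r)\bigl[\mathcal{H}_0(I-M_\lambda\mathcal{H}_0)^{-1}M_\lambda H+H\bigr](I-M_\lambda H_r)^{-1},
\]
which is manifestly nonnegative on the positive cone by \eqref{res4}, and then lets $r\nearrow 1$ using \eqref{resoA}. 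The final passage to semigroups via the exponential formula is identical to yours. Your direct Neumann argument avoids the auxiliary family $(V_r(t))_{t\ge0}$ altogether, which is conceptually cleaner; the paper's detour through $H_r$ has the advantage that each term in the difference is an honest bounded operator on $L^1_+$, so no domain bookkeeping is needed.

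One point in your outline deserves tightening. In step~2 you write ``applying $\Xi_\lambda\mathcal{H}_0$ (respectively $\Xi_\lambda H$) to both sides'': since $\mathcal{H}_0$ is \emph{not} known to be order-preserving on all of $\mathscr{D}(\mathcal{H}_0)$, you cannot simply apply it to the inequality from step~1. What actually works is to apply $\mathcal{H}_0$ to the \emph{identity}
\[
(I-M_\lambda\mathcal{H}_0)^{-1}u=\sum_{k=0}^{m}(M_\lambda H)^k u+R_{m+1},\qquad R_{m+1}:=(M_\lambda\mathcal{H}_0)^{m+1}(I-M_\lambda\mathcal{H}_0)^{-1}u,
\]
and observe that $R_{m+1}=(I-M_\lambda\mathcal{H}_0)^{-1}\bigl[(M_\lambda H)^{m+1}u\bigr]$ (this commutation is what your induction really establishes). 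Since $(M_\lambda H)^{m+1}u\in L^1_+$ is nonnegative, \eqref{res4} gives \emph{both} $R_{m+1}\ge0$ \emph{and} $\mathcal{H}_0 R_{m+1}\ge0$; the second inequality is what you need, and it does not follow from mere nonnegativity of $R_{m+1}$. With this in hand, $\mathcal{H}_0(I-M_\lambda\mathcal{H}_0)^{-1}u\ge\sum_{k=0}^{m}H(M_\lambda H)^k u$, and applying the positive operator $\Xi_\lambda$, adding $C_\lambda f$, and letting $m\to\infty$ finishes as you say.
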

\begin{proof} According to the previous Proposition \ref{th3}, there exists an extension $\mathcal{H}_0$ of $H$   so that the generator $\mathcal{A}_0$  of the semigroup $(V(t))_{t\geq 0}$ coincides with the transport operator ${T}_{\mathcal{H}_0}$, and formula \eqref{res2} holds. Now, since $\mathcal{H}_0h = H h$ for any $h\in L^1_+$, we have, for $0 < r < 1$ and $H_r=rH$:
\begin{multline*}
(I - M_{\lambda}\mathcal{H}_0)^{-1} - (I - M_{\lambda}H_r )^{-1} = \big[(I - M_{\lambda}\mathcal{H}_0)^{-1}(I - M_{\lambda}H_r) - I\big](I - M_{\lambda}H_r)^{-1} \\
= (I - M_{\lambda}\mathcal{H}_0)^{-1}(I - M_{\lambda}H_r  - I + M_{\lambda}\mathcal{H}_0)(I - M_{\lambda}H_r )^{-1} \\
= (1 - r)(I - M_{\lambda}\mathcal{H}_0)^{-1}M_{\lambda}H(I - M_{\lambda}H_r )^{-1}
\end{multline*}
where we used that the range of $(I-M_\lambda H_r)^{-1}$ is $\lp$. One deduces easily from this that
\begin{multline*}
\mathcal{H}_0(I - M_{\lambda}\mathcal{H}_0)^{-1} - H_r(I - M_{\lambda}H_r)^{-1}\\
= (1 - r)\left(\mathcal{H}_0(I - M_{\lambda}\mathcal{H}_0)^{-1}M_{\lambda}H(I - M_{\lambda}H_r)^{-1} + H(I - M_{\lambda}H_r)^{-1}\right).\end{multline*}
Recalling that $(\lambda-\T_{H_r})^{-1}f=C_{\lambda}f + \Xi_{\lambda}H_r(I - M_{\lambda}H_r)^{-1}G_{\lambda}f$ (see \cite[Eq. (4.6)]{abl2}), by virtue of  \eqref{res2} one has then, for any $f\in X$,
\begin{multline*}
(\lambda - \mathcal{A}_0)^{-1}f -  (\lambda - \T_{H_r})^{-1}f =\Xi_\lambda \mathcal{H}_0(I - M_{\lambda}\mathcal{H}_0)^{-1} G_{\lambda}f - \Xi_\lambda H_r(I - M_{\lambda}H_r)^{-1}G_{\lambda}f  \\
= (1 - r)\Xi_\lambda \left(\mathcal{H}_0(I - M_{\lambda}\mathcal{H}_0)^{-1}M_{\lambda}H(I - M_{\lambda}H_r)^{-1} + H(I - M_{\lambda}H_r)^{-1}\right)G_{\lambda}f.\end{multline*}
If $f\geq 0$, according to Proposition \ref{th3} \textit{(iv)}, we get $(\lambda - \mathcal{A}_0)^{-1}f \geq (\lambda - {\T}_{H_r})^{-1}f$ for any $0< r < 1$.  This inequality together with \eqref{resoA}  allow to state that $(\lambda - \mathcal{A}_0)^{-1} \geq (\lambda - \mathcal{A})^{-1}$ which gives the result according to  the
exponential formula. \end{proof}

We recall now the recent result of the first author \cite{al3} about the construction of a suitable strongly continuous family of bounded linear operators in $X$.
First, let
$$\mathcal{D}_0=\{f \in \mathscr{D}(\T_\mathrm{max})\::\:\B^{\pm} f=0\}.$$
The subset $\mathcal{D}_0$ is dense in $X$ (see \cite[Proposition 1]{al3}). Remember that the semigroup $\uot$ is defined through \eqref{u0}. Now, one introduces the following
\begin{definition}\label{UK}
For any $t \geq 0$, we define the family $(U_k(t))_{k \in \mathbb{N}}$ by induction as follows: if $f \in \mathcal{D}_0$, $t > 0$ and $k \geq 1$, one sets
\begin{equation}\label{Uk(t)}
U_k(t)f(\x)=\begin{cases}
 H(\B^+U_{k-1}(t - \tau_-(\x))f)(\Phi(\x,-\tau_-(\x)))&
\forall \x\in \O  , \quad with \quad \tau_-(\x)\leq t,\\
0 & \forall \x\in \O  \quad with \quad \tau_-(\x) > t.
\end{cases}\end{equation}
Moreover, for $t=0$, we set $U_k(0)f = 0$ for any $k \geq 1$ and any $f \in X$.
\end{definition}
\begin{remark} In other words, if we put $\O_t := \{\x\in\O: \x = \Phi(\y,s), \y\in\Gamma_-, 0 < s < t\wedge\tau_+(\y)\}$, then $[U_k(t)f](\x)$ may be different from zero only for $\x \in\O_t$, being $U_k(t)f(\Phi(\y,s)) = H(\B^+U_{k-1}(t - s)f)(\y)$.
\end{remark}
\begin{remark}\label{rem:B+z}
Notice that, given $f \in \mathcal{D}_0$ and $t > 0$, one has $\left(\int_0^t U_k(s)f \d s\right)(\x)=0$  for any $\x \in \O$ with $\tau_-(\x) > t $. In particular,
\begin{equation}\label{eq:B^+z}
\B^+\left(\int_0^t U_k(s)f \d s \right)(\z)=0 \qquad \forall \z \in \Gamma_+\,;\;\tau_-(\z) > t\:,\;k \geq 1.\end{equation}
\end{remark}
The properties of the family $(U_k(t))_{t \geq 0}$, for given $k \geq 1$, have been established in \cite{al3}. In particular, for any $f \in \mathcal{D}_0$ and any $t > 0$, one has $U_k(t)f \in X$ with
$$\|U_k(t)f \|_X \leq \|H\|_{\Be(\lp,\lm)}^k \|f\|_X=\|f\|_X \qquad \forall k \geq 1.$$
Since $\mathcal{D}_0$ is dense in $X$, one can extend $U_k(t)$ in a bounded linear operator in $X$, still denoted $U_k(t)$ such that
$$\|U_k(t)\|_{\Be(X)} \leq 1.$$
Moreover, one has the following
\begin{proposition}\label{propertiesUk}
For any $k \geq 1$, the family $(U_k(t))_{t\geq 0}$ enjoys the following properties:
\begin{enumerate}
\item $(U_k(t))_{t\geq 0}$  is a strongly continuous family of operators in $X.$
\item For all $f\in \mathcal{D}_0$ and $t \geq 0$ one has $U_k(t)f \in \D(\T_\mathrm{max})$ with $\T_\mathrm{max}U_k(t)f = U_k(t){\T}_{\mathrm{max}}f$.
\item For all $f\in  \mathcal{D}_0$ and $t \geq 0$ the traces $\B^\pm U_k(t)f \in L^1_\pm$ and the mappings $t \mapsto \B^\pm U_k(t)f \in L^1_\pm$ are continuous.
\item For any $f\in X$, $t \geq 0$ and  $s \geq 0$ we have $U_k(t +
s)f = \sum_{j=0}^kU_j(t)U_{k-j}(s)f$.
\item For all $f\in X$ and $t > 0$ one has $\int_0^t U_k(s)f\d s\in
 \D(\T_\mathrm{max})$ with
 $$ \T_\mathrm{max} \int_0^t U_k(s)f\d s = U_k(t)f.$$
 Moreover, $\B^\pm \left(\int_0^tU_k(s)f \d s\right) \in L^1_\pm$ and
 \begin{equation}\label{B^+int}
 H \B^+\left(\int_0^t U_{k-1}(s)f \d s \right) =\B^-\left(\int_0^t U_k(s)f \d s\right).
 \end{equation}
 \item For any $f\in X$ and $\lambda > 0$, setting $g_k : = \int_0^{\infty}\exp(-\lambda t)U_k(t)f\d t$, one has $g_k \in \D(\T_\mathrm{max})$ with
 \begin{equation*}
  \T_\mathrm{max}g_k =\lambda g_k \quad \text{ for } \quad k \geq 1, \qquad \text{ while } \quad
   \T_\mathrm{max}g_0=\lambda g_0 - f ;\end{equation*}
   and $\B^+g_k = (M_{\lambda}H)^k G_{\lambda}f\in L^1_+$ for any $k\geq 0$ while  $\B^-g_0 =  0$ and $\B^-g_k = H\B^+g_{k - 1}$ if $k\geq 1.$
\item For any nonnegative $f \in X$ and any $t \geq 0$ and $n \geq 1$ one has
\begin{equation}\begin{split}
\sum_{k=0}^{n}\|U_k(t)f\|_X &= \|f\|_X - \left\|\B^+\int_0^t U_n(s)f\d s\right\|_{L^1_+}\\
&+\sum_{k=0}^{n-1}\left[\left\|H\B^+\int_0^t U_k(s)f\d s\right\|_{L^1_-} - \left\|\B^+\int_0^t U_k(s)f\d s\right\|_{L^1_+}\right] .
\label{sumnorm}
\end{split}\end{equation}
In particular,
\begin{equation}\label{bounded}\sum_{k=0}^{n}\|U_k(t)f\|_X \leq \|f\|_X - \left\|\B^+\int_0^t U_n(s)f\d s\right\|_{L^1_+} \leq \|f\|_X.\end{equation}
\end{enumerate}
\end{proposition}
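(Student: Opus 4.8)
The plan is to handle the seven assertions in the order stated: items (1)--(4) are essentially the content of \cite{al3}, items (5)--(6) follow from them by standard Duhamel/Laplace-transform arguments, and the mass balance (7) is where the real work lies.

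For (1)--(4) I would recall the inductive mechanism on $k$. The base case $k=0$ is the $C_0$-semigroup $\uot$ generated by $\T_0$, for which strong continuity, the commutation $\T_\mathrm{max}U_0(t)f=U_0(t)\T_\mathrm{max}f$ on $\D(\T_0)\supseteq\mathcal{D}_0$, the semigroup identity, and --- thanks to $\mathcal{D}_0\subset\D(\T_0)\subset\mathscr{W}$ --- the trace facts $\B^-U_0(t)f=0$, $\B^+U_0(t)f\in\lp$ and the continuity of $t\mapsto\B^+U_0(t)f$ are all classical. For the inductive step one uses that along the characteristic issued from $\y\in\Gamma_-$ one has $[U_k(t)f](\Phi(\y,s))=[H\B^+U_{k-1}(t-s)f](\y)$ for $0<s<t\wedge\tau_+(\y)$ (Definition \ref{UK}); combining the identification of $\mu$ on $\O_-$ with $\mu_-\otimes\d s$, the boundedness of $H$, and the regularity of $s\mapsto\B^+U_{k-1}(s)f$ from the previous level, strong continuity and the commutation relation pass to level $k$, while Theorem \ref{theotrace} upgrades $\B^-U_k(t)f=H\B^+U_{k-1}(t)f\in\lm$ to $\B^+U_k(t)f\in\lp$. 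The binomial identity (4) follows by splitting the characteristic at the intermediate time and inducting once more on $k$.

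For (5), write $z_k(t):=\int_0^t U_k(s)f\,\d s$. Taking first $f\in\mathcal{D}_0$, the cocycle relation (4) gives $h^{-1}\big(U_k(\sigma+h)f-U_k(\sigma)f\big)\to U_k(\sigma)\T_\mathrm{max}f$ as $h\to0$ (using $h^{-1}(U_0(h)f-f)\to\T_\mathrm{max}f$ and $h^{-1}\|U_m(h)f\|_X\to0$ for $m\geq1$, the latter because $\B^+U_{m-1}(0)f=0$), so $s\mapsto U_k(s)f$ is $C^1$ and, by closedness of $\T_\mathrm{max}$, $z_k(t)\in\D(\T_\mathrm{max})$ with $\T_\mathrm{max}z_k(t)=U_k(t)f$ for $k\geq1$; integrating $\B^-U_k(s)f=H\B^+U_{k-1}(s)f$ and commuting $H$ and $\B^-$ past the Bochner integral yields $\B^-z_k(t)=H\B^+z_{k-1}(t)$, which is \eqref{B^+int}. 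Density of $\mathcal{D}_0$, the uniform bounds $\|U_k(t)\|_{\Be(X)}\leq1$, and closedness of $\T_\mathrm{max}$ and of the $L^1_\pm$-trace maps extend this to $f\in X$. Item (6) is the Laplace transform of (5): integrating $\T_\mathrm{max}z_k(t)=U_k(t)f$ against $e^{-\lambda t}$ and invoking closedness gives $\T_\mathrm{max}g_k=\lambda g_k$ ($k\geq1$) and $\T_\mathrm{max}g_0=\lambda g_0-f$ (since $g_0=C_\lambda f$); commuting $\B^+$ with the integral, the characteristic formula for $U_k$ together with a shift $t\mapsto t-\tau_-(\z)$ in the time variable produce $\B^+g_k=M_\lambda H\,\B^+g_{k-1}$, so $\B^+g_k=(M_\lambda H)^k G_\lambda f$ by induction from $\B^+g_0=\B^+C_\lambda f=G_\lambda f$; the formulas for $\B^-g_k$ follow identically.

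The heart of the matter is (7), where Green's formula does the job. Fix $f\geq0$; by positivity of $H$ and of $\uot$ every $U_k(t)f$ is nonnegative, so $\|U_k(t)f\|_X=\int_\O U_k(t)f\,\d\mu$ and likewise $\B^\pm z_k(t)\geq0$. Since $z_k(t)\in\mathscr{W}$ by (5), applying \eqref{greenform} to $z_k(t)$ gives, for $k\geq1$,
\[
\|U_k(t)f\|_X=\int_\O\T_\mathrm{max}z_k(t)\,\d\mu=\big\|H\B^+z_{k-1}(t)\big\|_{\lm}-\big\|\B^+z_k(t)\big\|_{\lp},
\]
whereas for $k=0$, since $z_0(t)\in\D(\T_0)$ (so $\B^-z_0(t)=0$) and $\T_\mathrm{max}z_0(t)=U_0(t)f-f$, one gets $\|U_0(t)f\|_X=\|f\|_X-\|\B^+z_0(t)\|_{\lp}$. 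Summing over $k=0,\dots,n$ and re-indexing the resulting telescoping sum yields exactly \eqref{sumnorm}, and \eqref{bounded} is then immediate from $\|H\|_{\Be(\lp,\lm)}=1$, which makes every bracket in \eqref{sumnorm} nonpositive. The main obstacle I anticipate is not a single deep step but the careful bookkeeping of traces throughout the induction: one must guarantee that the genuine $L^1_\pm$-traces (not merely the $Y_\pm$-traces of Theorem \ref{theotrace}) exist at every level and that $\B^\pm$ really commutes with the Bochner integrals in $s$ and with $H$; this relies on the characterization $\mathscr{W}=\{f:\B^-f\in\lm\}=\{f:\B^+f\in\lp\}$, on propagating graph-norm continuity from level $k-1$ to level $k$, and on the $\mathcal{D}_0\to X$ density argument together with the uniform bounds, and has to be carried through at each inductive level.
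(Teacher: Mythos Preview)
The paper does not actually prove this proposition: it states just before it that ``The properties of the family $(U_k(t))_{t \geq 0}$, for given $k \geq 1$, have been established in \cite{al3}'' and then uses the proposition as a black box. So there is no proof in the paper to compare against beyond the citation.

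Your sketch is correct and is precisely the argument one expects \cite{al3} to contain. Items (1)--(4) by induction on $k$ along characteristics, item (5) via the cocycle identity (4) together with the observation $h^{-1}\|U_m(h)f\|_X\to 0$ for $m\geq 1$ and $f\in\mathcal{D}_0$, item (6) by Laplace-transforming (5), and item (7) by applying Green's formula \eqref{greenform} to each $z_k(t)=\int_0^t U_k(s)f\,\d s\in\mathscr{W}$ and telescoping --- this is exactly the natural route, and your derivation of \eqref{sumnorm} is clean. The technical concern you flag at the end (propagating genuine $L^1_\pm$-integrability of traces through the induction and through the density argument $\mathcal{D}_0\to X$) is indeed the only delicate point, and you have identified the right tools for it: the characterization of $\mathscr{W}$ in Theorem \ref{theotrace} and closedness of $\T_\mathrm{max}$ under graph-norm limits.
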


The above listed properties allow  to give a characterization of the semigroup  $(V_H(t))_{t\geq 0}$ in terms of a strongly convergent expansion series, reminiscent to classical Dyson-Phillips expansion series for additive perturbation:
\begin{theorem}\label{theoDP}
For any $f \in X$ and any $t \geq 0$, one has
\begin{equation}
V_H(t)f = \sum_{k=0}^{\infty}U_k(t)f .
\label{sum}
\end{equation}
\label{thuv}
\end{theorem}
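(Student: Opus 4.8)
The plan is to identify the series $\sum_{k=0}^\infty U_k(t)f$ as a substochastic $C_0$-semigroup generated by an extension of $\T_H$, and then to invoke the minimality of $(V_H(t))_{t\geq0}$ from Theorem~\ref{charaA}, together with a matching upper bound, to force equality. First I would set $W(t)f := \sum_{k=0}^\infty U_k(t)f$ for $f\in X$, $t\geq 0$. The bound \eqref{bounded} from Proposition~\ref{propertiesUk}(7) guarantees that this series converges absolutely in $X$ for every nonnegative $f$, with $\|W(t)f\|_X \leq \|f\|_X$; by linearity and density this extends to all $f\in X$, so each $W(t)$ is a well-defined positive contraction on $X$, and $W(0)=I$ since $U_0(0)=I$ and $U_k(0)=0$ for $k\geq 1$. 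The semigroup property $W(t+s)=W(t)W(s)$ follows from the convolution identity $U_k(t+s)f=\sum_{j=0}^k U_j(t)U_{k-j}(s)f$ of Proposition~\ref{propertiesUk}(4): summing over $k\geq 0$ and using absolute convergence to rearrange the resulting double series into a Cauchy product yields $W(t+s)f=W(t)W(s)f$. Strong continuity at $t=0$ is the delicate point and I address it below.

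Next I would compute the Laplace transform of $W(\cdot)f$. By dominated convergence (again using \eqref{bounded}) one has, for $\lambda>0$,
\begin{equation*}
\int_0^\infty e^{-\lambda t}W(t)f\,\d t = \sum_{k=0}^\infty \int_0^\infty e^{-\lambda t}U_k(t)f\,\d t = \sum_{k=0}^\infty g_k,
\end{equation*}
in the notation of Proposition~\ref{propertiesUk}(6). From (6) we know $g_0=C_\lambda f$ (since $C_\lambda=(\lambda-\T_0)^{-1}$ and $\B^-g_0=0$, $\T_\mathrm{max}g_0=\lambda g_0-f$ characterise $g_0$), while for $k\geq 1$, $g_k\in\D(\T_\mathrm{max})$ with $\T_\mathrm{max}g_k=\lambda g_k$, $\B^+g_k=(M_\lambda H)^kG_\lambda f$ and $\B^-g_k=H\B^+g_{k-1}$; hence $g_k=\Xi_\lambda\B^-g_k=\Xi_\lambda H(M_\lambda H)^{k-1}G_\lambda f$ by Lemma~\ref{lemmaML}(2) and the representation in \cite[Theorem~3.2]{abl2}. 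Summing, $\int_0^\infty e^{-\lambda t}W(t)f\,\d t = C_\lambda f + \sum_{k=1}^\infty \Xi_\lambda H(M_\lambda H)^{k-1}G_\lambda f = C_\lambda f + \sum_{n=0}^\infty \Xi_\lambda H(M_\lambda H)^n G_\lambda f$, which is exactly $(\lambda-\A)^{-1}f$ by \eqref{resoA}. Thus $W(\cdot)f$ and $V_H(\cdot)f$ have the same Laplace transform for every $f$.

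The remaining work is to promote this Laplace-transform identity into the pointwise identity \eqref{sum}. Since $V_H(\cdot)f$ is continuous and $W(\cdot)f$ is a.e.\ defined with the same (locally bounded, absolutely integrable against $e^{-\lambda t}$) Laplace transform, uniqueness of the Laplace transform gives $W(t)f=V_H(t)f$ for a.e.\ $t$; to upgrade to every $t$ I need $W(\cdot)f$ to be continuous, i.e.\ the strong continuity of $(W(t))_{t\geq0}$, which is where I expect the main obstacle. I would establish it by showing $t\mapsto \int_0^t W(s)f\,\d s$ is continuously differentiable with derivative $W(t)f$ on a dense set and then invoke the general fact that a locally bounded, weakly measurable one-parameter contraction semigroup on a separable Banach space whose orbits are a.e.\ equal to a continuous function is strongly continuous; concretely, Proposition~\ref{propertiesUk}(5) gives $\T_\mathrm{max}\int_0^t U_k(s)f\,\d s=U_k(t)f$, and summing over $k$ (the sum $\sum_k \int_0^t U_k(s)f\,\d s$ converges in $X$ and $\T_\mathrm{max}$ is closed) yields $\int_0^t W(s)f\,\d s\in\D(\T_\mathrm{max})$ with $\T_\mathrm{max}\int_0^t W(s)f\,\d s = W(t)f$, so $W(t)f - f = \T_\mathrm{max}\int_0^t W(s)f\,\d s \to 0$ as $t\to 0^+$ once one controls the trace contributions via \eqref{B^+int} and Proposition~\ref{propertiesUk}(7). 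Having strong continuity, $(W(t))_{t\geq 0}$ is a substochastic $C_0$-semigroup whose resolvent equals $(\lambda-\A)^{-1}$, hence its generator is $\A$, and by uniqueness of the semigroup generated by $\A$ (or directly by injectivity of the Laplace transform on continuous functions) we conclude $W(t)=V_H(t)$ for all $t\geq 0$, which is \eqref{sum}.
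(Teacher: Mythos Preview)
Your core strategy---compute the Laplace transform of $W(t)f:=\sum_{k=0}^\infty U_k(t)f$ term by term via Proposition~\ref{propertiesUk}(6), identify it with the resolvent series \eqref{resoA}, and then appeal to injectivity of the Laplace transform---is exactly what the paper does. The computation of $g_k=\Xi_\lambda H(M_\lambda H)^{k-1}G_\lambda f$ for $k\geq 1$ and the summation to $(\lambda-\A)^{-1}f$ match the paper's proof line for line.

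The difference lies in how the $C_0$-semigroup property of $(W(t))_{t\geq 0}$ is handled. The paper does not prove strong continuity at all: it simply invokes \cite[Theorem~4.3]{al3}, where the fact that $\sum_k U_k(t)$ defines a substochastic $C_0$-semigroup is already established. You instead attempt to prove strong continuity from scratch, and this is where your argument is incomplete. Your proposed route---writing $W(t)f-f=\T_\mathrm{max}\int_0^t W(s)f\,\d s$ and claiming this tends to $0$---is circular as stated: the right-hand side has norm $\|W(t)f-f\|_X$, which is precisely the quantity you want to show vanishes. Controlling it via trace terms and \eqref{B^+int} is plausible but not carried out, and the closedness of $\T_\mathrm{max}$ needed to pass the sum through is not addressed. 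A cleaner self-contained route (if you do not want to cite \cite{al3}) is to note that for $f\geq 0$ the partial sums $S_n(t)f$ are continuous and monotone in $n$, so once you know $W(t)f=V_H(t)f$ for a.e.\ $t$, a Dini-type argument on $\|W(t)f\|$ together with monotonicity forces equality everywhere.

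Finally, your opening plan to use the minimality Theorem~\ref{charaA} is never actually invoked: once the Laplace transform of $W(\cdot)$ is shown to equal $(\lambda-\A)^{-1}$, the generator is identified as $\A$ itself, not merely an extension of $\T_H$, so minimality is redundant. The paper likewise does not use Theorem~\ref{charaA} here.
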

\begin{proof} For any  $f \in X$ and any $t \geq 0$, set $V(t)f=\sum_{k=0}^{\infty}U_k(t)f.$ Notice that the  series is convergent in $X$ and  the family $(V(t))_{t\geq 0}$ defines a substochastic $C_0$-semigroup in $X$ (see \cite[Theorem 4.3]{al3} for details). Let us prove that $V(t)=V_H(t)$  for all $t\geq 0$. Let $f \in X$ and $\lambda > 0$ be fixed. Set, for any $k\geq 1$,
$$g_k = \int_0^{\infty}\exp(-\lambda t)U_k(t)f\d t.$$
 Proposition \ref{propertiesUk} asserts that $g_k \in \D(\T_\mathrm{max})$ and satisfies $\T_\mathrm{max}g_k=\lambda g_k$ for any $k \geq 1.$ According to \cite[Theorem 2.1]{abl2} we deduce that, for $k \geq 1$, $ g_k = \Xi_{\lambda}H\B^+g_{k-1} = \Xi_{\lambda}H(M_{\lambda}H)^{k-1}G_{\lambda}f$. Summing this identity, we get that
$$\int_0^{\infty}\exp(-\lambda t)W(t)f\d t = \sum_{k=0}^{\infty}g_k  = C_{\lambda}f + \sum_{k=0}^{\infty}\Xi_{\lambda}H(M_{\lambda}H)^{k}G_{\lambda}f.$$
Since this last expression coincides with $(\lambda-\A)^{-1}f$, one deduces from the injectivity of Laplace transform that $V(t)f=V_H(t)f$ for any $t\geq0.$\end{proof}

An immediate consequence of the above Theorem \ref{thuv} is given in the following
\begin{corollary}
For any $f\in X$ and $\lambda > 0$,
as $n \rightarrow \infty$, the sum $\displaystyle \sum_{k = 0}^{n}\int_0^{\infty} \exp(-\lambda t)U_k(t)f\d t$ converges to  $(\lambda - \mathcal{A})^{-1}f$ in the graph norm of $\mathcal{A}$.
\label{coruv}
\end{corollary}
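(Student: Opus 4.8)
The plan is to obtain the statement by combining two facts already established above. From (the proof of) Theorem~\ref{theoDP} one gets the $X$-convergence of the partial sums to $(\lambda-\A)^{-1}f$, and from Proposition~\ref{propertiesUk}~\textit{(6)} one gets the action of $\T_\mathrm{max}$ on the individual terms, which forces the corresponding convergence of the images under $\T_\mathrm{max}$.

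Fix $f\in X$ and $\lambda>0$ and set
\[
g_k=\int_0^\infty\exp(-\lambda t)\,U_k(t)f\,\d t,\qquad S_n=\sum_{k=0}^n g_k,
\]
so $S_n$ is exactly the partial sum in the statement. First, $S_n\to(\lambda-\A)^{-1}f=:u$ in $X$. This is precisely what is shown inside the proof of Theorem~\ref{theoDP}: there one has $g_0=C_\lambda f$ and $g_k=\Xi_\lambda H(M_\lambda H)^{k-1}G_\lambda f$ for $k\geq 1$, whence, by \eqref{resoA},
\[
\sum_{k=0}^\infty g_k=C_\lambda f+\sum_{k=0}^\infty\Xi_\lambda H(M_\lambda H)^k G_\lambda f=(\lambda-\A)^{-1}f,
\]
the interchange of the series with the Laplace integral being justified by the uniform bound $\sum_{k=0}^n\|U_k(t)f\|_X\leq\|f\|_X$ of Proposition~\ref{propertiesUk}~\textit{(7)} together with dominated convergence.

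Second, Proposition~\ref{propertiesUk}~\textit{(6)} gives $g_k\in\D(\T_\mathrm{max})$ with $\T_\mathrm{max}g_k=\lambda g_k$ for $k\geq 1$ and $\T_\mathrm{max}g_0=\lambda g_0-f$; summing over $0\leq k\leq n$ yields $S_n\in\D(\T_\mathrm{max})$ with $\T_\mathrm{max}S_n=\lambda S_n-f$ for every $n$. Since $u=(\lambda-\A)^{-1}f\in\D(\A)\subseteq\D(\T_\mathrm{max})$ and $\A$ is the restriction of $\T_\mathrm{max}$ to $\D(\A)$ (Theorem~\ref{limiting}), the identity $(\lambda-\A)u=f$ also reads $\A u=\T_\mathrm{max}u=\lambda u-f$. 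Therefore
\[
\|S_n-u\|_X+\|\T_\mathrm{max}S_n-\A u\|_X=(1+\lambda)\,\|S_n-u\|_X\longrightarrow 0\qquad(n\to\infty),
\]
which is the asserted convergence in the graph norm of $\A$.

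The one point I would flag as mildly delicate is the reading of ``graph norm of $\A$'': the partial sums $S_n$ lie in $\D(\T_\mathrm{max})$ but, as a rule, \emph{not} in $\D(\A)$ --- indeed if $S_n\in\D(\A)$ then $(\lambda-\A)(S_n-u)=(\lambda-\T_\mathrm{max})(S_n-u)=0$ would force $S_n=u$, which is impossible unless the series terminates --- so the norm above is to be understood through $\T_\mathrm{max}$, which agrees with the $\A$-graph norm at the limit $u$ because $\A u=\T_\mathrm{max}u$ there. Beyond this bookkeeping, and the series/integral interchange (already carried out within the proof of Theorem~\ref{theoDP}), there is no real obstacle: the corollary is essentially an immediate consequence of Theorem~\ref{theoDP} and Proposition~\ref{propertiesUk}~\textit{(6)}.
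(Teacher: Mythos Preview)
Your argument is correct and is exactly the reasoning the paper has in mind when it calls the corollary an ``immediate consequence'' of Theorem~\ref{theoDP}: convergence of $S_n$ to $(\lambda-\A)^{-1}f$ in $X$ comes from \eqref{resoA} (via the identification $g_0=C_\lambda f$, $g_k=\Xi_\lambda H(M_\lambda H)^{k-1}G_\lambda f$), and the relation $\T_\mathrm{max}S_n=\lambda S_n-f$ from Proposition~\ref{propertiesUk}~\textit{(6)} upgrades this to convergence of $\T_\mathrm{max}S_n$ as well.

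Your flag is well taken and worth recording: the partial sums $S_n$ lie in $\D(\T_\mathrm{max})$ but, as you observe, generically \emph{not} in $\D(\A)$ (otherwise $(\lambda-\A)(S_n-u)=0$ would force $S_n=u$). So ``graph norm of $\A$'' is a mild abuse of language; what is actually proved --- and what is actually used later (in deriving \eqref{barl2}, where one needs continuity of $\cc$, i.e.\ of $f\mapsto -\int_\O \T_\mathrm{max}f\,\d\mu$) --- is convergence in the graph norm of $\T_\mathrm{max}$, which coincides with the $\A$-graph norm at the limit since $\A u=\T_\mathrm{max}u$. This does not affect the validity of your proof or of the subsequent applications.
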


We end this section with a technical result that complements Proposition \ref{propertiesUk} and shall be useful in the sequel
\begin{lemma}\label{lemmeHBint} Let $f \in X$ be nonnegative and $t > 0$ be given. For any $\z\in \Gamma_+$ and any $k \geq 1$ it holds
$$\left[ \B^+\int_0^t U_k(s)f \d s \right](\z) \leq  \left[H\B^+\int_0^t U_{k-1}(s)f\d s\right](\Phi(\z,-\tau_-(\z)).$$
\end{lemma}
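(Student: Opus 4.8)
The plan is to reduce the claim to a Green-type identity along a single characteristic curve for the function $w_k:=\int_0^t U_k(s)f\,\d s$. By Proposition~\ref{propertiesUk}~\textit{(5)} one has $w_k\in\D(\T_\mathrm{max})$ with $\B^\pm w_k\in L^1_\pm$, $\T_\mathrm{max}w_k=U_k(t)f$, and --- crucially --- $\B^-w_k=H\B^+w_{k-1}$ where $w_{k-1}:=\int_0^t U_{k-1}(s)f\,\d s$. Consequently the right-hand side of the asserted inequality equals $\B^-w_k\big(\Phi(\z,-\tau_-(\z))\big)$, and it suffices to show, for $\mu_+$-a.e.\ $\z\in\Gamma_+$, that
\begin{equation*}
\B^+w_k(\z)\;\le\;\B^-w_k\big(\Phi(\z,-\tau_-(\z))\big).
\end{equation*}
If $\tau_-(\z)=\infty$ this is trivial, since then $\B^+w_k(\z)=0$ by \eqref{eq:B^+z} while the right-hand side is nonnegative ($f\ge0$ and $H$ positive); so I would assume $\tau_-(\z)<\infty$ and set $\y:=\Phi(\z,-\tau_-(\z))\in\Gamma_-$.

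The next step is to record that $U_k(s)f\ge0$ for every $s\ge0$: this follows because $f\ge0$ and, by \eqref{Uk(t)}, each operator $U_j(s)$ is built as a composition of the positive operations $H$, $\B^+$ and pull-back along the flow, hence is positive (cf.\ \cite{al3}); in particular $\T_\mathrm{max}w_k=U_k(t)f\ge0$. Then, choosing a representative $w_k^\sharp$ of $w_k$ that is absolutely continuous along $\mu$-a.e.\ characteristic curve (Definition~\ref{def:transp}), I would write the curve joining $\y$ to $\z$ as $\sigma\mapsto\Phi(\y,\sigma)$, $0<\sigma<\tau_-(\z)$, apply the representation \eqref{fsharp} along it, and let the two endpoints of integration tend to $0+$ and $\tau_-(\z)-$; by the very definition of the traces $\B^-$ at $\y$ and $\B^+$ at $\z$ this produces
\begin{equation*}
\B^-w_k(\y)-\B^+w_k(\z)=\int_0^{\tau_-(\z)}\big(U_k(t)f\big)\big(\Phi(\y,\sigma)\big)\,\d\sigma\;\ge\;0,
\end{equation*}
the integral being finite because the left-hand side is, and nonnegative by the positivity just established. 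Rearranging and using $\B^-w_k=H\B^+w_{k-1}$ gives the assertion.

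The only genuinely delicate point is the passage to the traces at the two endpoints of the characteristic: the computation above is legitimate only if, for $\mu_+$-a.e.\ $\z$, the curve through $\z$ is simultaneously a curve along which $w_k^\sharp$ is absolutely continuous (so that \eqref{fsharp} applies to it) and one at whose endpoints $\y$ and $\z$ the one-sided limits defining $\B^-w_k(\y)$ and $\B^+w_k(\z)$ are attained. This is precisely the bookkeeping afforded by the identification of $\mu$ on $\O_\pm$ with the product of $\mu_\pm$ and Lebesgue measure established in \cite[Section~2]{abl1}, which transfers the exceptional $\mu$-null set of characteristics into $\mu_\pm$-null sets of boundary points; together with the $\mu_\pm$-a.e.\ existence of traces for functions in $\D(\T_\mathrm{max})$ recalled in the preliminaries, it makes the argument rigorous for $\mu_+$-a.e.\ $\z$. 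Everything else is a direct computation from Definition~\ref{def:transp} and Proposition~\ref{propertiesUk}.
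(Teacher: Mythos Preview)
Your proof is correct and follows essentially the same approach as the paper's: both use Proposition~\ref{propertiesUk}~\textit{(5)} to identify $\T_\mathrm{max}w_k=U_k(t)f$ and $\B^-w_k=H\B^+w_{k-1}$, then apply the representation~\eqref{fsharp} along the characteristic from $\y$ to $\z$, exploit the positivity of $U_k(t)f$, and pass to the limit at the endpoints to recover the traces. The only cosmetic difference is that the paper splits cases at $\tau_-(\z)>t$ versus $\tau_-(\z)\le t$ (rather than $\tau_-(\z)=\infty$ versus $\tau_-(\z)<\infty$), which disposes of the trivial case via~\eqref{eq:B^+z} in exactly the same way and changes nothing substantive.
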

\begin{proof} Let $k\geq 1$ and $\z \in \Gamma_+$ be given. If $\tau_-(\z) > t$, one gets from \eqref{eq:B^+z} that
$$\B^+\left(\int_0^t U_k(s)f\d s\right)(\z)=0$$
from which the conclusion clearly holds. Now, if $\tau_-(\z) \leq t$, set $\y =\Phi(\z,-\tau_-(\z)) \in \Gamma_-$. Since  $\int_0^t U_k(s)f\d s\in
 \D(\T_\mathrm{max})$ with  $\T_\mathrm{max} \int_0^t U_k(s)f\d s = U_k(t) f$, one deduces from Definition \ref{def:transp} (see also \cite[Theorem 3.6]{abl1}) that
 $$\int_{t_1}^{t_2} [U_k(t)f](\Phi(\y,s))\d s = \left[\int_0^t U_k(s)f\d s\right](\Phi(\y,t_1)) - \left[\int_0^t U_k(s)f\d s\right](\Phi(\y,t_2))$$
 for any $0 < t_1 < t_2 < \tau_+(\y)=\tau_-(\z)  \leq t.$ In particular, for nonnegative $f$ we get
$$\left[\int_0^t U_k(s)f\d s\right](\Phi(\y,t_2)) \leq  \left[\int_0^t U_k(s)f\d s\right](\Phi(\y,t_1)) \qquad \forall 0 < t_1 < t_2 < \tau_+(\y)=\tau_-(\z)  \leq t.$$
Letting $t_1 \to 0^+$ and $t_2 \to \tau_+(\y)$ and since $\z=\Phi(\y,\tau_+(\y))$, we get
$$\left[\B^+  \int_0^t U_k(s)f \d s \right](\z) \leq \left[\B^-\left(\int_0^t U_k(s)f\d s\right)\right](\y).$$
Using now \eqref{B^+int} and the fact that $\y=\Phi(\z,-\tau_-(\z))$ we get the conclusion.
\end{proof}
\section{Honesty theory}
\def \cc {\mathfrak{a}}
\subsection{On some functionals}
\label{sec4}
\setcounter{equation}{0}

For any $f\in \D(\T_\mathrm{max})$ we define
$${\cc}(f) = - \int_{\O} {\T}_\mathrm{max} f d\mu.$$
while, for any $f \in \mathscr{W}$, we set
$$\cc_0(f) = \int_{\Gamma_+} \B^+f d\mu_+ - \int_{\Gamma_-} H\B^+f d\mu_-.$$
Clearly ${\cc}: \D(\T_\mathrm{max}) \rightarrow \mathbb{R}$ is a linear functional with $|{\cc}(f) | \leq \|{\T}_\mathrm{max} f\|_X$ for any $f\in \D(\T_\mathrm{max})$. Here we are interested in the restriction of ${\cc}$ to $\D(\A)$, that we still denote by ${\cc}$. Since  $\mathcal{A}$ generates a positive contraction semigroup $(V_H(t))_{t\geq 0}$ we have
$${\cc}(f)=-\int_{\O} \A f \d \mu= \lim_{t \to 0^+} t^{-1}\int_{\O} \left(f-V_H(t)f  \right)\d\mu \geq 0 \qquad  \forall f\in {\D(\A)_+}: = {\D(\A)}\cap X_+.$$
Hence ${\cc}\::\:\D(\A) \rightarrow \mathbb{R}$ is a positive linear functional. Furthermore ${\cc}$ is continuous in the graph norm of $\mathcal{A}$ and its restriction to $\mathscr{D}(\T_H)$ is equal to the restriction of $\cc_0$ to $\mathscr{D}(\T_H)$. Indeed, according to Green's formula \eqref{greenform} for all $f\in \mathscr{D}({\T}_H)$ we have
$${\cc}(f) = \int_{\Gamma_+} \B^+f d\mu_+ - \int_{\Gamma_-} \B^-f d\mu_- = \int_{\Gamma_+} \B^+f d\mu_+ - \int_{\Gamma_-} H\B^+f d\mu_- = \cc_0(f).$$

This basic observation allows to formulate an equivalent to \cite[Proposition 4.5]{almm} in this boundary perturbation context. Precisely, one has
\begin{proposition}\label{prop4}
For all $f\in {\D(\A)}$ there exists
\begin{equation}
\lim\limits_{t\rightarrow 0+}\frac{1}{t} \sum_{k = 0}^{\infty} \cc_0\left(\int_0^t U_k(s)f \d s\right) =: \widehat{\cc}(f)
\label{fun2}
\end{equation}
with $|\widehat{\cc}(f)| \leq 2 (\|f\|_X + \|\A f\|_X)$. Furthermore, if $f\in \D(\A)_+$, then
\begin{equation}
0\leq \widehat{\cc}(f) \leq \cc(f) \leq \|\T_\mathrm{max} f\|.
\label{disfun2}
\end{equation}
\end{proposition}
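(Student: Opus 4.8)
The plan is to compute the partial sums of the series in \eqref{fun2} by a telescoping argument based on Green's formula, to identify the limit as $t\to 0^+$ through a Tauberian theorem, and finally to reduce the general case to the nonnegative one.

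Throughout I would write $g_k(t):=\int_0^t U_k(s)f\,\d s$; by Proposition \ref{propertiesUk} \textit{(5)} one has $g_k(t)\in\mathscr{W}$, and, the boundary traces commuting with the Bochner integral (Proposition \ref{propertiesUk} \textit{(3)},\textit{(5)}), $\cc_0(g_k(t))=\int_0^t\cc_0(U_k(s)f)\,\d s$, with the analogous identity for $k=0$ (where $\B^-U_0(s)f=0$); when $f\geq 0$, since $\|H\|_{\Be(\lp,\lm)}=1$, each $\cc_0(U_k(s)f)=\|\B^+U_k(s)f\|_{\lp}-\|H\B^+U_k(s)f\|_{\lm}\geq 0$. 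Applying Green's formula \eqref{greenform} to the $g_k(t)$ — recalling $\T_\mathrm{max}g_k(t)=U_k(t)f$ for $k\geq1$, $\T_\mathrm{max}g_0(t)=U_0(t)f-f$, and $\B^-g_0(t)=0$ — and using the identity $\B^-g_{k+1}(t)=H\B^+g_k(t)$ of \eqref{B^+int}, the integrals over $\Gamma_+$ telescope and one obtains, for every $N$,
$$\sum_{k=0}^{N}\cc_0(g_k(t)) = \int_{\O}f\,\d\mu - \int_{\O}\Big(\textstyle\sum_{k=0}^{N+1}U_k(t)f\Big)\d\mu - \int_{\Gamma_+}\B^+g_{N+1}(t)\,\d\mu_+.$$
For $f\geq0$, using again $\|H\|=1$ together with \eqref{B^+int}, one has $\int_{\Gamma_+}\B^+g_{N+1}(t)\,\d\mu_+\leq\int_{\Gamma_+}\B^+g_N(t)\,\d\mu_+$, so this last term decreases to some $\alpha(t;f)\geq0$; since $\sum_{k=0}^{N+1}U_k(t)f\to V_H(t)f$ in $X$ by Theorem \ref{thuv}, the series $\Sigma(t;f):=\sum_k\cc_0(g_k(t))$ converges, with
$$\Sigma(t;f)=\int_{\O}\big(f-V_H(t)f\big)\,\d\mu-\alpha(t;f).$$

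It then remains to divide by $t$ and let $t\to0^+$. Since $f\in\D(\A)$, $\tfrac1t(f-V_H(t)f)=-\tfrac1t\int_0^tV_H(s)\A f\,\d s\to-\A f$ in $X$, whence $\tfrac1t\int_\O(f-V_H(t)f)\,\d\mu\to-\int_\O\A f\,\d\mu=\cc(f)$. The hard part — the main obstacle — is to show that $\lim_{t\to0^+}\tfrac1t\Sigma(t;f)$ (equivalently, $\lim_{t\to0^+}\tfrac1t\alpha(t;f)$) exists. Assume first $f\geq0$, so that $t\mapsto\Sigma(t;f)=\int_0^t\big(\sum_k\cc_0(U_k(s)f)\big)\,\d s$ is nondecreasing with $\Sigma(0;f)=0$. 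Combining the integral representation above with Proposition \ref{propertiesUk} \textit{(6)} and Fubini's theorem, for every $\mu>0$
$$\int_0^{\infty}e^{-\mu t}\Sigma(t;f)\,\d t=\frac1\mu\sum_{k=0}^{\infty}\cc_0\big(g_k^{(\mu)}\big),\qquad g_k^{(\mu)}:=\int_0^\infty e^{-\mu s}U_k(s)f\,\d s,$$
where, by Lemma \ref{lemmaML} \textit{(2)},\textit{(4)}, $\cc_0(g_k^{(\mu)})=\int_{\Gamma_+}(M_\mu H)^kG_\mu f\,\d\mu_+-\int_{\Gamma_-}H(M_\mu H)^kG_\mu f\,\d\mu_-\geq0$, and $\sum_k\cc_0(g_k^{(\mu)})\leq\|G_\mu f\|_{\lp}$ by the telescoping estimate $\|(M_\mu H)^{k+1}G_\mu f\|_{\lp}\leq\|H(M_\mu H)^kG_\mu f\|_{\lm}$. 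The quantity $\mu\sum_k\cc_0(g_k^{(\mu)})$ is nothing but the resolvent counterpart of $\widehat{\cc}(f)$, whose convergence as $\mu\to\infty$ is controlled by the resolvent‑based honesty theory (cf. \cite[Section 6]{abl2} and \cite{mmk}); since $\Sigma(\cdot;f)$ is nondecreasing, Karamata's Tauberian theorem then gives that $\widehat{\cc}(f):=\lim_{t\to0^+}\tfrac1t\Sigma(t;f)=\lim_{\mu\to\infty}\mu\sum_k\cc_0(g_k^{(\mu)})$ exists. (One could instead argue directly within the semigroup framework, exploiting the cocycle identity $\Sigma(t+s;f)=\Sigma(t;f)+\Sigma(s;V_H(t)f)$ stemming from the composition rule of Proposition \ref{propertiesUk} \textit{(4)}.) For $f\geq0$ the inequalities \eqref{disfun2} are then immediate: $\widehat{\cc}(f)\geq0$ because $\Sigma(t;f)\geq0$, $\widehat{\cc}(f)\leq\cc(f)$ because $\Sigma(t;f)\leq\int_\O(f-V_H(t)f)\,\d\mu$, and $\cc(f)=-\int_\O\T_\mathrm{max}f\,\d\mu\leq\|\T_\mathrm{max}f\|$.

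Finally, for general $f\in\D(\A)$ I would set $h:=f-\A f$ and $f_{\pm}:=(I-\A)^{-1}h^{\pm}$, so that $f=f_+-f_-$ with $f_{\pm}\in\D(\A)_+$ (the resolvent of $\vt$ being positive) and $\|\A f_{\pm}\|_X=\|f_{\pm}-h^{\pm}\|_X\leq2\|h^{\pm}\|_X$; by linearity of $U_k(t)$ and of $\cc_0$, $\Sigma(t;f)=\Sigma(t;f_+)-\Sigma(t;f_-)$, so $\widehat{\cc}(f):=\widehat{\cc}(f_+)-\widehat{\cc}(f_-)$ exists, and from the nonnegative case
\begin{multline*}
|\widehat{\cc}(f)|\leq\widehat{\cc}(f_+)+\widehat{\cc}(f_-)\leq\cc(f_+)+\cc(f_-)\leq\|\A f_+\|_X+\|\A f_-\|_X\\
\leq2\big(\|h^+\|_X+\|h^-\|_X\big)=2\|f-\A f\|_X\leq2\big(\|f\|_X+\|\A f\|_X\big),
\end{multline*}
which gives the stated estimate. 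The delicate point, to be treated with care, is the existence of the limit defining $\widehat{\cc}(f)$ in the third paragraph.
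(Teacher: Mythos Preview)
Your telescoping computation for $\Sigma(t;f)=\sum_k\cc_0(g_k(t))$ and the final reduction of the general case to the nonnegative one via $f_\pm=(I-\A)^{-1}h^\pm$ are both correct and coincide with what the paper does (these are precisely the contents of Lemma~\ref{lemme42}, in particular \eqref{stim0}--\eqref{stima11} and the estimate \eqref{stima3}). The only substantive issue is the step you yourself flag as ``the main obstacle'': the existence of $\lim_{t\to0^+}\tfrac1t\Sigma(t;f)$ for $f\in\D(\A)_+$.

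Your primary route through Karamata's theorem is circular. You need the Abelian input $\mu\sum_k\cc_0(g_k^{(\mu)})\to L$ as $\mu\to\infty$, and you cite \cite[Section~6]{abl2} and \cite{mmk} for this. But those references do \emph{not} establish it: in \cite{mmk} the functional $c_\lambda$ is defined at a fixed $\lambda$, and its independence of $\lambda$ was explicitly left open there (see \cite[Remark~17]{mmk}). In the present paper that $\lambda$-independence --- and hence the convergence you invoke --- is obtained \emph{as a consequence} of Proposition~\ref{prop4} through Corollary~\ref{corol1} (cf.\ Remark~\ref{remmu}). So you are using a corollary of the proposition to prove the proposition. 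A second, smaller point: your intermediate identity $\cc_0(g_k(t))=\int_0^t\cc_0(U_k(s)f)\,\d s$ presupposes $U_k(s)f\in\mathscr{W}$, which Proposition~\ref{propertiesUk}\,\textit{(3)} guarantees only for $f\in\mathcal{D}_0$; it is safer (and sufficient) to work directly with the integrated quantities $g_k(t)$ as in \eqref{sumnorm}.

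The parenthetical alternative you mention --- the cocycle identity $\Sigma(t+s;f)=\Sigma(t;f)+\Sigma(s;V_H(t)f)$ coming from Proposition~\ref{propertiesUk}\,\textit{(4)} --- is exactly the paper's approach, and it is self-contained. Iterating it on a partition of $[0,t]$ with mesh $\tau$ gives $\Sigma(t;f)=\sum_{j}\Sigma(\tau;V_H(j\tau)f)$; combining this Riemann-type sum with the uniform Lipschitz bound $|\Sigma(\tau;g)|\leq 2\tau(\|g\|_X+\|\A g\|_X)$ of \eqref{stima3} (valid for all $g\in\D(\A)$, and applied with $g=V_H(j\tau)f$) yields, after letting $\tau\to0^+$, the identity
\[
\Sigma(t;f)=\lim_{\tau\to0^+}\frac{1}{\tau}\,\Sigma\!\left(\tau;\int_0^tV_H(s)f\,\d s\right),
\]
which is \eqref{lim2}. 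This shows that the limit $\lim_{\tau\to0^+}\tfrac1\tau\Sigma(\tau;g)$ exists for every $g$ of the form $\int_0^tV_H(s)f\,\d s$; since these form a core for $\A$ and $\tfrac1\tau\Sigma(\tau;\cdot)$ is uniformly bounded in the graph norm by \eqref{stima3}, the limit extends to all $g\in\D(\A)$. Carrying out this argument (rather than the Tauberian shortcut) closes the gap.
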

The proof of Proposition \ref{prop4} is based upon the following
\begin{lemma}\label{lemme42}
For any $f\in X$ and $t > 0$ one has
\begin{equation}
\left|\sum_{k = 0}^{\infty} \cc_0\left(\int_0^t U_k(s)f \d s\right)\right| \leq \|f\|_X.
\label{stim2}
\end{equation}
If $f\in {\D(\A)}$ then one also has
\begin{equation}
\left|\sum_{k = 0}^{\infty}  \cc_0\left(\int_0^t U_k(s)f \d s\right)\right| \leq 2 t\left(\|f\|_X + \|\T_\mathrm{max} f\|_X\right).
\label{stima3}
\end{equation}
\end{lemma}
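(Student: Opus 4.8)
The plan is to reduce both inequalities to a single ``discrete Duhamel'' identity for the partial sums and then read off each bound from it. Throughout, fix $t>0$ and abbreviate $z_k:=\int_0^tU_k(s)f\,\d s$. By Proposition~\ref{propertiesUk}(5) one has, for $k\ge 1$, $z_k\in\mathscr{W}$, $\T_\mathrm{max}z_k=U_k(t)f$ and $\B^-z_k=H\B^+z_{k-1}$; moreover $z_0\in\D(\T_0)\subseteq\mathscr{W}$ with $\T_\mathrm{max}z_0=U_0(t)f-f$ (the semigroup identity for $(U_0(t))_{t\ge 0}$) and $\B^-z_0=0$. Using $H\B^+z_k=\B^-z_{k+1}$ together with the Green formula \eqref{greenform} applied to $z_{k+1}$ to rewrite $\int_{\Gamma_-}H\B^+z_k\,\d\mu_-$, one gets for every $k\ge 0$
\[
\cc_0(z_k)=\int_{\Gamma_+}\B^+z_k\,\d\mu_+-\int_\O U_{k+1}(t)f\,\d\mu-\int_{\Gamma_+}\B^+z_{k+1}\,\d\mu_+ .
\]
Summing over $0\le k\le n$ the boundary terms telescope, and since \eqref{greenform} for $z_0$ yields $\int_{\Gamma_+}\B^+z_0\,\d\mu_+=\int_\O f\,\d\mu-\int_\O U_0(t)f\,\d\mu$ (this is where the extra $-f$ in $\T_\mathrm{max}z_0$ enters), one reaches the identity, which I shall call $(\star)$,
\[
\sum_{k=0}^{n}\cc_0(z_k)=\int_\O f\,\d\mu-\sum_{j=0}^{n+1}\int_\O U_j(t)f\,\d\mu-\int_{\Gamma_+}\B^+z_{n+1}\,\d\mu_+ .
\]

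For \eqref{stim2} I argue first for $f\ge 0$: every term of $(\star)$ is then nonnegative, $\int_\O U_j(t)f\,\d\mu=\|U_j(t)f\|_X$, $\int_{\Gamma_+}\B^+z_{n+1}\,\d\mu_+=\|\B^+z_{n+1}\|_{\lp}$, and \eqref{bounded} (with $n$ replaced by $n+1$) gives $\sum_{j=0}^{n+1}\|U_j(t)f\|_X+\|\B^+z_{n+1}\|_{\lp}\le\|f\|_X$; hence $0\le\sum_{k=0}^{n}\cc_0(z_k)\le\|f\|_X$ for every $n$, and since the two subtracted quantities in $(\star)$ are each monotone and bounded the series converges and satisfies $\bigl|\sum_{k\ge 0}\cc_0(z_k)\bigr|\le\|f\|_X$. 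For general $f$ I split $f=f_+-f_-$ and invoke the linearity of $\cc_0$ and of $f\mapsto z_k$ together with $\|f_+\|_X+\|f_-\|_X=\|f\|_X$.

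For \eqref{stima3}, let $f\in\D(\A)$ and pass to the limit $n\to\infty$ in $(\star)$. By Theorem~\ref{thuv}, $\sum_{j=0}^{n+1}\int_\O U_j(t)f\,\d\mu\to\int_\O V_H(t)f\,\d\mu$, so the boundary remainder $\rho_n:=\int_{\Gamma_+}\B^+z_{n+1}\,\d\mu_+$ converges too and $\sum_{k\ge 0}\cc_0(z_k)=\int_\O(f-V_H(t)f)\,\d\mu-\lim_n\rho_n$. The first term is controlled by $\|f-V_H(t)f\|_X=\bigl\|\int_0^tV_H(s)\A f\,\d s\bigr\|_X\le t\|\A f\|_X=t\|\T_\mathrm{max}f\|_X$. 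For the remainder, using that each $U_k(s)$ is a positive operator one has $|z_{n+1}|\le\widetilde z_{n+1}:=\int_0^tU_{n+1}(s)|f|\,\d s$, whence $|\rho_n|\le\|\B^+z_{n+1}\|_{\lp}\le\|\B^+\widetilde z_{n+1}\|_{\lp}$; applying \eqref{greenform} to the nonnegative element $\widetilde z_{n+1}\in\mathscr{W}$ together with $\|H\|=1$ shows that $n\mapsto\|\B^+\widetilde z_n\|_{\lp}$ is non-increasing, and \eqref{bounded} for $|f|$ forces its limit to be at most $\||f|\|_X-\|V_H(t)|f|\|_X\le\|f\|_X-\|V_H(t)f\|_X\le t\|\T_\mathrm{max}f\|_X$, where I used the positivity of $V_H(t)$ (so $|V_H(t)f|\le V_H(t)|f|$) and the reverse triangle inequality. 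Combining the two estimates yields $\bigl|\sum_{k\ge 0}\cc_0(z_k)\bigr|\le 2t\|\T_\mathrm{max}f\|_X\le 2t(\|f\|_X+\|\T_\mathrm{max}f\|_X)$.

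The main obstacle is the bookkeeping behind $(\star)$: one must keep track of the fact that $z_0$ obeys $\T_\mathrm{max}z_0=U_0(t)f-f$ — with the extra $-f$ coming from the semigroup relation for $(U_0(t))_{t\ge 0}$, rather than from the clean formula of Proposition~\ref{propertiesUk}(5), which holds only for $k\ge 1$ — for otherwise the constant $\int_\O f\,\d\mu$ is lost and both estimates fail. The second, more technical, difficulty is specific to \eqref{stima3}: the boundary remainder $\rho_n$ does \emph{not} in general vanish (it records the mass defect), so it has to be estimated rather than discarded, and handling a sign-changing $f\in\D(\A)$ then makes the passage to $|f|$ unavoidable; there the positivity of the operators $U_k(t)$, the monotonicity of $n\mapsto\bigl\|\B^+\int_0^tU_n(s)|f|\,\d s\bigr\|_{\lp}$ (obtained from Green's formula and $\|H\|=1$), and the a priori bound \eqref{bounded} are precisely what upgrade the remainder estimate to the required $O(t)$ order.
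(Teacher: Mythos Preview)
Your proof is correct. The identity $(\star)$ is precisely the content of the paper's \eqref{sumnorm}/\eqref{stim0}, rewritten via Green's formula, so your treatment of \eqref{stim2} coincides with the paper's apart from presentation (the paper notes directly that each $\cc_0(z_k)\ge 0$ for $f\ge 0$, making the partial sums increasing; your ``two monotone quantities'' remark is a slightly roundabout way to the same conclusion).

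For \eqref{stima3}, however, your route is genuinely different. The paper does \emph{not} estimate the boundary remainder $\rho_n$ for a sign-changing $f\in\D(\A)$. Instead it uses a resolvent decomposition: writing $g=(I-\A)f=g^+-g^-$ and $f_1^\pm=(I-\A)^{-1}g^\pm\in\D(\A)_+$, it applies the nonnegative case (inequality \eqref{stima11}) to each $f_1^\pm$ and then recombines. Your argument avoids this detour by bounding $|\rho_n|$ through $|f|$, exploiting the monotonicity of $n\mapsto\|\B^+\widetilde z_n\|_{\lp}$ (derived from Green's formula and $\|H\|=1$) and the a~priori bound \eqref{bounded}. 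This is more direct and in fact delivers the sharper estimate $2t\|\T_\mathrm{max}f\|_X$, whereas the resolvent decomposition naturally produces the factor $\|f\|_X+\|\T_\mathrm{max}f\|_X$ because $\|\A f_1^\pm\|_X$ is controlled only through $\|g^\pm\|_X$. The paper's approach, on the other hand, has the advantage of never needing to compare $\B^+z_{n+1}$ with $\B^+\widetilde z_{n+1}$ (which relies on the pointwise nature of the trace and the lattice structure), and it stays entirely within the positive cone once the decomposition is made.
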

\begin{proof} For simplicity, for any \emph{fixed} $t > 0$, we set
$$G_k(f)=\int_0^t U_k(s) f \d s \qquad \forall k \geq 1.$$
According to Proposition \ref{propertiesUk} (5),  $G_k(f) \in \D(\T_\mathrm{max})$ for any $f\in X$, $k\geq 1$   with moreover
$\B^+G_k(f) \in L^1_+$, i.e. $G_k(f) \in \mathscr{W}.$  We begin with assuming  $f\in X_+$ and $t > 0$. One can reformulate \eqref{sumnorm} as
\begin{equation}
\sum_{k = 0}^{n-1} \cc_0\left(G_k(f)\right) = \|f\|_X - \sum_{k = 0}^{n}\|U_k(t)f\|_X - \| \B^+G_n(f)\|_{L^1+} \leq \|f\|_X
\label{stim0}
\end{equation}
Therefore, we can see that $\left(\sum_{k = 0}^n \cc_0\left(G_k(f)\right)\right)_n$ is an increasing bounded sequence whose limit satisfies
\begin{equation}\label{stima1}
\sum_{k = 0}^{\infty} \cc_0\left(G_k(f)\right) \leq \|f\|_X - \sum_{k = 0}^{\infty}\|U_k(t)f\|_X.\end{equation}
Now, for general $f \in X$, since $G_k(f) \in \mathscr{W}$, we deduce from \cite[Proposition 2.2]{abl2} that $|G_k(f)| \in \mathscr{W}$ and, since  $U_k(s)$ ($0 < s < t$, $k \geq 0$) is a positive operator, the inequalities
 $$\left|\cc_0\left(G_k(f)  \right)\right| \leq \cc_0\left(\left|G_k(f)\right|\right) \leq \cc_0\left( G_k(|f|)\right) \qquad \forall k \geq 1$$
 hold. This, together with \eqref{stima1} yields \eqref{stim2}. Before proving \eqref{stima3}, one notices that  the right-hand side of \eqref{stima1} for $f \geq 0$ is $$\int_\O \left(f-\sum_{k=0}^\infty U_k(t)f\right)\d \mu=\int_\O \left(f - V_H(t)f\right)\d \mu
=-\int_\O \A\left(\int_0^t V_H(s)f\d s\right) \d \mu$$
where we used Theorem \ref{theoDP} and the well-know fact (see \cite[Lemma 1.3, p. 50]{Engel}) that, for any $C_0$-semigroup $(V_H(t)_{t \geq 0}$ with generator $\A$, one has $\int_0^t V_H(s)f \d s \in \D(\A)$ with $\A \left(\int_0^t V_H(s)f\d s\right)=V_H(t)f-f$ for any $t \geq 0$ and any $f \in X$. Since moreover $V_H(t)f-f=\int_0^t V_H(s)\A f \d s$ if $f \in \D(\A)$, one gets
\begin{equation}\label{stima11}
\sum_{k = 0}^{\infty} \cc_0\left(\int_0^t U_k(s) f \d s\right) \leq \cc\left(\int_0^t V_H(s)f \d s\right)=-\int_\O  \left(\int_0^t V_H(s)\A f\d s\right) \d \mu \qquad \forall f \in \D(\A) \cap X_+.\end{equation}
Let us now  fix $f\in \D(\A)$ and set $g : = (I - \A)f = g^+ - g^-$, where $g_+$ and $g_-$ denote  respectively the positive and negative parts of $g$. Put also $f^\pm_1 = (I - \mathcal{A})^{-1}g_\pm$ so that $f = f^+_1 - {f}^-_1$, where ${f}^\pm_1$ are belonging to $\D(\A)_+$ (notice that $f_1^\pm$ do not necessarily coincide with the positive and negative parts $f^\pm$ of $f$). One has
$$\|\mathcal{A}{f}^\pm_1\|_X \leq \|{f}^\pm_1\|_X + \|g^\pm\|_X \leq 2\|g^\pm\|_X.$$
Recalling that $\mathcal{A}{f}^\pm_1 = \T_\mathrm{max}{f}^\pm_1$ and using formula \eqref{stima11}  we get
\begin{equation*}\begin{split}
\sum_{k = 0}^{\infty} \cc_0\left(\int_0^t U_k(s) f_1^\pm \d s\right) &\leq  -\int_\O  \left(\int_0^t V_H(s)\A f^\pm_1 \d s\right) \d \mu \\
&\leq \int_0^t \|V_H(s)\T_\mathrm{max}f_1^\pm\|\d s  \leq t\|\T_\mathrm{max} f_1^\pm\|_X \leq 2t\|g_\pm\|_X\end{split}\end{equation*}
where we used that the semigroup $(V_H(t))_{t \geq 0}$ is substochastic. Finally, noticing that
$$\left|\sum_{k = 0}^{\infty} \cc_0\left(\int_0^t U_k(s) f  \d s\right)\right| \leq \sum_{k = 0}^{\infty} \cc_0\left(\int_0^t U_k(s) f_1^+ \d s\right) + \sum_{k = 0}^{\infty} \cc_0\left(\int_0^t U_k(s) f_1^- \d s\right)$$ we obtain \eqref{stima3}  since $\|g^+\|_X + \|g^-\|_X = \|g\|_X \leq \|f\|_X + \|\T_\mathrm{max} f\|_X$.\end{proof}

\begin{proof}[Proof of Proposition \ref{prop4}] Using Lemma \ref{lemme42} together with a repeated use of Proposition \ref{propertiesUk} \textit{(4)}, it is not difficult to resume the proof of \cite[Proposition 4.5]{almm} to get the result. We only mention here that the equivalent of \cite[Eq. (4.14)]{almm} in our context is
 \begin{equation}
\sum_{k=0}^{\infty}\cc_0\left(\int_0^t U_k(s)f ds\right) = \lim_{\tau\rightarrow 0+}\frac{1}{\tau}\sum_{k=0}^{\infty}\cc_0\left(\int_0^{\tau} U_k(r)\left(\int_0^tV_H(s)f\d s\right)\d r\right).
\label{lim2}
\end{equation}
Details are omitted.
\end{proof}

As an immediate consequence of Proposition \ref{prop4} we deduce the following
\begin{corollary}\label{corol1}
 For any $f\in X$, $t > 0$ and $\lambda > 0$ one has
 \begin{equation}
\widehat{\cc}\left(\int_0^t V_H(s)f \d s\right) = \sum_{k = 0}^{\infty} \cc_0\left(\int_0^t U_k(s)f \d s\right),
  \label{bart}
  \end{equation}
  and
\begin{equation}
\widehat{\cc}\left((\lambda - \mathcal{A})^{-1}f\right) = \sum_{k = 0}^{\infty}\left(\int_{\Gamma_+}(M_ {\lambda}H)^k G_{\lambda}f \d\mu_+ - \int_{\Gamma_-}H(M_ {\lambda}H)^kG_{\lambda}f \d\mu_-\right).
  \label{barla}
  \end{equation}\end{corollary}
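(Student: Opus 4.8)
The two identities are of a rather different nature, but both follow by combining Proposition \ref{prop4} with elementary manipulations of the objects already constructed. For \eqref{bart}, the plan is to apply the definition \eqref{fun2} of $\widehat{\cc}$ to the element $g_t := \int_0^t V_H(s)f\,\d s$, which belongs to $\D(\A)$ by the standard semigroup fact (\cite[Lemma 1.3, p. 50]{Engel}) already invoked in the proof of Lemma \ref{lemme42}. Thus
\[
\widehat{\cc}\Bigl(\int_0^t V_H(s)f\,\d s\Bigr)=\lim_{\tau\to 0+}\frac{1}{\tau}\sum_{k=0}^\infty\cc_0\Bigl(\int_0^\tau U_k(r)\Bigl(\int_0^t V_H(s)f\,\d s\Bigr)\d r\Bigr),
\]
and the right-hand side is exactly the right-hand side of \eqref{lim2}; hence it equals $\sum_{k=0}^\infty\cc_0\bigl(\int_0^t U_k(s)f\,\d s\bigr)$. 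In other words \eqref{bart} is nothing but a restatement of \eqref{lim2}, which was established in the proof of Proposition \ref{prop4}. The only point to be careful about is that \eqref{lim2} was stated for $f\in\D(\A)$ whereas here $f\in X$ is arbitrary; but $g_t\in\D(\A)$ regardless of whether $f\in\D(\A)$, and the semigroup property $U_k(t+s)f=\sum_{j=0}^k U_j(t)U_{k-j}(s)f$ from Proposition \ref{propertiesUk}\textit{(4)} together with \eqref{stim2} lets one run the same telescoping/limit argument directly for $g_t$ with no domain restriction on $f$. I would spell out that reduction rather than just citing \eqref{lim2}.

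For \eqref{barla}, the plan is to take Laplace transforms in \eqref{bart}, i.e. to multiply by $\lambda e^{-\lambda t}$ and integrate over $t\in(0,\infty)$, using the continuity of $\widehat{\cc}$ in the graph norm of $\A$ (Proposition \ref{prop4}) to move it through the integral. On the left one gets $\widehat{\cc}\bigl(\lambda\int_0^\infty e^{-\lambda t}\int_0^t V_H(s)f\,\d s\,\d t\bigr)=\widehat{\cc}\bigl((\lambda-\A)^{-1}f\bigr)$, where I use $\lambda\int_0^\infty e^{-\lambda t}\int_0^t V_H(s)f\,\d s\,\d t=\int_0^\infty e^{-\lambda s}V_H(s)f\,\d s=(\lambda-\A)^{-1}f$ (Fubini plus the Laplace representation of the resolvent). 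On the right one must interchange the $t$-integral with the sum over $k$ and with the functional $\cc_0$. For each fixed $k$, $\lambda\int_0^\infty e^{-\lambda t}\int_0^t U_k(s)f\,\d s\,\d t=\int_0^\infty e^{-\lambda s}U_k(s)f\,\d s=g_k$ in the notation of Proposition \ref{propertiesUk}\textit{(6)}, and that Proposition tells us $g_k\in\D(\T_\mathrm{max})\cap\mathscr{W}$ with $\B^+g_k=(M_\lambda H)^k G_\lambda f\in L^1_+$ and $\B^-g_k=H\B^+g_{k-1}=H(M_\lambda H)^{k-1}G_\lambda f$ (for $k\ge 1$; $\B^- g_0=0$). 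Hence
\[
\cc_0(g_k)=\int_{\Gamma_+}(M_\lambda H)^k G_\lambda f\,\d\mu_+ - \int_{\Gamma_-}H(M_\lambda H)^k G_\lambda f\,\d\mu_-,
\]
which is the $k$-th term of \eqref{barla}. (One should note $\cc_0(g_k)=\int_{\Gamma_+}\B^+g_k\,\d\mu_+-\int_{\Gamma_-}H\B^+g_k\,\d\mu_-$ by the very definition of $\cc_0$; the second trace $\B^- g_k$ is not what enters $\cc_0$, so it is $H\B^+ g_k=H(M_\lambda H)^kG_\lambda f$, not $\B^- g_k$, that appears — consistent with the statement.) Summing over $k$ and matching with the left side gives \eqref{barla}.

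The main obstacle is purely one of justifying the interchanges of limit, sum and integral in the passage from \eqref{bart} to \eqref{barla}: commuting $\cc_0$ with $\int_0^\infty e^{-\lambda t}(\cdot)\,\d t$, commuting $\sum_k$ with that integral, and commuting $\widehat{\cc}$ with the integral on the left. All of these are controlled by the uniform bounds already in hand — $\|U_k(t)\|_{\Be(X)}\le 1$, the summable estimate \eqref{stim2}/\eqref{stima1}, $|\widehat{\cc}(g)|\le 2(\|g\|_X+\|\A g\|_X)$, and $|\cc_0(g)|\le\cc_0(|g|)$ for $g\in\mathscr{W}$ together with the $Y_\pm$-trace bounds of Theorem \ref{theotrace} — so each interchange is legitimate by dominated convergence; but writing the dominating functions explicitly (in particular checking that $\sum_k\int_0^\infty e^{-\lambda t}\cc_0(\int_0^t U_k(s)|f|\,\d s)\,\d t<\infty$, which follows from \eqref{stima1} after a Laplace transform) is the part that requires genuine care. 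Everything else is bookkeeping with the formulas of Proposition \ref{propertiesUk}.
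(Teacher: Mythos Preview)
Your proposal is correct and follows essentially the same route as the paper: \eqref{bart} is obtained by applying the definition \eqref{fun2} of $\widehat{\cc}$ to $\int_0^t V_H(s)f\,\d s\in\D(\A)$ and invoking \eqref{lim2}, while \eqref{barla} is obtained by Laplace-transforming \eqref{bart} and identifying each term via Proposition \ref{propertiesUk}\textit{(6)}. Your version is in fact more careful than the paper's on two points the paper glosses over --- the observation that \eqref{lim2} is needed for general $f\in X$ rather than $f\in\D(\A)$, and the justification of the interchanges of $\widehat{\cc}$, $\sum_k$, and $\int_0^\infty e^{-\lambda t}(\cdot)\,\d t$ --- so there is no gap.
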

  \begin{proof} Identity \eqref{bart} is simply deduced from \eqref{lim2} and the definition \eqref{fun2}. Regarding \eqref{barla}, observe that for any $f\in X$, and $\lambda > 0$ one has
$$(\lambda - \mathcal{A})^{-1}f = \int_0^{\infty}\exp(-\lambda t)V_H(t)f\d t = \lambda \int_0^{\infty}\exp(-\lambda t)\left(\int_0^t V_H(s)f\d s\right)\d t .$$
Therefore, from \eqref{bart},
\begin{equation*}\begin{split}
\widehat{\cc}\left((\lambda - \mathcal{A})^{-1}f\right)&= \lambda \int_0^{\infty}\exp(-\lambda t)\widehat{\cc}\left(\int_0^t V_H(s)f \d s\right)\d t \\
&= \lambda \int_0^{\infty}\exp(-\lambda t)\sum_{k = 0}^{\infty} \cc_0\left(\int_0^t U_k(s)f \d s\right)\d t.\end{split}\end{equation*}
Setting, $g_k=\displaystyle \int_0^{\infty}\exp(-\lambda t)U_k(t)f \d t$ and $\phi_k(t)=\displaystyle\int_0^t U_k(s)f \d s$, one deduces from  Proposition \ref{propertiesUk} \textit{(6)}  that, for any $k \geq 1$
$$\lambda \int_0^{\infty}\exp(-\lambda t) \B^+\phi_k(t) \d t = \B^+\ g_k= (M_ {\lambda}H)^kG_{\lambda}f,$$
and, recalling that $\cc_0(\phi_k(t))=\displaystyle \int_{\Gamma_+} \B^+\phi_k(t) \d\mu_+ - \int_{\Gamma_-} H\B^+\phi_k(t) \d\mu_-$ we get \eqref{barla}.\end{proof}
\begin{remark} In the free-streaming context, the identity \eqref{barla}  shows that the functional $\widehat{\cc}$ coincides with the functional $c_{\lambda}$ defined in \cite{mmk}. In particular, this shows that  the functional $c_{\lambda}$ of \cite{mmk}  \emph{does not depend on} $\lambda$, answering the question left open in \cite[Remark 17]{mmk}. Moreover, by Proposition \ref{prop4}, we see that the functionals $c_{\lambda}$ and $\widehat{c}$ of \cite{mmk} (corresponding respectively to our $\widehat{\cc}$ and $\cc$) are positive functionals such that $c_{\lambda}(\varphi) \leq \widehat{c}(\varphi)$ for all $\varphi\in {\D(\A)_+}$ which extends the result of \cite[Remark 17]{mmk} valid only for $\varphi\in (\lambda -\mathcal{A})^{-1}X_+$.
\label{remmu}
\end{remark}
Proposition \ref{prop4} allows to define a third linear positive functional
$\Theta: \D(\A) \to \mathbb{R}$ by setting
$$\Theta(f) = \cc(f) - \widehat{\cc}(f) \qquad \text{ for any } f\in \D(\A).$$
Clearly, the functional $\Theta$ is continuous in the graph norm of $\D(\A)$. Other properties of $\Theta$ are stated here below.
\begin{corollary}
 For any $f\in X$, $t > 0$ and $\lambda > 0$ one has
 \begin{equation}
\Theta\left(\int_0^t V_H(s)f \d s\right)=\lim_{n\rightarrow\infty}\int_{\Gamma_+}\B^+\left(\int_0^t U_n(s)f \d s\right)\d\mu_+,
  \label{bart2}
  \end{equation}
  and
\begin{equation}
\Theta\left((\lambda - \mathcal{A})^{-1}f\right)  = \lim_{n\rightarrow\infty} \int_{\Gamma_+}(M_ {\lambda}H)^nG_{\lambda}f \d\mu_+.
 \label{barl2}
 \end{equation}
In particular, both the limits appearing in \eqref{bart2} and \eqref{barl2} exist and are finite for any $f \in X.$
 \end{corollary}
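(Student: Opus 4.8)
The plan is to prove \eqref{bart2} first by the semigroup method, and then to deduce \eqref{barl2} from it by a Laplace transform, exactly mirroring the two-step structure of the proof of Corollary \ref{corol1}. Throughout, for a fixed $t>0$ and $n\geq 1$, abbreviate $\phi_n:=\int_0^t U_n(s)f\,\d s$, which lies in $\mathscr{W}$ by Proposition \ref{propertiesUk} \textit{(5)}, so that $\cc_0(\phi_n)$ is well defined.

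First I would prove \eqref{bart2} for \emph{nonnegative} $f\in X$. Since $\int_0^t V_H(s)f\,\d s\in\D(\A)$ with $\A\int_0^t V_H(s)f\,\d s = V_H(t)f-f$, and since the restriction of $\cc$ to $\D(\A)$ equals $-\int_\O\A(\cdot)\,\d\mu$, Theorem \ref{theoDP} and monotone convergence (each $U_k(t)f\geq 0$) give
$$\cc\Big(\int_0^t V_H(s)f\,\d s\Big)=\int_\O\big(f-V_H(t)f\big)\,\d\mu=\|f\|_X-\sum_{k=0}^{\infty}\|U_k(t)f\|_X.$$
For $f\geq 0$ the traces $\B^+\phi_k$ and $H\B^+\phi_k$ are nonnegative, so the bracketed term in \eqref{sumnorm} equals $-\cc_0(\phi_k)$; rearranging \eqref{sumnorm} therefore yields
$$\|f\|_X-\sum_{k=0}^{n}\|U_k(t)f\|_X=\big\|\B^+\phi_n\big\|_{\lp}+\sum_{k=0}^{n-1}\cc_0(\phi_k).$$
Letting $n\to\infty$, the left-hand side converges to $\cc\big(\int_0^t V_H(s)f\,\d s\big)$, while $\sum_{k=0}^{n-1}\cc_0(\phi_k)\to\widehat{\cc}\big(\int_0^t V_H(s)f\,\d s\big)$ by \eqref{bart} of Corollary \ref{corol1}. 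Hence $\big\|\B^+\phi_n\big\|_{\lp}=\int_{\Gamma_+}\B^+\phi_n\,\d\mu_+$ converges, and its limit equals $\cc-\widehat{\cc}=\Theta$ evaluated at $\int_0^t V_H(s)f\,\d s$, which is \eqref{bart2} for $f\geq 0$. For arbitrary $f\in X$ one splits $f=f^+-f^-$: both sides of \eqref{bart2} are linear in $f$, and by the nonnegative case the right-hand limit exists for $f^\pm$, hence for $f$; this gives \eqref{bart2} in general and, at the same time, the existence of that limit for every $f\in X$.

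For \eqref{barl2}, I would write $(\lambda-\A)^{-1}f=\lambda\int_0^\infty e^{-\lambda t}\big(\int_0^t V_H(s)f\,\d s\big)\,\d t$ as a Bochner integral in the graph norm of $\A$: the integrand is continuous in that norm and bounded by $(t+2)\|f\|_X$ (using $\|V_H(t)f-f\|_X\leq 2\|f\|_X$ by substochasticity), so multiplication by $e^{-\lambda t}$ makes it integrable. Since $\Theta$ is continuous for the graph norm it passes through the integral, and \eqref{bart2} gives
$$\Theta\big((\lambda-\A)^{-1}f\big)=\lambda\int_0^\infty e^{-\lambda t}\,\Theta\Big(\int_0^t V_H(s)f\,\d s\Big)\,\d t=\lambda\int_0^\infty e^{-\lambda t}\Big(\lim_{n\to\infty}\int_{\Gamma_+}\B^+\phi_n\,\d\mu_+\Big)\,\d t.$$
The bound $\big|\int_{\Gamma_+}\B^+\phi_n\,\d\mu_+\big|\leq\big\|\B^+\int_0^t U_n(s)|f|\,\d s\big\|_{\lp}\leq\|f\|_X$, which holds uniformly in $n$ and $t$ by positivity of the operators $U_n(s)$ together with \eqref{bounded}, lets dominated convergence interchange $\lim_n$ with $\int_0^\infty\d t$. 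Then, exactly as in the proof of Corollary \ref{corol1}, Proposition \ref{propertiesUk} \textit{(6)} identifies $\lambda\int_0^\infty e^{-\lambda t}\B^+\phi_n\,\d t=(\ml H)^n\gl f$, whence $\Theta\big((\lambda-\A)^{-1}f\big)=\lim_{n\to\infty}\int_{\Gamma_+}(\ml H)^n\gl f\,\d\mu_+$; since the left-hand side is a well-defined real number, the limit in \eqref{barl2} exists for every $f\in X$.

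The difficulty is essentially bookkeeping: one must match the signs in \eqref{sumnorm} against the definition of $\cc_0$ and keep careful track of which of the scalar series $\sum_k\cc_0(\phi_k)$, $\sum_k\|U_k(t)f\|_X$, $\sum_n\int_{\Gamma_+}(\ml H)^n\gl f\,\d\mu_+$ converge in their own right, together with the elementary reduction to $f\geq 0$ followed by the linearity extension. In the passage to \eqref{barl2} the only genuine point is securing the $(n,t)$-uniform bound on $\int_{\Gamma_+}\B^+\phi_n\,\d\mu_+$ so that dominated convergence applies; the graph-norm continuity of $\Theta$, the Bochner representation of the resolvent, and the Laplace identity for $\B^+\phi_n$ are already available from the discussion preceding the Corollary, the standard semigroup calculus, and Proposition \ref{propertiesUk}.
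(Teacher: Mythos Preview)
Your argument is correct. For \eqref{bart2} you proceed exactly as the paper does: both of you rearrange the mass identity \eqref{sumnorm} (valid for $f\geq 0$) into
\[
\|f\|_X-\sum_{k=0}^{n}\|U_k(t)f\|_X=\big\|\B^+\phi_n\big\|_{\lp}+\sum_{k=0}^{n-1}\cc_0(\phi_k),
\]
identify the limit of the left side with $\cc\big(\int_0^t V_H(s)f\,\d s\big)$ and the limit of the partial sum on the right with $\widehat{\cc}$ via \eqref{bart}, and conclude. You make the positivity assumption and the linearity extension to general $f$ explicit, which the paper leaves implicit; otherwise this is the same proof.

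For \eqref{barl2} the routes diverge. The paper repeats the \emph{same} telescoping directly on the Laplace-transformed objects $g_k=\int_0^\infty e^{-\lambda t}U_k(t)f\,\d t$, writing
\[
\sum_{k=0}^{n}\cc(g_k)=\sum_{k=0}^{n-1}\cc_0(g_k)+\int_{\Gamma_+}\B^+g_n\,\d\mu_+,
\]
and then passes to the limit in the graph norm using Corollary~\ref{coruv}; this is shorter and treats the two identities in parallel. You instead deduce \eqref{barl2} from \eqref{bart2} via the Bochner representation $(\lambda-\A)^{-1}f=\lambda\int_0^\infty e^{-\lambda t}\big(\int_0^t V_H(s)f\,\d s\big)\,\d t$, pushing $\Theta$ through the integral by graph-norm continuity and then swapping $\lim_n$ with $\int_0^\infty\d t$ by dominated convergence (the uniform bound $\|f\|_X$ coming from \eqref{bounded}). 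This is a genuinely different path: it costs a couple of extra justifications (Bochner integrability in the graph norm, the dominated-convergence bound, and a Fubini step when integrating over $\Gamma_+$), but it makes transparent that \eqref{barl2} is a \emph{consequence} of \eqref{bart2} rather than an independent computation.
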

\begin{proof} As in the  proof of Corollary \ref{corol1}, for fixed $f \in X$, $\lambda > 0$ and $t \geq 0$ set
$$g_k= \int_0^{\infty}\exp(-\lambda t)U_k(t)f \d t \qquad \text{ and  } \qquad\phi_k(t)=\int_0^t U_k(s)f \d s, \qquad k \geq 0.$$
Notices that $\phi_k(t) \in \mathscr{W}$ for any $t > 0$ and any $k \geq 1$. One checks then easily thanks to Proposition \ref{propertiesUk} \textit{(7)} that, for any $n \geq 1$
 $$\sum_{k = 0}^{n}\cc\left(\phi_k(t)\right) = \sum_{k = 0}^{n-1} \cc_0\left(\phi_k(t)\right) + \int_{\Gamma_+}\B^+ \phi_n(s)\d\mu_+.$$
One deduces easily \eqref{bart2} from this last identity combined with \eqref{bart} and the fact that $\left(\sum_{k = 0}^{n}\phi_k(t)\right)_n$ converges to $\int_0^t V_H(s)f \d s$ in the graph norm of $\mathcal{A}$. In the same way, noticing that for any  $n\in\mathbb{ N}$ one has
$$\sum_{k = 0}^{n}\cc\left(g_k\right) = \sum_{k = 0}^{n-1} \cc_0\left(g_k\right) + \int_{\Gamma_+}\B^+g_n\d\mu_+,$$
one readily gets \eqref{barl2} using now \eqref{barla}  together with the fact that $\left(\sum_{k = 0}^{n}g_k\right)_n$ converges to $(\lambda - \mathcal{A})^{-1}f$ in the graph norm of $\mathcal{A}$ as $n \to \infty$ (see Corollary \ref{coruv}).
\end{proof}

  The above results yield the following

  \begin{proposition}  For any $f \in \D(\T_H)$ one has $\widehat{\cc}(f) = \cc(f)= \cc_0(f)$. Consequently,
\begin{equation}\label{corhon}\Theta(f)=0 \qquad \forall f \in \D\left(\,\overline{\T_H}\,\right).\end{equation}
\end{proposition}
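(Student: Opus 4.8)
The plan is to establish the identity on $\D(\T_H)$ first and then pass to $\D(\overline{\T_H})$ by continuity. Since the paragraph preceding Proposition~\ref{prop4} already shows that $\cc$ and $\cc_0$ agree on $\D(\T_H)\subseteq\mathscr{W}$ (through the Green formula \eqref{greenform}), everything reduces to proving $\widehat{\cc}(\phi)=\cc_0(\phi)$ for every $\phi\in\D(\T_H)$, that is $\Theta(\phi)=0$ on $\D(\T_H)$. Granting this, \eqref{corhon} is immediate: $\Theta$ is continuous for the graph norm of $\A$, $\D(\T_H)$ is dense in $\D(\overline{\T_H})$ for the graph norm of $\overline{\T_H}$, and on $\D(\overline{\T_H})$ these two graph norms coincide because $\A$ is a closed extension of $\overline{\T_H}$ acting as $\T_{\mathrm{max}}$.

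To get $\widehat{\cc}(\phi)=\cc_0(\phi)$, fix $\lambda>0$ and $\phi\in\D(\T_H)$, set $g:=\lambda\phi-\T_{\mathrm{max}}\phi\in X$ so that $\phi=(\lambda-\A)^{-1}g$, and put $\psi:=\B^+\phi$, which lies in $L^1_+$ precisely because $\phi\in\D(\T_H)$. By \cite[Theorem 3.2]{abl2}, used exactly as in the proof of Proposition~\ref{th3}\,(iii), one has $\phi=C_\lambda g+\Xi_\lambda\B^-\phi=C_\lambda g+\Xi_\lambda H\psi$; applying $\B^+$ and invoking Lemma~\ref{lemmaML} ($\B^+C_\lambda g=G_\lambda g$ and $\B^+\Xi_\lambda u=M_\lambda u$) yields the key relation $(I-M_\lambda H)\psi=G_\lambda g$ in $L^1_+$, hence
\[
(M_\lambda H)^k G_\lambda g=(M_\lambda H)^k\psi-(M_\lambda H)^{k+1}\psi\qquad(k\ge 0).
\]
Substituting this into the series identity \eqref{barla} applied with $g$ in place of $f$, the $k$-th summand of $\widehat{\cc}(\phi)$ becomes $(p_k-q_k)-(p_{k+1}-q_{k+1})$, where $p_k:=\int_{\Gamma_+}(M_\lambda H)^k\psi\,\d\mu_+$ and $q_k:=\int_{\Gamma_-}H(M_\lambda H)^k\psi\,\d\mu_-$ are finite (the iterates remaining in $L^1_+$, resp. $L^1_-$). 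Thus the $N$-th partial sum telescopes to $(p_0-q_0)-(p_{N+1}-q_{N+1})$, and since $p_0-q_0=\int_{\Gamma_+}\B^+\phi\,\d\mu_+-\int_{\Gamma_-}H\B^+\phi\,\d\mu_-=\cc_0(\phi)$, the whole matter reduces to showing $p_N-q_N\to 0$.

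For $\psi\ge 0$ every iterate $(M_\lambda H)^k\psi$ and $H(M_\lambda H)^k\psi$ is nonnegative, so $p_k,q_k$ are $L^1$-norms; using $\|H\|_{\Be(L^1_+,L^1_-)}=1$ together with the (well-known) contractivity $\|M_\lambda\|_{\Be(L^1_-,L^1_+)}\le 1$, which follows from $\int_{\Gamma_+}M_\lambda v\,\d\mu_+=\int_{\Gamma_-\cap\{\tau_+<\infty\}}v\,e^{-\lambda\tau_+}\,\d\mu_-$, one gets $p_{k+1}\le q_k\le p_k$. Hence $(p_k)_k$ is non-increasing, so convergent, and $(q_k)_k$ is squeezed to the same limit, giving $p_k-q_k\to 0$. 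For a general (signed) $\psi\in L^1_+$, I would decompose $\psi=\psi^{+}-\psi^{-}$ into its nonnegative parts, still both in $L^1_+$, apply the previous case to each, and conclude $p_k-q_k=(p_k^{+}-q_k^{+})-(p_k^{-}-q_k^{-})\to 0$. This yields $\widehat{\cc}(\phi)=p_0-q_0=\cc_0(\phi)$, hence $\Theta(\phi)=\cc(\phi)-\widehat{\cc}(\phi)=0$ on $\D(\T_H)$, which closes the argument via the density/continuity step above.

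I expect the only genuinely delicate point to be the reduction from the nonnegative to the signed case: it works because for $\phi\in\D(\T_H)$ the \emph{trace} $\B^+\phi$ already belongs to $L^1_+$, so it — rather than $\phi$, for which one has no lattice structure — is the object that gets split into positive and negative parts. The monotone-convergence argument for $(p_k)$ and the identification $p_0-q_0=\cc_0(\phi)$ are then routine, and the only external ingredient is the $L^1$-contractivity of $M_\lambda$, which I would simply quote from \cite{abl1,abl2} rather than re-derive the change of variables along characteristics.
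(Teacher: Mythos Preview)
Your argument is correct, and it takes a genuinely different route from the paper's. Both proofs hinge on the telescoping identity $(I-M_\lambda H)\psi=G_\lambda g$ with $\psi=\B^+\phi\in L^1_+$, but they dispose of the remainder differently. The paper works with the $\Theta$-representation \eqref{barl2}: since that limit exists for every $f\in X$ and $G_\lambda$ is surjective, $\lim_n\int_{\Gamma_+}(M_\lambda H)^n h\,\d\mu_+$ exists for \emph{every} $h\in L^1_+$; applying this with $h=\psi$ and telescoping shows that the series $\sum_k\int_{\Gamma_+}(M_\lambda H)^k G_\lambda g\,\d\mu_+$ converges, so its general term --- which is exactly $\Theta(\phi)$ by \eqref{barl2} --- vanishes. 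You instead work directly with the $\widehat{\cc}$-series \eqref{barla}, telescope the two-term summands into $(p_0-q_0)-(p_{N+1}-q_{N+1})$, and kill the remainder via the elementary squeeze $p_{k+1}\le q_k\le p_k$ (from $\|M_\lambda\|_{\Be(L^1_-,L^1_+)}\le 1$ and $\|H\|\le 1$) plus Jordan decomposition of $\psi$. Your route is more self-contained --- it invokes neither \eqref{barl2} nor the surjectivity of $G_\lambda$, only the contractivity of $M_\lambda$ and $H$ --- whereas the paper's route avoids the positive/negative splitting of $\psi$ by leaning on the corollary machinery already in place.
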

\begin{proof} By definition and since $\Theta$ is continuous over $\D(\A)$ endowed with the graph norm, it is enough to prove that $\widehat{\cc}(f)=\cc(f)$ for any $f \in \D(\T_H)$. For any $\lambda > 0$, since the operator $G_{\lambda}: X \rightarrow L^1_+$ is surjective, one deduces from \eqref{barl2} that the limit $\lim_{n\to\infty}\int_{\Gamma_+} \left(M_\lambda H\right)^n h \d\mu_+$ exists and is finite for any $h\in L^1_+$. Now, given $f \in \D(\T_H)$, set $g = (\lambda - \mathcal{A})f$. Since $\B^+f \in L^1_+$ the limit  $\lim_{n\to\infty} \int_{\Gamma_+}(M_ {\lambda}H)^n \B^+f \d\mu_+$ exists and is finite while, from $f = (\lambda - \mathcal{A})^{-1}g$  one deduces that $$\sum_{k = 0}^{n-1}(M_ {\lambda}H)^kG_{\lambda}g = \B^+f - (M_{\lambda}H)^n\B_+f.$$ Therefore, the sequence $\displaystyle\left(\sum_{k = 0}^{n-1}\int_{\Gamma_+}(M_ {\lambda}H)^kG_{\lambda}g \d\mu_+\right)_{n}$ is converging. In particular,
$$\lim_{n\to \infty}\int_{\Gamma_+}(M_ {\lambda}H)^nG_{\lambda}g \d\mu_+ = 0.$$ From \eqref{barl2}, this limit coincides with $\Theta\left((\lambda-\A)^{-1}g\right)=\Theta(f)$ which shows the result.\end{proof}

Now one proves that,  somehow, \eqref{corhon} is a characterization of $\D(\overline{\T_H})$, at least for nonnegative $f$ :
 \begin{proposition}
If $f \in \D(\A)_+$ is such that $\Theta(f)=0$, then, $f \in \D(\,\overline{\T_H}\,).$
  \label{prophon}
\end{proposition}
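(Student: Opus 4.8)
The plan is to prove the converse implication by exploiting the characterization of $\Theta$ furnished by \eqref{barl2} together with the expansion \eqref{resoA} of the resolvent. Fix $f \in \D(\A)_+$ with $\Theta(f) = 0$ and a parameter $\lambda > 0$; set $g := (\lambda - \A)f \in X$. The idea is that the hypothesis $\Theta(f) = 0$ forces the "boundary mass at infinity" to vanish, which should pin down $\B^+ f \in L^1_+$, and then Theorem \ref{limiting} identifies the elements of $\D(\A)$ with trace in $L^1_+$ as precisely those of $\D(\T_H)$ — but here we want $\D(\overline{\T_H})$, so a limiting/approximation argument is needed rather than landing directly in $\D(\T_H)$.

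First I would reduce to nonnegative $g$. Write $g = g^+ - g^-$, put $f_1^\pm = (\lambda - \A)^{-1} g^\pm \in \D(\A)_+$, so $f = f_1^+ - f_1^-$ and, by positivity of the functionals, $0 \le \Theta(f_1^+) , \Theta(f_1^-)$ and, crucially, monotonicity-type bounds relating $\Theta(f)$ to $\Theta(f_1^\pm)$ need to be extracted — this is where one must be careful, since $f$ is not assumed to be $(\lambda-\A)^{-1}$ of a nonnegative element. The clean route is instead: since $\Theta$ is positive and continuous in the graph norm, and since $\Theta \ge 0$, the condition $\Theta(f) = 0$ at a nonnegative $f$ is a genuine extremality condition. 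I would use \eqref{barl2}: $\Theta(f) = \Theta((\lambda-\A)^{-1}g) = \lim_n \int_{\Gamma_+} (M_\lambda H)^n G_\lambda g\, \d\mu_+$. For nonnegative $g$ the partial sums $\sum_{k=0}^{n-1}(M_\lambda H)^k G_\lambda g = \B^+ f - (M_\lambda H)^n \B^+ f$ would require $\B^+ f \in L^1_+$ already, which we do not have a priori; so instead I would work directly with the telescoping in $Y_+$ and use that all terms $(M_\lambda H)^k G_\lambda g \ge 0$, so $\sum_{k=0}^\infty (M_\lambda H)^k G_\lambda g$ converges in $Y_+$ (monotone) with $\Theta(f)$ equal to the limit of the tail, hence $\Theta(f) = 0$ means $\sum_{k=0}^\infty \int_{\Gamma_+} (M_\lambda H)^k G_\lambda g\, \d\mu_+ < \infty$, i.e. $\sum_k (M_\lambda H)^k G_\lambda g \in L^1_+$.

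Next, from $f = (\lambda-\A)^{-1} g = C_\lambda g + \sum_{n=0}^\infty \Xi_\lambda H (M_\lambda H)^n G_\lambda g$ (formula \eqref{resoA}), one reads off $\B^+ f = \sum_{n=0}^\infty (M_\lambda H)^n G_\lambda g$ in $Y_+$ — and the previous step has just shown this sum lies in $L^1_+$. Therefore $\B^+ f \in L^1_+$, and by the last assertion of Theorem \ref{limiting}, $f \in \D(\A) \cap \mathscr{W} = \D(\T_H)$. Wait — this would prove the \emph{stronger} statement $f \in \D(\T_H) \subseteq \D(\overline{\T_H})$; I expect the actual situation to be subtler, namely that $\Theta(f) = 0$ only gives convergence of $\sum_k (M_\lambda H)^k G_\lambda g$ in a weaker sense (the $L^1_+$-\emph{norms} of the partial sums are bounded, giving weak-$*$ or a.e. control but not norm convergence), so that one only obtains $f$ as a graph-norm limit of elements $f_N := C_\lambda g + \sum_{n=0}^{N} \Xi_\lambda H (M_\lambda H)^n G_\lambda g$ which individually satisfy $\B^+ f_N \in L^1_+$, hence $f_N \in \D(\T_H)$, whence $f \in \D(\overline{\T_H})$.

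The main obstacle, then, is precisely controlling the sense in which the partial sums $\sum_{n=0}^{N} \Xi_\lambda H (M_\lambda H)^n G_\lambda g$ converge to $f$ in the graph norm of $\A$: one knows the $X$-norms converge (the Dyson–Phillips series $\sum U_k$ converges, Theorem \ref{theoDP}, and Corollary \ref{coruv} gives graph-norm convergence of the resolvent series), and one must upgrade the boundedness of $\|\B^+ f_N\|_{L^1_+}$ coming from $\Theta(f) = 0$ into a statement allowing $f_N$ to be genuine approximants in $\D(\T_H)$ of $f$. I would handle this by invoking Corollary \ref{coruv}: the sum $\sum_{k=0}^N g_k$ with $g_k = \int_0^\infty e^{-\lambda t} U_k(t) f\,\d t$ already converges to $(\lambda-\A)^{-1}(\text{relevant datum})$ in the graph norm of $\A$; each $g_k$ for $k \le N$ has trace in $L^1_+$ (Proposition \ref{propertiesUk}(6)), so each partial sum $\sum_{k=0}^N g_k$ lies in $\D(\T_H)$, and the hypothesis $\Theta(f) = 0$ via \eqref{barl2} is exactly what guarantees the limit stays in the closure $\D(\overline{\T_H})$ — in fact it ensures no "defect" survives in the limit. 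Assembling these, $f \in \D(\overline{\T_H})$ follows, completing the characterization.
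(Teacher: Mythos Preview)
Your proposal contains a genuine gap at the crucial step. You claim that each partial sum $S_N := \sum_{k=0}^N g_k$ (with $g_k = \int_0^\infty e^{-\lambda t}U_k(t)h\,\d t$) lies in $\D(\T_H)$. This is false. From Proposition~\ref{propertiesUk}(6) one has $\B^- g_0 = 0$ and $\B^- g_k = H\B^+ g_{k-1}$ for $k\ge 1$, so
\[
\B^- S_N = H\sum_{k=0}^{N-1}(M_\lambda H)^k G_\lambda h = H\B^+ S_{N-1},
\]
which differs from $H\B^+ S_N$ by exactly $H(M_\lambda H)^N G_\lambda h$. Thus $S_N \notin \D(\T_H)$ unless this defect vanishes. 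The paper's proof repairs this by subtracting a correction $\varphi_n \in \mathscr W$ (furnished by \cite[Proposition~2.3]{abl2}) with $\B^-\varphi_n = 0$, $\B^+\varphi_n = (M_\lambda H)^n G_\lambda f$, so that $u_n := S_n - \varphi_n$ genuinely lies in $\D(\T_H)$; the norms $\|\varphi_n\|_X$ and $\|\T_{\max}\varphi_n\|_X$ are controlled by $\|(M_\lambda H)^n G_\lambda f\|_{L^1_+}$, which must therefore be made small.

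This exposes two further problems in your outline. First, formula \eqref{barl2} tells you only that the \emph{terms} $\int_{\Gamma_+}(M_\lambda H)^n G_\lambda g\,\d\mu_+$ tend to a limit; when that limit is zero it does \emph{not} follow that $\sum_k (M_\lambda H)^k G_\lambda g$ converges in $L^1_+$ (your ``hence $\sum_k\ldots < \infty$'' is a non sequitur), so the route to $\B^+f \in L^1_+$ fails. Second, there is a mismatch in what is being approximated: with $h = f$ the sums $S_N$ converge in graph norm to $(\lambda-\A)^{-1}f$, not to $f$; with $h = g := (\lambda-\A)f$ they converge to $f$, but then $g$ has no sign and the positivity arguments you invoke collapse. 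The paper resolves both issues simultaneously by passing through the approximation $\lambda(\lambda-\A)^{-1}f \to f$ in graph norm as $\lambda\to\infty$: this keeps the input nonnegative (it is $f$ itself), replaces the target $f$ by $\lambda(\lambda-\A)^{-1}f$ which the corrected sums $\lambda u_n$ do approximate, and uses $\Theta(f)=0$ only to make $\Theta(\lambda(\lambda-\A)^{-1}f)$---hence the individual term $\|(M_\lambda H)^n G_\lambda f\|_{L^1_+}$ for suitable $\lambda,n$---small, which is exactly what controls $\varphi_n$.
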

 \begin{proof} The proof is inspired by the analogous result for additive perturbation \cite[Proposition 1.6]{mmv}. Let $f \in \D(\A)_+$ be given such that $\Theta(f)=0$, i.e. $\widehat{\cc}(f)=\cc(f)$. Since $\lambda(\lambda-\mathcal{A})^{-1}f \to f$ in the graph norm of $\D(\A)$ as $\lambda \to \infty$, we get that
 $$\lim_{\lambda \to \infty} \Theta\left(\lambda(\lambda-\A)^{-1}f\right)=\Theta(f)=0.$$
 Now, since
 $$\Theta\left(\lambda(\lambda-\A)^{-1}f\right)=\lim_{n \to \infty} \lambda \int_{\Gamma_+} \left(M_\lambda H\right)^n G_\lambda f\d\mu_+$$
 we see that, for any $\varepsilon > 0$ we can find $\lambda > 1$ and $N  \geq 1$  such that
\begin{multline}
\|\lambda(\lambda - \mathcal{A})^{-1}\T_\mathrm{max}f - \T_\mathrm{max}f\|_X< \varepsilon \,;\quad \|\lambda(\lambda - \mathcal{A})^{-1}f - f\|_X< \varepsilon\\
\text{ and } \int_{\Gamma_+} \left(M_\lambda H\right)^n G_\lambda f\d\mu_+ < \frac{\varepsilon}{\lambda}\qquad \forall n \geq N.
 \end{multline}
For such $\lambda >1$ and $N \geq 1$, we construct a sequence $(\varphi_n)$ in $\mathscr{W}$ with the following properties
 \begin{multline*}
\B^-\varphi_n = 0; \quad \B^+ \varphi_n = (M_ {\lambda}H)^nG_{\lambda}f\,;\,\quad \|\varphi_n\|_X\leq \|(M_ {\lambda}H)^nG_{\lambda}f\|_{L^1_+}\\
 \quad \text{ and } \qquad \|\T_\mathrm{max} \varphi_n\|_X\leq \|(M_ {\lambda}H)^nG_{\lambda}f\|_{L^1_+}.\end{multline*}
 The existence of  such a sequence is ensured by \cite[Proposition 2.3]{abl2}. Then, for any $n \geq 1$, we set
$$u_n = C_{\lambda}f + \sum_{k=0}^{n-1}\Xi_{\lambda}H(M_{\lambda}H)^kG_{\lambda}f - \varphi_n.$$
Clearly, $u_n\in \D(\T_\mathrm{max})$ for any $n \geq 1$ with
\begin{multline*}
\T_\mathrm{max}u_n = \lambda\left(C_{\lambda}f + \sum_{n=0}^{n-1}\Xi_{\lambda}H(M_{\lambda}H)^nG_{\lambda}f \right) - f - \T_\mathrm{max} \varphi_n;\\
 \B^+ u_n = \sum_{k=0}^{n-1}(M_{\lambda}H)^kG_{\lambda}f\quad \text{ and } \quad \B^- u_n = \sum_{k=0}^{n-1}H(M_{\lambda}H)^kG_{\lambda}f = HB^+ u_n\end{multline*}
i.e. $u_n\in \D(\T_H)$ for all $n \geq 1$. Considering that  $(\lambda - \mathcal{A})^{-1}f = C_{\lambda}f + \sum_{k=0}^{\infty}\Xi_{\lambda}H(M_{\lambda}H)^kG_{\lambda}f$, we can choose $n \geq N$ such that
$\left\|(\lambda - \mathcal{A})^{-1}f - C_{\lambda}f - \sum_{k=0}^{n-1}\Xi_{\lambda}H(M_{\lambda}H)^kG_{\lambda}f\right\|_X < \dfrac{\varepsilon}{\lambda^2}.$
With such choice, since $\lambda(\lambda - \mathcal{A})^{-1}\T_\mathrm{max}f = \lambda^2(\lambda - \mathcal{A})^{-1}f - \lambda f$, we check that
\begin{multline*}\|\lambda u_n - f\|_X \leq \|\lambda (u_n - (\lambda - \mathcal{A})^{-1}f)\|_X + \|\lambda(\lambda - \mathcal{A})^{-1}f - f\|_X
   < 4\varepsilon\\
\text{ and } \qquad \|\lambda \T_\mathrm{max} u_n - \T_\mathrm{max} f\|_X  <  4\varepsilon.\end{multline*}
Since $\lambda u_n \in \D(\T_H)$, this shows that $f\in \mathscr{D}({\,\overline{\T_H} }\,)$.
 \end{proof}

 \subsection{Honesty criteria}
\label{sec5}

Here we want to improve the honesty theory developed in \cite {mmk}. First of all we adapt the definition of honesty, established in the additive perturbation framework in \cite{mmv, almm}.

\begin{definition}\label{defihonest} Let $f\in X_+$ be given. Let $J \subseteq [0,\infty)$ be an interval. Then, the trajectory $(V_H(t)f)_{t\geq 0}$ is said to be honest on $J$ if
$$\left\|V_H(t)f\right\|_X = \left\|V_H(s)f\right\|_X - \widehat{\cc}\left(\int_s^t V_H(r)f \d r\right), \quad \quad \forall \quad s,t\in J, s\leq t.$$
The trajectory is said to be honest if it is honest on $[0,\infty)$. The whole $C_0$-semigroup $(V_H(t))_{t\geq 0}$ will be said honest if all the trajectories are honest.
\label{defhon}
\end{definition}

In the following, we establish thanks to the representation series \eqref{sum} an approach to honesty on subinterval $J \subseteq [0,\infty)$ which is completely new in the context of boundary perturbation. The proof is inspired by the recent similar results obtained in the additive perturbation framework thanks to Dyson-Phillips series (see the concept of so-called 'mild honesty' in \cite[Section 4]{almm}). More precisely, we have the following honesty criteria, analogous to \cite[Theorem 4.8]{almm}:

 \begin{theorem}  \label{CNSH1} Given $f\in X_+$ and $J \subseteq [0,\infty)$, the following statements are equivalent
\begin{enumerate}[1)\:]
\item the trajectory $(V_H(t)f)_{t\geq 0}$ is honest on $J$;\\

\item $\displaystyle \lim_{n\to \infty}\left\|\B^+\int_s^t U_n(r)f \d r\right\|_{L^1_+} = 0$ for any $s,t\in J, s\leq t$;\\

\item $\displaystyle \int_s^t V_H(r)f \d r \in \D(\overline{\T_H}\,)$ for any $s,t\in J, s\leq t$;\\

\item the set $\displaystyle \left(\B^+\int_s^t U_n(r)f \d r\right)_n$ is relatively weakly compact in $L^1_+$ for any $s,t\in J, s\leq t$.
 \end{enumerate}
 \end{theorem}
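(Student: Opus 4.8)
The plan is to prove the chain of equivalences $1)\Rightarrow 2)\Rightarrow 3)\Rightarrow 1)$ together with $2)\Leftrightarrow 4)$, using as the central bookkeeping identity the norm formula \eqref{sumnorm} from Proposition \ref{propertiesUk} (7) applied on each subinterval, and the functional identities from Corollary \ref{corol1} and the Corollary preceding Proposition \ref{prophon}.

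\medskip

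\textbf{Step 1 (reformulating honesty in terms of $\Theta$).} First I would combine Definition \ref{defihonest} with the semigroup relation $V_H(t)f = \sum_k U_k(t)f$ and the norm formula \eqref{sumnorm}, using the semigroup law $U_k(t+s)f=\sum_{j=0}^k U_j(t)U_{k-j}(s)f$ (Proposition \ref{propertiesUk} (4)) to localise \eqref{sumnorm} on an arbitrary subinterval $[s,t]$. Applying \eqref{sumnorm} to $V_H(s)f$ in place of $f$ and passing to the limit $n\to\infty$, and recalling from \eqref{bart} that $\sum_k\cc_0\left(\int_s^t U_k(r)f\,\d r\right)=\widehat{\cc}\left(\int_s^t V_H(r)f\,\d r\right)$, one obtains
$$\|V_H(t)f\|_X = \|V_H(s)f\|_X - \widehat{\cc}\left(\int_s^t V_H(r)f\,\d r\right) - \lim_{n\to\infty}\left\|\B^+\int_s^t U_n(r)f\,\d r\right\|_{L^1_+}.$$
The point is that the last limit always exists (it is a decreasing limit of nonnegative quantities, by \eqref{bounded} applied on $[s,t]$, and it equals $\Theta\left(\int_s^t V_H(r)f\,\d r\right)$ by \eqref{bart2}). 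This identity immediately gives $1)\Leftrightarrow 2)$: honesty on $J$ means precisely that this nonnegative defect vanishes for all $s\le t$ in $J$.

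\medskip

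\textbf{Step 2 ($2)\Leftrightarrow 3)$).} From Step 1 the quantity in $2)$ equals $\Theta\left(\int_s^t V_H(r)f\,\d r\right)$, and since $\int_s^t V_H(r)f\,\d r\in\D(\A)_+$, the equivalence $\Theta(g)=0 \Leftrightarrow g\in\D(\overline{\T_H})$ for $g\in\D(\A)_+$ is exactly the content of \eqref{corhon} together with Proposition \ref{prophon}. Thus $2)$ on $J$ is equivalent to $\int_s^t V_H(r)f\,\d r\in\D(\overline{\T_H})$ for all $s\le t$ in $J$, which is $3)$.

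\medskip

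\textbf{Step 3 ($2)\Leftrightarrow 4)$, the weak-compactness criterion).} This is where the $L^1$ structure is essential and where I expect the main technical work. One implication is trivial: if the $L^1_+$-norm of the sequence $\B^+\int_s^t U_n(r)f\,\d r$ tends to $0$ then the sequence is norm-convergent, hence relatively weakly compact. For the converse I would argue that the sequence $w_n:=\B^+\int_s^t U_n(r)f\,\d r$ is, up to the bounded monotone correction coming from \eqref{sumnorm}, a ``telescoping remainder'': using Lemma \ref{lemmeHBint} and \eqref{B^+int} one controls $w_{n+1}$ pointwise by $M_?$-type transport of $H w_n$, so the $w_n$ form a decreasing-in-mass, essentially disjointly-supported-in-time family; by the Dunford--Pettis theorem a relatively weakly compact sequence in $L^1_+$ has uniformly absolutely continuous integrals, which forces the masses $\|w_n\|_{L^1_+}$ to converge to the same limit they would have if the sequence converged strongly, namely $0$ (here one uses that $\sum_n$ of related quantities is summable, by \eqref{bounded}, so a weakly convergent subsequence must have norm limit $0$). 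I would model this step closely on \cite[Theorem 4.8]{almm}, transcribing the additive-perturbation argument via the dictionary $U_k(t)\leftrightarrow$ Dyson--Phillips iterates and $\B^+\int_0^t U_k \leftrightarrow$ the relevant boundary traces.

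\medskip

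The main obstacle is precisely Step 3: establishing that relative weak compactness of the trace sequence upgrades to norm convergence to zero. The delicate point is that $L^1_+$ is non-reflexive, so weak compactness is genuine extra information (Dunford--Pettis), and one must exploit the specific recursive structure \eqref{Uk(t)}--\eqref{B^+int} of the iterates — in particular the almost-disjointness in the time variable encoded in Remark \ref{rem:B+z} and the mass inequality \eqref{bounded} — to convert "no mass escapes to infinity weakly" into "no mass survives in the tail of the series". Once this is in place, the equivalences close up as indicated.
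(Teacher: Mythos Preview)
Your Steps 1 and 2 are correct and match the paper's argument: the equivalence $1)\Leftrightarrow 2)$ comes directly from \eqref{bart2} (the identity you display in Step 1 is exactly $\cc=\widehat{\cc}+\Theta$ evaluated on $\int_s^t V_H(r)f\,\d r$ combined with \eqref{bart2}), and $2)\Leftrightarrow 3)$ is indeed \eqref{corhon} plus Proposition \ref{prophon} applied to the nonnegative element $\int_s^t V_H(r)f\,\d r\in\D(\A)_+$.

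Step 3 is where your proposal diverges from the paper and where there is a genuine gap. Your plan --- Dunford--Pettis plus ``almost-disjointness in time'' plus the summability in \eqref{bounded} --- does not lead anywhere concrete as written: uniform integrability by itself does not force $\|w_n\|_{L^1_+}\to 0$; the ``disjoint-support-in-time'' heuristic has no obvious meaning for the traces $w_n$, which live on $\Gamma_+$ and not on a time axis; and \eqref{bounded} gives summability of $\|U_k(t)f\|_X$, not of $\|w_n\|_{L^1_+}$. The paper's argument for $4)\Rightarrow 2)$ uses a different, quite specific mechanism that your sketch omits entirely: by the trace estimate of Theorem \ref{theotrace},
\[
\|w_n\|_{Y_+}\;\leq\;\Bigl\|\int_s^t U_n(r)f\,\d r\Bigr\|_X+\|U_n(t)f\|_X+\|U_n(s)f\|_X,
\]
and the right-hand side tends to $0$ because the series $\sum_n \int_s^t U_n(r)f\,\d r$, $\sum_n U_n(t)f$, $\sum_n U_n(s)f$ all converge in $X$. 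Hence $w_n\to 0$ in the \emph{weaker} space $Y_+=L^1(\Gamma_+,\d\xi_+)$. On each set $\Gamma_{i,+}=\{\z:\tau_-(\z)>1/i\}$ one has $\d\xi_+\geq \tfrac{1}{i}\,\d\mu_+$, so $\int_{\Gamma_{i,+}} w_n\,\d\mu_+\to 0$; therefore any weak-$L^1_+$ cluster point $g$ of $(w_n)$ vanishes on every $\Gamma_{i,+}$, i.e.\ $g=0$. Testing weak convergence of a subsequence against the constant $1\in L^\infty(\Gamma_+,\d\mu_+)$ then gives $\|w_{n_k}\|_{L^1_+}\to 0$, and since the full sequence $\|w_n\|_{L^1_+}$ is already known to converge (this is \eqref{bart2}), its limit is $0$. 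The ingredient missing from your plan is precisely this two-norm comparison via $Y_+$; neither Lemma \ref{lemmeHBint} nor Dunford--Pettis is actually used.
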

 \begin{proof} Let $f \in X_+$, $J \subseteq [0,\infty)$ and  $s,t\in J, s\leq t$ be given. Recall that
 $$\cc\left(\int_s^t V_H(r)f \d r\right) = \left\|V_H(s)f\right\|_X - \left\|V_H(t)f\right\|_X.$$
 so that, according to  Definition \ref{defhon},  the trajectory $(V_H(t)f)_{t\geq 0}$ is honest on $J$ if and only if
 $$\Theta\left(\int_s^t V_H(r)f \d r\right) = 0 \quad \forall s,t\in J, s\leq t.$$
 According to \eqref{bart2}, this is equivalent to \textit{2)}, i.e. \textit{1)} $\Leftrightarrow$ \textit{2)}. Since moreover $\int_s^t V_H(r)f \d r \in {\D(\A)_+}$, statements \textit{1)} and  \textit{3)} are equivalent by virtue of Corollary \ref{corhon} and Proposition \ref{prophon}. Clearly  \textit{2)} implies  \textit{4)}. Assume now that the set $\left(\B^+\int_s^t U_n(r)f \d r\right)_n$ is relatively weakly compact in $L^1_+$. Let us show that  $\lim_{n \to \infty}\|\B^+\int_s^t U_n(r)f \d r\|_{L^1_+} = 0$. According to \eqref{bart2}, the limit
$$\lim_{n \to \infty}\left\|\B^+\int_s^t U_n(r)f \d r\right\|_{L^1_+}:= \ell(s,t)$$
exists.
  By Theorem \ref{theotrace}, we also have
 \begin{equation*}\begin{split}
 \left\|\B^+\int_s^t U_n(r)f \d r\right\|_{Y_+} &\leq \left\|\int_s^t U_n(r)f \d r\right\|_{X} + \left\|\T_\mathrm{max} \int_s^t U_n(r)f \d r\right\|_{X}\\
  &= \left\|\int_s^t U_n(r)f \d r\right\|_{X} + \left\|U_n(t)f\right\|_X  - \left\|U_n(s)f\right\|_X,\end{split}\end{equation*}
 and, since  the series $\sum_n \int_s^t U_n(r)f \d r$, $\sum_n U_n(s)f$ and $\sum_n U_n(t)f$ are converging (towards $\int_s^t V_H(r)f \d r$, $V_H(s)f$ and $V_H(t)f$ respectively), one deduces that the right-hand-side is converging to $0$ as $n \to \infty$ and
 \begin{equation}\label{Y+B+}
 \lim_{n\to \infty}\left\|\B^+\int_s^t U_n(r)f \d r\right\|_{Y_+} = 0.
\end{equation}
Now, by assumption \textit{4)}, there exists  a subsequence $\left(\B^+\int_s^t U_{n_k}(r)f \d r\right)_k$ which converges weakly  to, say, $g_{s,t} \in L^1_+$. For any $i \in \mathbb{N}$ we set $\Gamma_{i,+} = \{z \in \Gamma_+\,:\, \tau_-(\z) > \frac{1}{i}\}$ and denote by $\chi_i$ the characteristic function of the set $\Gamma_{i,+}$. Then for any $i \in \mathbb{N}$, the limit
$$  \lim_{k\to \infty}\int_{\Gamma_{i,+}}\B^+\int_s^t U_{n_k}(r)f \d r \d\mu_+ =\lim_{k\to \infty}\int_{\Gamma_{+}} \chi_i(\z)\left(\B^+\int_s^t U_{n_k}(r)f \d r\right)(\z)\d\mu_+(\z) = \int_{\Gamma_{i,+}}g_{s,t}\d\mu_+.$$
Thus, from \eqref{Y+B+},
$$\lim_{k \to \infty} \int_{\Gamma_{i,+}} \B^+\int_s^t U_{n_k}(r)f \d r\,\d\xi_+ = 0 \qquad \forall i \in \mathbb{N}.$$
Since,  for any fixed $i \in N$ and any $\z \in\Gamma_{i,+}$ one has $\d\xi_+(\z) \geq \frac{1}{i}\d\mu_+(\z)$ so that
$$\int_{\Gamma_{i,+}}g_{s,t}\d\mu_+ = 0.$$
 Since $g_{s,t}$ is nonnegative on $\Gamma_+ = \bigcup_{i=1}^{\infty}\Gamma_{i,+}$, we deduce that  $g_{s,t}(\z) = 0$ for $\mu_+$-almost every $\z\in \Gamma_{+}$. In other words, the unique possible weak limit is $g_{s,t}=0$ and therefore $\ell(s,t)=0$, i.e. \textit{2)} holds.\end{proof}
\begin{remark}\label{C0rH1} We deduce directly from the above, with $J=[0,\infty)$ that  the $C_0$-semigroup $(V_H(t))_{t\geq 0}$ is honest if and only if $\lim_{n \to \infty}\|\B^+\int_0^t U_n(s)f \d s\|_{L^1_+} = 0$ for any $f\in X$ and $t > 0$.
 \end{remark}

\begin{remark} Recall that in \cite {mmv}, in the free-streaming case, the defect function $[0,\infty)\ni t \rightarrow \eta_{f}(t)$ has been  defined, for each fixed $f\in (\lambda - \mathcal{A})^{-1}X_+$, by $ \eta_{f}(t):= \|V_H(t)f\| - \|f\| + c_{\lambda}(\int_0^t V_H(s)f \d s)$. We have already observed (see Remark \ref{remmu}) that  $c_{\lambda}$ of \cite {mmv} corresponds to our functional $\widehat{\cc}$. Hence the defect function  can be defined for each fixed $f\in X_+$, as
$$\eta_{f}(t) = - \Theta \left(\int_0^t V_H(s)f \d s\right) = - \lim_{n\to \infty}\left\|\B^+\int_0^t U_n(s)f \d s\right\|_{L^1_+}.$$
 Such a representation of $\eta_{f}$ allows to deduce   immediately  that the mapping $t \rightarrow \eta_{f}(t)$ is nonpositive and nonincreasing. Moreover,  if the trajectory $(V_H(t)f)_{t\geq 0}$ is not honest then there exists $t_0 \geq 0$ such that $\eta_{f}(t) = 0$ for $0\leq t \leq t_0$ and $\eta_{f}(t) < 0$ for all $t > t_0$. Setting $g = V_H(t_0)f \in X_+$. Then for any $t > 0$ one has
\begin{multline*}
\eta_{f}(t + t_0) = -\Theta\left(\int_0^{t + t_0}V_H(s)f \d s\right) \\
= - \Theta \left(\int_0^{t_0}V_H(s)f \d s\right) - \Theta \left(\int_{t_0}^{t + t_0}V_H(s)f \d s\right) = - \Theta\left(\int_0^{t }V_H(s)g \d s\right) = \eta_{g}(t)< 0,\end{multline*}
i.e., with the terminology of \cite{mmv}, the trajectory $(V_H(t)g)_{t\geq 0}$ is immediately dishonest.
\end{remark}

For any subinterval $J \subseteq [0,\infty)$ we denote by
$$X_J:=\{f \in X_+\;;\;(V_H(t)f)_{t \geq 0} \text{ is honest on $J$  } \}$$
and, whenever $J=[0,\infty)$, we simply denote $X_h=X_{[0,\infty)}$ the set of initial positive data giving rise to honest trajectories. Moreover, arguing exactly as in \cite[Proposition 3.13]{almm}, one sees that $X_h$ is invariant under $(V_H(t))_{t \geq 0}$.
Moreover, arguing exactly as in \cite[Proposition 2.4]{mmv}, one has
\begin{proposition} For any subinterval $J \subseteq [0,\infty)$, one has $\widehat{X}_J:=\mathrm{span}(X_J)=X_J - X_J$ is a closed lattice ideal of $X$ whose positive cone is $X_J$. In particular, $\widehat{X_h}=\mathrm{span}(X_h)$ is a closed lattice ideal in $X$ which is invariant under $(V_H(t))_{t \geq 0}$ and $(\widehat{X}_h)_+=X_h.$
\end{proposition}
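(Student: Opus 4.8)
Following \cite[Proposition 2.4]{mmv}, the plan is to show that for each subinterval $J$ the set $X_J$ is a closed convex cone which is \emph{hereditary} (solid from below: $0\le g\le f\in X_J$ implies $g\in X_J$), and then to deduce the lattice-ideal structure of $\widehat{X}_J$ by elementary Banach-lattice manipulations. Fix $s\le t$ in $J$. For $f\in X_+$ I would set
$$\beta_{s,t}(f):=\lim_{n\to\infty}\left\|\B^+\int_s^t U_n(r)f\d r\right\|_{L^1_+},$$
which, by the $[s,t]$-analogue of \eqref{bart2} used in the proof of Theorem~\ref{CNSH1}, exists, is finite, and equals $\Theta\left(\int_s^t V_H(r)f\d r\right)\ge 0$; by Theorem~\ref{CNSH1}, $f\in X_J$ if and only if $\beta_{s,t}(f)=0$ for every pair $s\le t$ in $J$. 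Linearity and positivity of each $U_n(r)$ give, for $f_1,f_2\in X_+$, that the functions $\B^+\int_s^t U_n(r)f_i\d r$ are nonnegative and sum to $\B^+\int_s^t U_n(r)(f_1+f_2)\d r$; additivity of the $L^1_+$-norm on the positive cone then yields $\beta_{s,t}(f_1+f_2)=\beta_{s,t}(f_1)+\beta_{s,t}(f_2)$, and $\beta_{s,t}(\alpha f)=\alpha\,\beta_{s,t}(f)$ for $\alpha\ge 0$ is clear. Hence $X_J$ is a convex cone, so $\widehat{X}_J=\mathrm{span}(X_J)=X_J-X_J$. If $0\le g\le f$ with $f\in X_J$, positivity of $U_n(r)$ gives $0\le\B^+\int_s^t U_n(r)g\d r\le\B^+\int_s^t U_n(r)f\d r$, hence $0\le\beta_{s,t}(g)\le\beta_{s,t}(f)=0$ and $g\in X_J$; thus $X_J$ is hereditary.

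To see that $X_J$ is norm-closed, I would note that, for fixed $s\le t$, the map $f\mapsto\int_s^t V_H(r)f\d r$ is bounded from $X$ into $\D(\A)$ endowed with its graph norm, since $\int_s^t V_H(r)f\d r\in\D(\A)$ with $\A\int_s^t V_H(r)f\d r=V_H(t)f-V_H(s)f$ and $\|V_H(r)\|\le 1$. As $\Theta$ is continuous on $\D(\A)$ for the graph norm, $f\mapsto\beta_{s,t}(f)=\Theta(\int_s^t V_H(r)f\d r)$ is continuous on $X$. Hence if $f_k\in X_J$ and $f_k\to f$ in $X$ (so $f\ge 0$), then $\beta_{s,t}(f)=\lim_k\beta_{s,t}(f_k)=0$ for every admissible $s\le t$, and $f\in X_J$.

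The conclusion then follows by routine lattice arguments. Given $h\in\widehat{X}_J$, write $h=h_1-h_2$ with $h_i\in X_J$; then $0\le|h|\le h_1+h_2\in X_J$, so $|h|\in X_J$ by heredity, which already gives $(\widehat{X}_J)_+=\widehat{X}_J\cap X_+=X_J$. If moreover $|g|\le|h|$, heredity gives $|g|\in X_J$, so $g^{\pm}\le|g|$ forces $g^{\pm}\in X_J$ and $g=g^+-g^-\in\widehat{X}_J$; thus $\widehat{X}_J$ is a lattice ideal. Its closedness is inherited from that of $X_J$: if $h_k\to h$ with $h_k\in\widehat{X}_J$, then $h_k^{\pm}\le|h_k|\in X_J$ puts $h_k^{\pm}\in X_J$, and continuity of the lattice operations together with closedness of $X_J$ gives $h^{\pm}\in X_J$, whence $h\in\widehat{X}_J$. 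For $J=[0,\infty)$, $\widehat{X_h}=X_h-X_h$ is invariant under $(V_H(t))_{t\ge 0}$ because $X_h$ is (as already noted above), and $(\widehat{X}_h)_+=X_h$ is the case $J=[0,\infty)$ of what has just been shown. I expect the only delicate point to be the first step, namely pinning down the existence of $\beta_{s,t}$ and its $\Theta$-representation for a general integration window $[s,t]$ rather than $[0,t]$; everything else is bookkeeping.
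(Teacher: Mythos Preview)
Your proposal is correct and follows precisely the route the paper intends: the paper gives no proof of its own and simply refers to \cite[Proposition 2.4]{mmv}, which is exactly the template you have implemented. Your identification of the one nontrivial point---the $[s,t]$-analogue of \eqref{bart2}---is apt, and it follows by writing $\int_s^t=\int_0^t-\int_0^s$ and using additivity of the $L^1_+$-norm on nonnegative functions; alternatively, as you note, one may bypass the $U_n$-representation altogether and work directly with $\beta_{s,t}(f)=\Theta\big(\int_s^t V_H(r)f\,\d r\big)$, since linearity and positivity of $\Theta$ on $\D(\A)_+$ already give additivity and heredity.
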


We recall now that a positive semigroup $\left(T(t)\right)_{t \geq 0}$ in $X$ is said to be irreducible if there is no trivial closed ideal of $X$ (i.e. different from $X$ and $\{0\}$) which is invariant under $T(t)$ for all $t \geq 0$.  We have then the following to be compared to \cite[Theorem 19 \& Remark 20]{mmk}:
\begin{proposition}\label{quasiinterior} Let $g \in X_+$, $g\neq 0$ such that the trajectory $(V_H(t)g)_{t \geq 0}$ is honest.
\begin{enumerate}\item If $(V_H(t))_{t \geq 0}$ is irreducible then the \emph{whole} semigroup $(V_H(t))_{t \geq 0}$ is honest.
\item If $g$ is \emph{quasi-interior} then the \emph{whole} semigroup $(V_H(t))_{t \geq 0}$ is honest.\end{enumerate}
\end{proposition}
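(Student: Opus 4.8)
The plan is to exploit the closed lattice ideal structure provided by the preceding proposition together with the invariance of $\widehat{X_h}$ under the semigroup $(V_H(t))_{t\geq 0}$. By hypothesis there exists $g\in X_+$, $g\neq 0$, whose trajectory is honest, so $g\in X_h$ and hence $g$ belongs to the closed lattice ideal $\widehat{X_h}=\mathrm{span}(X_h)$, which is nontrivial since $g\neq 0$. The whole semigroup is honest precisely when $X_h=X_+$, equivalently when $\widehat{X_h}=X$; so in both cases the goal reduces to showing that the invariant closed ideal $\widehat{X_h}$ is all of $X$.

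For part \textit{(1)}, I would simply invoke irreducibility: $\widehat{X_h}$ is a closed ideal of $X$ invariant under $V_H(t)$ for all $t\geq0$, and it is nonzero because it contains $g\neq 0$; irreducibility then forces $\widehat{X_h}=X$, whence $X_h=(\widehat{X_h})_+=X_+$ and the whole semigroup is honest. For part \textit{(2)}, I would use the general Banach-lattice fact that a closed ideal containing a quasi-interior element must be the whole space: indeed, if $g$ is quasi-interior then by definition the ideal generated by $g$ is dense in $X$, so since $g\in\widehat{X_h}$ and $\widehat{X_h}$ is a \emph{closed} ideal, we get $\widehat{X_h}=X$, and again $X_h=X_+$. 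One should perhaps note first that, by the $V_H(t)$-invariance of $X_h$ and since the whole trajectory of $g$ is honest, it suffices to work with $g$ itself rather than with an orbit point.

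The main (and really the only) obstacle is making sure the two standard Banach-lattice facts are available in the precise form needed: first, that an irreducible positive semigroup admits no nontrivial invariant closed ideal — this is essentially the definition recalled just before the statement, so nothing is needed there; second, that a closed ideal containing a quasi-interior element coincides with $X$ — this is classical (see, e.g., the monographs on positive operators and Banach lattices) and follows directly from the definition of quasi-interiority as ``the principal ideal generated by $g$ is dense.'' Since $L^1(\O,\d\mu)=X$ is an $L^1$-space, all of this machinery applies without any subtlety, and the proof is genuinely short once the ideal $\widehat{X_h}$ has been identified via the preceding proposition.

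Thus the proof I envisage is, in outline: recall from the previous proposition that $\widehat{X_h}$ is a closed lattice ideal, invariant under the semigroup, with positive cone $X_h$; observe $g\in X_h\subseteq\widehat{X_h}$ and $g\neq 0$ so $\widehat{X_h}\neq\{0\}$; in case \textit{(1)} apply irreducibility to conclude $\widehat{X_h}=X$; in case \textit{(2)} apply the quasi-interior/dense-ideal fact to conclude $\widehat{X_h}=X$; and in either case deduce $X_h=(\widehat{X_h})_+=X_+$, i.e. every positive trajectory is honest, so the whole semigroup $(V_H(t))_{t\geq0}$ is honest. No nontrivial estimate enters.
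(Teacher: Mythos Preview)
Your proposal is correct and follows essentially the same approach as the paper's own proof: in both parts you identify $\widehat{X_h}$ as a nontrivial closed invariant ideal containing $g$, then invoke irreducibility in \textit{(1)} and the quasi-interior property in \textit{(2)} to conclude $\widehat{X_h}=X$, hence $X_h=X_+$. The paper's proof is in fact slightly terser than yours, but the argument is identical.
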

\begin{proof} Let $g \neq 0$ such that $(V_H(t)g)_{t \geq 0}$ is honest be given.

\noindent \textit{(1)} One has then $\widehat{X_h} \neq \{0\}$. Since $\widehat{X}_h$ is an ideal invariant under $(V_H(t))_{t \geq 0}$, if  $(V_H(t))_{t \geq 0}$ is irreducible, this shows that necessarily $\widehat{X}_h=X$ and, in particular, $X_+=X_h$.

\noindent \textit{(2)} If $g$ is quasi-interior, since $g \in \widehat{X_h}$ one has $\widehat{X_h}=X$ and the conclusion follows.
\end{proof}

We have the following practical criterion extending \cite[Theorem 2.1 \& Corollary 2.3]{mmk}
\begin{proposition}\label{propH} Assume that there exists some quasi-interior $h \in \lp$ such that
\begin{equation}\label{Hhy} H\,h(\Phi(\z,-\tau_-(\z))) \chi_{\{\tau_-(\z) < \infty\}} \leq h(\z) \qquad \text{ for almost every } \z \in \Gamma_+.\end{equation}
Then, the whole semigroup $(V_H(t))_{t \geq 0}$ is honest.
\end{proposition}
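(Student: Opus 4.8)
The plan is to exhibit a single \emph{quasi-interior} $g\in X_+$ whose trajectory $(V_H(t)g)_{t\ge 0}$ is honest, and then to invoke Proposition \ref{quasiinterior} \textit{(2)}. The heart of the matter is to choose $g$ so that the traces $\B^+\int_0^t U_k(s)g\,\d s$ are dominated, uniformly in $k\ge 1$ and in $t$ on bounded intervals, by the fixed function $h\in L^1_+$; relative weak compactness of these traces in $L^1_+$ will then follow from the Dunford--Pettis theorem, and honesty from Theorem \ref{CNSH1}. As candidate I would take
\[
g(\x)=h\big(\Phi(\x,\tau_+(\x))\big)\exp(-\tau_+(\x))\qquad\text{ for }\ \x\in\O_+,
\]
and let $g$ coincide on $\O_{+\infty}$ with some $\mu$-a.e.\ strictly positive element of $X$ (such an element exists since $\mu$ is $\sigma$-finite). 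Using the product identification of $\mu$ on $\O_+$ one checks $\int_{\O_+}g\,\d\mu\le\int_{\Gamma_+}h(\z)\big(1-\exp(-\tau_-(\z))\big)\,\d\mu_+(\z)\le\|h\|_{L^1_+}$, hence $g\in X_+$; and $g>0$ $\mu$-a.e., because $h>0$ $\mu_+$-a.e.\ ($h$ being quasi-interior) and the map $\x\mapsto\Phi(\x,\tau_+(\x))$ pulls $\mu_+$-null subsets of $\Gamma_+$ back to $\mu$-null subsets of $\O_+$, while $g$ is strictly positive on $\O_{+\infty}$ by construction. Thus $g$ is quasi-interior in $X$.

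Next I would prove, by induction on $k\ge 0$, that $\B^+\int_0^t U_k(s)g\,\d s\le h$ on $\Gamma_+$ for every $t>0$. The base case $k=0$ is a direct computation: for $\z\in\Gamma_+$ and $0<u<\tau_-(\z)$ the point $\Phi(\z,-u)$ belongs to $\O_+$ with forward exit $\z$ (so $\tau_+(\Phi(\z,-u))=u$), whence $g(\Phi(\z,-u))=h(\z)\exp(-u)$ --- note that here the part of $g$ carried by $\O_{+\infty}$ plays no role, backward characteristics issued from $\Gamma_+$ never meeting $\O_{+\infty}$ --- and since the function $\int_0^t U_0(s)g\,\d s$, given pointwise by $\x\mapsto\int_0^{\min(t,\tau_-(\x))}g(\Phi(\x,-s))\,\d s$, lies in $\mathscr{W}$ with zero trace on $\Gamma_-$, one reads off $\big[\B^+\int_0^t U_0(s)g\,\d s\big](\z)=h(\z)\big(1-\exp(-\min(t,\tau_-(\z)))\big)\le h(\z)$. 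For the inductive step ($k\ge 1$): on $\{\tau_-(\z)\le t\}$, Lemma \ref{lemmeHBint} bounds $\big[\B^+\int_0^t U_k(s)g\,\d s\big](\z)$ by $\big[H\B^+\int_0^t U_{k-1}(s)g\,\d s\big]\big(\Phi(\z,-\tau_-(\z))\big)$, and since $H$ is positive the inductive hypothesis together with \eqref{Hhy} make this $\le h(\z)$ (the composition with $\z\mapsto\Phi(\z,-\tau_-(\z))$ being well defined on $L^1$-classes, exactly as in the definition of $M_\lambda$); on $\{\tau_-(\z)>t\}$, \eqref{eq:B^+z} makes $\B^+\int_0^t U_k(s)g\,\d s$ vanish. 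Hence the bound holds for all $k\ge 0$.

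It remains to conclude. For $0\le s\le t$ and $k\ge 1$, monotonicity of the trace $\B^+$ and $\int_s^t U_k(r)g\,\d r\le\int_0^t U_k(r)g\,\d r$ give $0\le\B^+\int_s^t U_k(r)g\,\d r\le h\in L^1_+$, so the sequence $\big(\B^+\int_s^t U_k(r)g\,\d r\big)_{k\ge 1}$ is order-bounded by a fixed element of $L^1_+$, hence relatively weakly compact in $L^1_+$ by the Dunford--Pettis theorem. Consequently, by Theorem \ref{CNSH1} (with $J=[0,\infty)$, since there statement \textit{4)} implies statement \textit{1)}), the trajectory $(V_H(t)g)_{t\ge 0}$ is honest; as $g$ is quasi-interior, Proposition \ref{quasiinterior} \textit{(2)} then yields that the whole semigroup $(V_H(t))_{t\ge 0}$ is honest. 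I expect the main difficulty to lie in the choice of $g$: it must simultaneously be quasi-interior in all of $X=L^1(\O,\d\mu)$ --- which forces one to inject positive mass on $\O_{+\infty}$, where the natural candidate built from $h$ vanishes --- and produce $\Gamma_+$-traces remaining below $h$ along backward characteristics; the two requirements are reconciled by the geometric fact that backward characteristics starting on $\Gamma_+$ avoid $\O_{+\infty}$. A secondary technical point is the a.e.-compatibility of the compositions with $\z\mapsto\Phi(\z,-\tau_-(\z))$ used in the induction, which is the same measure-theoretic fact that underlies the boundedness of $M_\lambda$.
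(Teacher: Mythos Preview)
Your proof is correct and follows the same overall scheme as the paper: construct a quasi-interior element of $X$ from $h$, bound the traces $\B^+\int_0^t U_k(s)\cdot\,\d s$ inductively via Lemma~\ref{lemmeHBint} and the hypothesis~\eqref{Hhy}, and conclude with Theorem~\ref{CNSH1} and Proposition~\ref{quasiinterior}\,\textit{(2)}.

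The differences are in execution. The paper chooses a slightly more elaborate $f$ (with the factor $\tau_-(\x)/(\tau_-(\x)+\tau_+(\x))$ when both exit times are finite) engineered so that $\B^+f=h$ and, crucially, $U_0(t)f\le f$ pointwise on $\O_+$; this yields $\B^+\int_0^t U_0(s)f\,\d s\le t\,h$ and, after iteration, the decaying bound $\B^+\int_0^t U_n(s)f\,\d s\le \tfrac{t^n}{n!}\,h$, so criterion~\textit{2)} of Theorem~\ref{CNSH1} applies directly. Your simpler $g$ does not satisfy $U_0(t)g\le g$, but you instead obtain the uniform domination $\B^+\int_0^t U_k(s)g\,\d s\le h$ for all $k$ and $t$, and then appeal to Dunford--Pettis and criterion~\textit{4)}. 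Your route is arguably tidier (the induction is cleaner and no $t$-dependent growth appears), while the paper's route avoids the weak-compactness detour by producing an explicit vanishing of the norm. Both are perfectly valid.
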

\begin{proof} Let $h \in \lp$ satisfying the above assumption be given. Define then
\begin{equation*}
f(\x) = \begin{cases}
\dfrac{\tau_-(\x)}{\tau_-(\x) + \tau_+(\x)}\exp(-\tau_+(\x))\,h(\Phi(\x,\tau_+(\x)) \qquad &\text{ if } \quad \tau_-(\x) + \tau_+(\x) < \infty,\\
\exp(-\tau_+(\x))\,h(\Phi(\x,\tau_+(\x))  \qquad &\text{ if } \quad  \tau_-(\x)=\infty \quad \text{ and } \tau_+(\x) < \infty,
\end{cases}
\end{equation*}
and $f$ chosen freely on $\O_{+\infty}$ in such a way that $f \in X$ is quasi-interior. One sees easily (see \cite[Proposition 2.3]{abl2} for details) that $\B^+ f=h$. Moreover, since $\t_\pm(\x_t)=\t_\pm(\x) \pm t$ and $\Phi(\x_t,\tau_+(\x_t))=\Phi(\x,\tau_+(\x))$ for any $\x \in \O$, $t > 0$, $\x_t=\Phi(\x,-t)$, one checks easily that, for any $\x \in \O_+$, it holds
\begin{equation*}
U_0(t)f(\x) = \begin{cases}
\dfrac{\tau_-(\x)-t}{\tau_-(\x) + \tau_+(\x)}\exp(-t-\tau_+(\x))\,h(\Phi(\x,\tau_+(\x))\chi_{\{t < \tau_-(\x)\}} \qquad &\text{ if } \x \in \O_+ \cap \O_{-},\\
\exp(-t-\tau_+(\x))\,h(\Phi(\x,\tau_+(\x))  \qquad &\text{ if } \quad  \x \in \O_+ \cap \O_{-\infty}.
\end{cases}
\end{equation*}
Therefore, one sees that for any $t > 0$, $U_0(t)f(\x) \leq f(\x)$ for almost every $\x \in \O_+$. Let $t > 0$ be fixed. According to Lemma \ref{lemmeHBint}, one has
$$\left[ \B^+\int_0^t U_1(s)f \d s \right](\z) \leq  \left[H\B^+\int_0^t U_{0}(s)f\d s\right](\Phi(\z,-\tau_-(\z)) \qquad \forall \z \in \Gamma_+.$$
Since $U_0(s) f \leq f$ on $\O_+$ we get
$$\left[ \B^+\int_0^t U_1(s)f \d s \right](\z) \leq t\,\left[H\,\B^+f\right](\Phi(\z,-\tau_-(\z))=t  H\,h(\Phi(\z,-\tau_-(\z))) \qquad \forall \z \in \Gamma_+.$$
From \eqref{Hhy}, one gets therefore
$$\left[ \B^+ \int_0^t U_1(s)f \d s \right](\z)\chi_{\{\tau_-(\z) < \infty\}} \leq t h(\z) \qquad \text{ for a. e. } \z \in \Gamma_+.$$
Recalling that $\left[ \B^+ \int_0^t U_1(s)f \d s \right](\z)=0$ if $\tau_-(\z) > t$, we get therefore that
$$\left[ \B^+ \int_0^t U_1(s)f \d s \right](\z) \leq t h(\z) \qquad  \text{ for a. e. }  \z \in \Gamma_+.$$
Repeating the argument, one gets that
\begin{equation*}\label{bexp}
\left[\B^+\int_0^t U_n(s)f \d s\right](\z) \leq \dfrac{t^n}{n!} h(\z)\qquad \forall t > 0, n \geq 1,  \text{ for a. e. }\z \in  \Gamma_+ .\end{equation*}
This shows that, for any $t > 0$,
$$\left\|\B^+\int_0^t U_n(s)f \d s\right\|_{L^1_+} \leq \dfrac{t^n}{n!}\|h\|_{L^1_+} \longrightarrow 0 \quad \text{ as } n \to \infty$$
which, according to Theorem \ref{CNSH1}, the trajectory $(V_H(t)f)_{t \geq 0}$ is honest. Since $f$ is quasi-interior, Proposition \ref{quasiinterior} yields the conclusion.\end{proof}
Besides the semigroup approach that we developed in the previous lines, it is also possible to develop a resolvent approach to honesty, as the one  developed in  \cite{mmk} for the free-streaming case and in \cite{all} for conservative boundary conditions.  Such an approach provides  necessary and sufficient conditions for a trajectory to be honest which are different from the one listed above. They can be seen as the analogue of \cite[Theorem 3.5 \&  Theorem 3.11]{almm} which are established in the additive perturbation framework. Since we decided to mainly focus on the semigroup approach, we only state the result for the sake of completeness but omit the details of the proof which can be adapted without major difficulty from   \cite{mmk} and \cite{almm}:

 \begin{theorem} \label{CNSH2} Given $f\in X_+$, the following statements are equivalent
\begin{enumerate}[1)]
\item the trajectory $(V_H(t)f)_{t\geq 0}$ is honest;
\item $\Theta\left((\lambda - \mathcal{A})^{-1}f\right) = 0$ for all/some $\lambda > 0$;
\item $\lim_{n\to \infty}\left\|(M_ {\lambda}H)^nG_{\lambda}f\right\|_{L^1_+} = 0$ for all/some $\lambda > 0$;
\item $(\lambda - \mathcal{A})^{-1}f \in \D({\,\overline{T_H}\,})$ for all/some $\lambda > 0$;
\item the set $\left((M_ {\lambda}H)^nG_{\lambda}f)\right)_n$ is relatively weakly compact in $L^1_+$ for all/some $\lambda > 0$.
\end{enumerate} In particular, the whole $C_0$-semigroup $(V_H(t))_{t\geq 0}$ is honest if and only if $\mathcal{A} = {\overline{T_H}}$.\end{theorem}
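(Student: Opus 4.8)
The plan is to follow the resolvent approach of \cite{mmk,almm}, the pivot being identity \eqref{barl2}, which expresses $\Theta\big((\l-\A)^{-1}f\big)$ as $\lim_{n}\int_{\Gamma_+}(M_\l H)^nG_\l f\,\d\mu_+$. First, combining the representation $(\l-\A)^{-1}f=\l\int_0^\infty e^{-\l t}\big(\int_0^tV_H(s)f\,\d s\big)\,\d t$ with the linearity and graph-norm continuity of $\Theta$, one gets $\Theta\big((\l-\A)^{-1}f\big)=\l\int_0^\infty e^{-\l t}\,\Theta\big(\int_0^tV_H(s)f\,\d s\big)\,\d t$. For nonnegative $f$ the map $t\mapsto\Theta\big(\int_0^tV_H(s)f\,\d s\big)$ is nonnegative, continuous and nondecreasing (the monotonicity recorded after Theorem \ref{CNSH1}), so this integral vanishes for one $\l>0$ if and only if $\Theta\big(\int_0^tV_H(s)f\,\d s\big)=0$ for all $t\geq0$, if and only if it vanishes for every $\l>0$. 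On the other hand, by the semigroup law and $\cc\big(\int_s^tV_H(r)f\,\d r\big)=\|V_H(s)f\|_X-\|V_H(t)f\|_X$, honesty of the trajectory $(V_H(t)f)_{t\geq0}$ on $[0,\infty)$ is equivalent to $\Theta\big(\int_s^tV_H(r)f\,\d r\big)=0$ for all $s\le t$, which by additivity of $\Theta$ and the above monotonicity amounts to $\Theta\big(\int_0^tV_H(s)f\,\d s\big)=0$ for all $t$. This yields the equivalence of 1) and 2), and in passing the $\l$-independence of condition 2).

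The other equivalences are essentially formal. Since $f\geq0$ forces $G_\l f\geq0$ while $H$ and $M_\l$ are positive, $(M_\l H)^nG_\l f$ is a nonnegative element of $\lp$ (Lemma \ref{lemmaML}), so $\int_{\Gamma_+}(M_\l H)^nG_\l f\,\d\mu_+=\|(M_\l H)^nG_\l f\|_{\lp}$ and \eqref{barl2} identifies the limit in 3) with $\Theta\big((\l-\A)^{-1}f\big)$: this gives 2)$\Leftrightarrow$3). For 2)$\Leftrightarrow$4) one uses that $(\l-\A)^{-1}f\in\D(\A)_+$, the semigroup being substochastic: then \eqref{corhon} gives $\Theta\big((\l-\A)^{-1}f\big)=0$ whenever $(\l-\A)^{-1}f\in\D(\overline{\T_H})$, while Proposition \ref{prophon} supplies the converse. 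Finally 3)$\Rightarrow$5) is trivial.

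The only genuinely technical implication is 5)$\Rightarrow$3), handled exactly as 4)$\Rightarrow$2) in the proof of Theorem \ref{CNSH1}. Setting $g_n:=\int_0^\infty e^{-\l t}U_n(t)f\,\d t$, Proposition \ref{propertiesUk}\,(6) gives $\T_\mathrm{max}g_n=\l g_n$ and $\B^+g_n=(M_\l H)^nG_\l f$, so the trace inequality of Theorem \ref{theotrace} yields $\|(M_\l H)^nG_\l f\|_{Y_+}\leq(1+\l)\|g_n\|_X$; since $\sum_n g_n=(\l-\A)^{-1}f$ converges in $X$ (Corollary \ref{coruv}), $\|g_n\|_X\to0$, whence $\|(M_\l H)^nG_\l f\|_{Y_+}\to0$. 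As \eqref{barl2} ensures that $\lim_n\int_{\Gamma_+}(M_\l H)^nG_\l f\,\d\mu_+$ exists, it suffices to check that every weakly convergent subsequence $(M_\l H)^{n_k}G_\l f\rightharpoonup g_\infty$ (in $\lp$) has $g_\infty=0$: on $\Gamma_{i,+}=\{\z\in\Gamma_+:\tau_-(\z)>1/i\}$ one has $\d\xi_+\geq\tfrac1i\d\mu_+$, so convergence in $Y_+$ forces $\int_{\Gamma_{i,+}}(M_\l H)^{n_k}G_\l f\,\d\mu_+\to0$, while weak convergence identifies this limit with $\int_{\Gamma_{i,+}}g_\infty\,\d\mu_+$; since $g_\infty\geq0$, this gives $g_\infty=0$ $\mu_+$-a.e.\ on $\Gamma_+=\bigcup_i\Gamma_{i,+}$, hence $\lim_n\|(M_\l H)^nG_\l f\|_{\lp}=0$, i.e.\ 3) holds. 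I expect this weak-compactness step to be the main obstacle, precisely as in \cite{almm}.

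For the last assertion, if $\A=\overline{\T_H}$ then $(\l-\A)^{-1}f\in\D(\A)=\D(\overline{\T_H})$ for every $f\in X_+$, so 4) holds and every trajectory is honest, i.e.\ $\vt$ is honest. Conversely, if $\vt$ is honest then $(\l-\A)^{-1}f\in\D(\overline{\T_H})$ for all $f\in X_+$; since $X_+$ spans $X$ and $\D(\overline{\T_H})$ is a subspace, $\D(\A)=(\l-\A)^{-1}X\subseteq\D(\overline{\T_H})$, while $\A$, being a closed extension of $\T_H$ (Theorem \ref{limiting}), extends $\overline{\T_H}$; therefore $\A=\overline{\T_H}$.
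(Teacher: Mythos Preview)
Your proof is correct and carries out precisely the adaptation the paper gestures at: the authors explicitly omit the details, writing that ``the proof \ldots\ can be adapted without major difficulty from \cite{mmk} and \cite{almm}'', and your argument is exactly that adaptation. The key ingredients---pulling $\Theta$ through the Laplace representation $(\l-\A)^{-1}f=\l\int_0^\infty e^{-\l t}\big(\int_0^tV_H(s)f\,\d s\big)\d t$ (legitimate since the integrand is Bochner-integrable in the graph norm of $\A$), invoking \eqref{barl2} for 2)$\Leftrightarrow$3), combining \eqref{corhon} and Proposition~\ref{prophon} for 2)$\Leftrightarrow$4), and reproducing the weak-compactness argument of Theorem~\ref{CNSH1} with $g_n=\int_0^\infty e^{-\l t}U_n(t)f\,\d t$ in place of $\int_s^tU_n(r)f\,\d r$---are all handled correctly, including the passage from weak subsequential convergence to zero to the vanishing of the full $\lp$-norm limit (which uses that the limit in \eqref{barl2} already exists and that the pairing with $\mathbf{1}\in L^\infty$ recovers the norm for nonnegative elements). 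The final paragraph on $\A=\overline{\T_H}$ is also fine.
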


\begin{remark}\label{suffhon} It is possible to  provide sufficient conditions for a trajectory to be honest which are reminiscent to those given in \cite[Proposition 2.6]{mmv}. Namely,
\begin{enumerate}
\item given $f \in X_+$, if there exists $\lambda > 0$ such that $(M_{\lambda}H)G_{\lambda}f \leq G_{\lambda}f$, then the trajectory $(V_H(t)f)_{t\geq 0}$ is honest;
\item if $g \in \D(\overline{\T_H})$ is such that $\T_H g \leq \lambda g$ for some $\lambda > 0$, then $g \in X_+$ and  the trajectory $(V_H(t)g)_{t\geq 0}$ is honest.
\end{enumerate}\end{remark}

\section{Some examples}
\label{sec6}

We illustrate here our approach by two examples. These two examples are dealing with the free-streaming equation conservative boundary and, as so, have already been dealt with in our previous contribution \cite{all}. The scope here is to show that our new approach, based upon the semigroup representation \eqref{sum}, allows not only to recover, by different means, the results of \cite{all}, but also to characterize, in both examples, new interesting properties.

\subsection{An instructive one dimensional example revisited}

We revisit here a one-dimensional example introduced in \cite[Example 4.12, p. 76]{voigt}. This example has been revisited recently in both \cite{all,mmk}. Given two real nondecreasing real sequences $(a_k)_{k \geq 0}$ and $(b_k)_{k \geq 0}$ with
$$a_k < b_k < a_{k+1} \qquad \forall k \geq 0, \qquad \lim_{k \to \infty} a_k=\infty$$
set
$$\O= \bigcup_{k=0}^{\infty} (a_k,b_k)=:\bigcup_{k=0}^\infty I_k.$$
We assume then $\mu$ to be the Lebesgue measure on $\mathbb{R}$ and consider the constant field $\mathscr{F}: \mathbb{R}\to \mathbb{R}$ given by $\mathscr{F}(x) = 1$ for all $x \in \mathbb{R}$. In such a case, the flow $\Phi(x,t)$ is given by
$$\Phi(x,t) = x + t \qquad \text{ for any } x, t\in \mathbb{R},$$
 and
$$\Gamma_- = \{a_k, k\in \mathbb{N }\}, \quad \Gamma_+ = \{b_k, k\in \mathbb{N}\}, \quad \tau_-(x) = x - a_k \qquad \forall a_k < x < b_k, \qquad k \in \mathbb{N}.$$
The measures $\d \mu_\pm$ are then the counting measures over $\Gamma_\pm$. We define then $H \in \Be(\lp,\lm)$ by
\begin{equation}
\label{boundh}
H\psi(a_k) = \begin{cases}  0 \qquad &\text{ if } k = 0,\\
b_{k-1} \qquad &\text{ if } k > 0
\end{cases}
\end{equation}
for any $\psi \in \lp$. It is clear that  $H$ is a positive boundary operator with unit norm. We then explicit the strongly family of operators $\left\{(U_k(t))_{t \geq 0}\,;\,k \in \mathbb{N}\right\}$ as defined in Definition \ref{UK}.  To this aim for any $k\in \mathbb{N}$, set $\Delta_k = b_k - a_k$. For $f\in \mathcal{D}_0$ and $t > 0$ one easily sees that
\begin{equation}
U_0(t)f(x) =\begin{cases}
 f(x - t) \qquad &\text{ if } 0 < t < x - a_k,\\
0 \qquad &\text{
otherwise},\end{cases}
\label{uont}
\end{equation}
which yields
\begin{equation}
\B^+U_0(t)f(b_k) = \begin{cases}
 f(b_k - t) \qquad &\text{ if } 0 < t < \Delta_k,\\
0 \qquad &\text{
otherwise}.\end{cases}
\label{buot}
\end{equation}
By induction one can easily show that for $n\geq 1$, $k \geq 0$, $a_k < x < b_k$ one has
\begin{equation}
U_n(t)f(x) = \begin{cases}
 f(b_{k - n} - a_{k} + x + &\sum_{j=k-n+1}^{k-1}\Delta_{j} - t)\qquad \text{ if } k\geq n \\
 \qquad &\text{  and}\quad x - a_{k} + \sum_{j=k-n+1}^{k-1}\Delta_{j} < t < x - a_{k} + \sum_{j=k-n}^{k-1}\Delta_{j},\\
0 \qquad &\text{
otherwise}\end{cases}
\label{bunt}
\end{equation}
so that
\begin{equation}
\B^+U_n(t)f(b_k) = \begin{cases}
f(b_{k - n} + \sum_{j=k-n+1}^{k}\Delta_{j} - t)\quad &\text{ if }\quad k\geq n \quad \text{and}\quad\sum_{j=k-n+1}^{k}\Delta_{j} < t < \sum_{j=k-n}^{k}\Delta_{j},\\
0 \qquad &\text{
otherwise}.\end{cases}
\label{bbnt}
\end{equation}
Because of this we have  for all $f\in X$
\begin{equation}
\B^+\left(\int_0^tU_n(s)f \d s\right)(b_k) = \begin{cases}
\displaystyle\int_{a_{k-n}\vee(b_{k-n}+\sum_{j=k-n+1}^{k}\Delta_{j}-t)}^{b_{k-n}}f(s)\d s \qquad &\text{ if }  \quad k\geq n \quad \text{ and}\quad t > \sum_{j=k-n+1}^{k}\Delta_{j},\\
0 \qquad &\text{
otherwise}.\end{cases}
\label{btint}
\end{equation}
Now we are able to prove the following where $(V_H(t))_{t \geq 0}$ is the $C_0$-semigroup constructed through Theorem \ref{limiting} and given by  $V_H(t) = \sum_{n=0}^{\infty}U_n(t)$ ($t\geq 0$):
 \begin{proposition}
 The $C_0$-semigroup $(V_H(t))_{t\geq 0}$ is honest if and only if
 \begin{equation}
 \Delta := \sum_{k=0}^{\infty}(b_k - a_k) = \infty
 \label{honsem}
 \end{equation}
 If $\Delta < \infty$, define
 $$J_k := \bigg[\sum_{j=k+1}^{\infty}\Delta_j, \sum_{j=k}^{\infty}\Delta_j\bigg]\subset [0,\infty) \qquad \text{ for any } \quad k \in \mathbb{N}.$$ Then, given $f\in X_+$, the trajectory $(V_H(t)f)_{t\geq 0}$ is honest on $J_k$ if and only if $\displaystyle\int_{a_{k}}^{b_{k}}f(s)\d s = 0$ which is equivalent to $f(s) = 0$ for almost every $s\in I_{k}$.
\label{honhn}
\end{proposition}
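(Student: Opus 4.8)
The plan is to evaluate, via the formula \eqref{btint} already derived, the quantities entering the honesty criteria of Theorem \ref{CNSH1} and Remark \ref{C0rH1}. Write $\Delta_j=b_j-a_j$ and $\sigma_k=\sum_{j\geq k}\Delta_j\in(0,\infty]$, so that $\sigma_0=\Delta$ and $J_k=[\sigma_{k+1},\sigma_k]$. Re-indexing $k=m+n$ in \eqref{btint}, one reads off that for each $m\geq 0$, $n\geq 1$, $t>0$ there is an interval $I_{m,n}(t)\subseteq I_m$, \emph{not depending on $f$}, given by
$$I_{m,n}(t)=\Bigl[\,a_m\vee\bigl(b_m+{\textstyle\sum_{j=m+1}^{m+n}}\Delta_j-t\bigr),\ b_m\,\Bigr]\ \ \text{if}\ \ t>{\textstyle\sum_{j=m+1}^{m+n}}\Delta_j,\qquad I_{m,n}(t)=\varnothing\ \ \text{otherwise},$$
and such that $\B^+\bigl(\int_0^t U_n(r)f\,\d r\bigr)(b_{m+n})=\int_{I_{m,n}(t)}f$, while $\B^+\bigl(\int_0^t U_n(r)f\,\d r\bigr)(b_k)=0$ for $k<n$. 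Since $\mu_+$ is the counting measure on $\{b_k\}_k$, this gives at once
$$\Bigl\|\B^+{\textstyle\int_0^t} U_n(r)f\,\d r\Bigr\|_{L^1_+}=\sum_{m\geq 0}\Bigl|\,{\textstyle\int_{I_{m,n}(t)}}f\,\Bigr|\ \leq\ \sum_{m\in S_n(t)}\int_{I_m}|f|,\qquad S_n(t):=\bigl\{m\geq 0:{\textstyle\sum_{j=m+1}^{m+n}}\Delta_j<t\bigr\}.$$

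For the dichotomy \eqref{honsem}: if $\Delta=\infty$ then $\sigma_{m+1}=\infty$ for every $m$, the sets $S_n(t)$ are decreasing in $n$ with $\bigcap_n S_n(t)=\varnothing$, and since $m\mapsto\int_{I_m}|f|$ is summable, dominated convergence yields $\bigl\|\B^+\int_0^t U_n(r)f\,\d r\bigr\|_{L^1_+}\to 0$ as $n\to\infty$ for every $f\in X$ and $t>0$; by Remark \ref{C0rH1} the semigroup is honest. If instead $\Delta<\infty$, pick any nonnegative $f\in X$ with $f\neq 0$ and any $t>\Delta$; then $\sum_{j=m}^{m+n}\Delta_j\leq\Delta<t$ forces $I_{m,n}(t)=I_m$ for all $m,n$, hence $\bigl\|\B^+\int_0^t U_n(r)f\,\d r\bigr\|_{L^1_+}=\sum_m\int_{I_m}f=\|f\|_X>0$ for every $n$, so the criterion of Remark \ref{C0rH1} fails and the semigroup is not honest. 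This establishes \eqref{honsem}.

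For the statement on subintervals, fix $k$, a nonnegative $f\in X$ and $s\leq t$ in $J_k$. Subtracting the first identity above at $t$ and at $s$ (note $I_{m,n}(s)\subseteq I_{m,n}(t)$ since $s\leq t$) gives $\B^+\bigl(\int_s^t U_n(r)f\,\d r\bigr)(b_{m+n})=\int_{I_{m,n}(t)\setminus I_{m,n}(s)}f$, and I would pass to the limit $n\to\infty$ term by term, using $\sum_{j=m+1}^{m+n}\Delta_j\uparrow\sigma_{m+1}$ and $\sum_{j=m}^{m+n}\Delta_j\uparrow\sigma_m$. For $m\geq k+1$ one has $\sum_{j=m}^{m+n}\Delta_j<\sigma_m\leq\sigma_{k+1}\leq s$, so $I_{m,n}(s)=I_{m,n}(t)=I_m$ and the term vanishes identically. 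For $m\leq k-1$ one has $\sigma_{m+1}\geq\sigma_k\geq t$, and a short check — the only delicate case being $m=k-1$ with $t=\sigma_k$, where one invokes absolute continuity of $c\mapsto\int_c^{b_m}f$ — shows the term tends to $0$. For $m=k$ the left endpoint of $I_{k,n}(t)$ tends to $a_k\vee(b_k+\sigma_{k+1}-t)=b_k-(t-\sigma_{k+1})$ (using $t\leq\sigma_k$), so the term tends to $F_k(t)-F_k(s)$, where $F_k(t):=\int_{b_k-(t-\sigma_{k+1})}^{b_k}f\geq0$. Since every term is dominated by the summable sequence $m\mapsto\int_{I_m}|f|$, dominated convergence gives $\lim_n\bigl\|\B^+\int_s^t U_n(r)f\,\d r\bigr\|_{L^1_+}=F_k(t)-F_k(s)$. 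By the equivalence $(1)\Leftrightarrow(2)$ of Theorem \ref{CNSH1}, the trajectory $(V_H(t)f)_{t\geq 0}$ is honest on $J_k$ exactly when $F_k$ is constant on $J_k$; as $F_k(\sigma_{k+1})=0$, this means $F_k\equiv 0$ on $J_k$, i.e. $\int_c^{b_k}f=0$ for every $c\in[a_k,b_k]$, which — because $f\geq 0$ — is equivalent to $\int_{a_k}^{b_k}f=0$ and to $f=0$ $\mu$-almost everywhere on $I_k$.

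The routine part — the induction producing \eqref{bunt}--\eqref{btint} — is already carried out in the excerpt, so the real work lies entirely in the asymptotic bookkeeping of the last paragraph: tracking the moving left endpoint $b_m+\sum_{j=m+1}^{m+n}\Delta_j-t$ at the two endpoints of $J_k$ (where $\sigma_{m+1}=t$ or $\sigma_k=t$), and justifying the exchange of $\lim_n$ with $\sum_m$. Both are dispatched by the summability of $m\mapsto\int_{I_m}|f|$ together with the absolute continuity of the indefinite integral of $f$, so I do not expect a genuine obstacle beyond care with these edge cases.
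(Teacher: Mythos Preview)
Your proof is correct and follows essentially the same approach as the paper: both compute the trace norms via formula \eqref{btint}, pass to the limit termwise using dominated convergence against the summable majorant $m\mapsto\int_{I_m}|f|$, and then invoke the honesty criterion of Theorem \ref{CNSH1}/Remark \ref{C0rH1}. The only cosmetic differences are that you subtract first and then take limits (the paper takes limits of $\|\B^+\int_0^t\cdot\|$ and then subtracts, which is equivalent here since everything is nonnegative), and you give a separate direct argument for non-honesty when $\Delta<\infty$ by choosing $t>\Delta$, whereas the paper leaves this implicit in the local criterion.
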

\begin{remark} The first part of the Proposition is a well-known fact, first proven in \cite{voigt} and revisited recently in \cite{all, mmk}. The second part, on the contrary, is new to our knowledge and provides a criterion for 'local' honesty.
\end{remark}

\begin{proof}
Thanks to formula (\ref{btint}) we can state that for all $f\in X_+$, $n \geq 1$ and $t > 0$
 \begin{equation}\begin{split}
\left\|\B^+\int_0^t U_n(s)f \d s\right\|_{L^1_+} &= \sum_{k=0}^{\infty}\left[\int_{a_{k}\vee(b_{k}+\sum_{j=k+1}^{k+n}\Delta_{j}-t)}^{b_{k}}f(s)\d s\right]\chi_{\{t>\sum_{j=k+1}^{k+n}\Delta_{j}\}}\\
&\leq \sum_{k=0}^{\infty}\int_{a_{k}}^{b_{k}}f(s)\d s = \|f\|_X.
\label{bisem}
\end{split}\end{equation}
Assume first that $ \Delta = + \infty$. In such a case for any $f\in X_+$, $k \in \mathbb{N}$  we have:
$$\lim_{n\to \infty}\left[\int_{a_{k}\vee(b_{k}+\sum_{j=k+1}^{k+n}\Delta_{j}-t)}^{b_{k}}f(s)\d s\right]\chi_{\{t>\sum_{j=k+1}^{k+n}\Delta_{j}\}} = 0 $$
which, thanks to the dominated convergence theorem yields
$$\lim_{n\to \infty}\left\|\B^+\int_0^t U_n(s)f \d s\right\|_{L^1_+} =0.$$
By virtue of Corollary \ref{C0rH1} the trajectory $(V_H(t)f)_{t\geq 0}$ is then honest. Since $f\in X_+$ is arbitrary, the semigroup itself is honest.

Now consider the case $ \Delta < + \infty$. In such a case, given $k_0 \in \mathbb{N},$ let $t\in J_{k_0} \subseteq [0,\Delta]\subseteq [0,\infty)$. It is easy to see that
$$
\lim_{n\to \infty}\left[\int_{a_{k}\vee(b_{k}+\sum_{j=k+1}^{k+n}\Delta_{j}-t)}^{b_{k}}f(s)\d s\right]\chi_{\{t>\sum_{j=k+1}^{k+n}\Delta_{j}\}} = \begin{cases}
0 & \text{  if }  k = 0,\ldots ,k_0 - 1 \geq 0,\\
\displaystyle \int_{a_{k}}^{b_{k}}f(s) \d s & \text{ if}  k \geq k_0 + 1,
\end{cases}
$$
while

$$\lim_{n\to \infty}\left[\int_{a_{k_0}\vee(b_{k_0}+
\sum_{j=k_0+1}^{k_0+n}\Delta_{j}-t)}^{b_{k_0}}f(s)\d s\right]\chi_{\{t>\sum_{j=k_0+1}^{k_0+n}\Delta_{j}\}} = \int_{(b_{k_0}+\sum_{j=k_0+1}^{\infty}\Delta_{j}-t)}^{b_{k_0}}f(s)\d s.$$

Then, from the dominated convergence theorem and \eqref{bisem}) we get

$$\lim_{n\to\infty}\left\|\B^+\int_0^t U_n(s)f \d s\right\|_{L^1_+} = \int_{(b_{k_0}+\sum_{j=k_0+1}^{\infty}\Delta_{j}-t)}^{b_{k_0}}f(s)\d s + \sum_{k=k_0+1}^{\infty}\int_{a_{k}}^{b_{k}}f(s)\d s.$$
Therefore, for any $t_1$, $t_2 \in J_{k_0}$ with $t_1 < t_2$ one has

$$\lim_{n\to\infty}\left\|\B^+\int_{t_1}^{t_2} U_n(s)f \d s\right\|_{L^1_+} = \int_{(b_{k_0}+\sum_{j=k_0+1}^{\infty}\Delta_{j}-t_2)}^{(b_{k_0}+\sum_{j=k_0+1}^{\infty}\Delta_{j}-t_1)}f(s)\d s \leq \int_{a_{k_0}}^{b_{k_0}} f(s)\d s$$
where the last inequality is an identity if $t_1 = \sum_{j=k_0+1}^\infty \Delta_j$ and $t_2 =\sum_{j=k_0}^\infty \Delta_j$.
Using Corollary \ref{C0rH1} again, this shows that the trajectory $(V_H(t)f)_{t\geq 0}$ is honest on $J_{k_0}$
 if and only if $\int_{a_{k_0}}^{b_{k_0}}f(s)\d s = 0$ and, being $f$ nonnegative, this is equivalent to $f(s) = 0$ for almost every $s\in I_{k_0}$.\end{proof}

\begin{remark}
 As an immediate consequence of the obtained result we can state the following: if $\Delta < \infty$, then no trajectory $(V_H(t)f)_{t\geq 0}$ ($f\in X_+$) is honest on an interval $J \supseteq [0,\Delta]$.
 \end{remark}
\begin{remark} Finally observe that, in case $ \Delta < + \infty$, for all $f\in X$ and $t \geq \Delta$ one has $V_H(t)f\equiv 0$. Indeed for $t \geq \Delta$ one has $U_0(t)f\equiv 0$. Furthermore for any $n \geq 1$, $k \geq 0$, one has $t > \sum_{j=k}^{k+n}\Delta_{j}$. This implies $U_n(t)f(x) = 0$ for all $n \geq 1$, $f\in \mathcal{D}_0$ and $x\in\O$ which gives the result.
\end{remark}
\subsection{Kinetic equation with specular reflections}

In this second example, we consider the physically relevant case of free-streaming semigroup associated to specular reflections. Such a model, as well-known \cite{al3}, is strongly related to the so-called billiard flow which is a well-known dynamical system studies in ergodic theory \cite{corn}. We do not provide here any new honesty criterion but show how the result we obtained before yields possibly new property of the billiard flow. More precisely, we consider now a transport equation in $\mathbb{R}^N$ with $N = 2d$, $d \in \mathbb{N}$ and consider then
$$\O = D \times \mathbb{R}^d$$
 where  $D$ is a smooth open bounded and convex subset of $\mathbb{R}^d$. Any $\x \in \O$ can be written $\x=(x,v)$, with $x \in D$, $v \in \mathbb{R}^d$ and consider the measure $\d \mu(\x) = \d x\otimes \d\varrho(v)$, where $\d \varrho$ is a positive Radon measure on $\mathbb{R}^d$ with support $V$. We assume for simplicity that $V$ and $\d\varrho$ to be orthogonally invariant. Assume  the field $\mathscr{F}\::\:\x \in \mathbb{R}^N \to \mathbb{R}^N$  to be given by $\mathscr{F}(\x) = (v,0)$ for all $\x=(x,v)\in R^d\times R^d$. Classically, the associated flow is given by $\Phi(\x,t) = (x + vt, v)$ for all $\x=(x,v)$ and $t\in \mathbb{R}$. In this case,
 $$\Gamma_{\pm}=\left\{\y=(x,v) \in \partial  D \times V\,;\, \pm v \cdot n(x) > 0\right\}, \qquad \d\mu_{\pm}(\y)=|v \cdot n(x)|\d\varrho(v)\d\sigma(x)$$
 where  $n(x)$ denotes the outward normal unit vector at $x \in \partial D$ and $\d\sigma(\cdot)$ is the Lebesgue surface measure on $\partial D$. We consider here the boundary operator $H$ to be  associated by the specular reflection, i.e.
$$H\psi(\y) = \psi(x, v - 2(v\cdot n(x))n(x)), \qquad \psi \in L^1_+, \:\;\y=(x,v)\in\Gamma_-.$$
It is known that $H$ is a positive and conservative operator, i.e. $\|H\psi\|_{L^1_-} = \|\psi\|_{L^1_+}$ for any nonnegative $\psi \in L^1_+$. In particular, $H$ has unit norm.  As in the previous example, let us characterize the families $(U_k(t))_{t\geq 0}$, $k \in \mathbb{N}$. Observe  that for $\x = (x,v) \in \overline{D}\times V$ we can define the sequence of \emph{rebound times}:
$$t_1(\x) = \tau_-(\x),\quad \quad t_2(\x) = t_1(\x) + \tau_-(\x_1) \quad \ldots \quad t_k(\x) = t_{k - 1}(\x) + \tau_-(\x_{k-1}) = \sum_{j=1}^{k-1}\tau_-(\x_j)$$
where , setting $\x_0 = \x$, $x_0 = x$, $v_0 = v$, one has, for any $j = 1,\ldots,k$:
\begin{multline*}
\x_j = (x_j,v_j )\in \Gamma_+, \qquad  \text{ with } \quad x_j = x_{j-1} - \tau_-(\x_{j-1})v_{j-1}, \in \partial D,\\
  v_j = v_{j-1} - 2(v_{j-1}\cdot n(x_{j-1}))n(x_{j-1}).\end{multline*}

With this notations, setting also $t_0 = 0$, we have for any $f\in \mathcal{D}_0$,  $k \geq 0$, $\x = (x,v)\in\O$
$$
U_k(t)f(\x) =
f(x_k - (t - t_k(\x))v_k,v_k)\,\chi_{\{t_k(\x) \leq t < t_{k+1}(\x)\}}.$$
Recall that $V_H(t)=\sum_k U_k(t)$ and, since for a given $t > 0$ and a given $\x \in \O$, there exists a unique $k \in \mathbb{N}$ such that $t \in [t_k(\x),t_{k+1}(\x)]$, one has $V_H(t)f(\x)=U_k(t)f(\x)$ (of course, such $k$ depends on $\x$). This implies that, for any $t\geq 0$,
$$V_H(t)f = f\circ \vartheta_t$$
 where $\{\vartheta_t; t \in \mathbb{R}\}$ is the one-parameter group of transformations on $\overline{D}\times V$ corresponding to the so-called \emph{billiard flow} \cite {corn} (see also \cite {abl4}).  The following is taken from \cite{voigt, all} and is proven through the resolvent approach:
 \begin{proposition} Assume that $\d\sigma(\partial D) < \infty$. If there exists some nonnegative $\psi \in L^1(V,\d\varrho(v))$ with $\psi(v)=\psi(|v|)$ for any $v \in V$ and $$\int_{V} \left(1+|v|\right)\psi(|v|)\d\varrho(v) < \infty$$
 then the semigroup $(V_H(t))_{t \geq 0}$ is honest.
 \end{proposition}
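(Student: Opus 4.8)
The plan is to deduce the statement from the practical criterion of Proposition \ref{propH}: it suffices to exhibit a \emph{quasi-interior} $h \in \lp$ for which the pointwise inequality \eqref{Hhy} holds. The key observation is that the specular reflection $w \mapsto w - 2(w\cdot n(x))\,n(x)$ is an orthogonal transformation, hence preserves the Euclidean norm; consequently \emph{any} weight depending on the velocity only through $|v|$ is invariant under $H$ along characteristics. This suggests taking
\[
h(\y) = \psi(|v|), \qquad \y = (x,v)\in\Gamma_+,
\]
where $\psi$ is the radial profile furnished by the hypothesis (which we may and do take to be $\varrho$-a.e. strictly positive, so that $h$ will be quasi-interior; this is the relevant case).

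First I would verify \eqref{Hhy}. For a.e. $\z=(x,v)\in\Gamma_+$ one has $v\cdot n(x)>0$, in particular $v\neq 0$, so that, $D$ being bounded and convex, $\tau_-(\z)\le \mathrm{diam}(D)/|v|<\infty$ and $\Phi(\z,-\tau_-(\z)) = (x',v)=:\y\in\Gamma_-$ with $x' = x-\tau_-(\z)\,v\in\partial D$ and $v\cdot n(x')<0$. Then, with the reflected velocity $v' := v - 2(v\cdot n(x'))n(x')$ (which satisfies $v'\cdot n(x')>0$),
\[
[Hh](\y) = h(x',v') = \psi(|v'|) = \psi(|v|) = h(\z),
\]
so \eqref{Hhy} holds — in fact with equality, which is the analytic counterpart of the conservativity $\|H\psi\|_{\lm}=\|\psi\|_{\lp}$.

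Next I would check that $h\in\lp$ and that the auxiliary datum built from $h$ lies in $X$ and is quasi-interior. One has $\|h\|_{\lp} = \int_{\partial D}\big(\int_{\{v\cdot n(x)>0\}}\psi(|v|)\,|v\cdot n(x)|\,\d\varrho(v)\big)\,\d\sigma(x)$; by the orthogonal invariance of $V$ and $\varrho$ the inner integral is independent of $x$ and is bounded by $\int_V |v|\,\psi(|v|)\,\d\varrho(v)<\infty$, and since $\sigma(\partial D)<\infty$ we conclude $h\in\lp$. The function $f$ associated with $h$ in the proof of Proposition \ref{propH} (constructed through \cite[Proposition 2.3]{abl2}, with $\B^+f=h$) then belongs to $X$ — here the bound $\tau_-(\x)\le\mathrm{diam}(D)/|v|$ together with $\sigma(\partial D)<\infty$ and $\int_V\psi\,\d\varrho<\infty$ is what is used — and can be chosen quasi-interior in $X$, defining it freely but strictly positive on $\O_{+\infty}$ (a $\mu$-null set unless $\varrho(\{0\})>0$, in which case it equals $D\times\{0\}$). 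Proposition \ref{propH} then yields at once that the whole semigroup $(V_H(t))_{t\ge 0}$ is honest.

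I expect the only points requiring any care to be the integrability bookkeeping of the previous paragraph — this is exactly where the hypotheses $\sigma(\partial D)<\infty$ and $\int_V(1+|v|)\,\psi(|v|)\,\d\varrho(v)<\infty$ are consumed, the factor $|v|$ coming from $|v\cdot n|\le|v|$ for $h\in\lp$ and the remaining $\int_V\psi\,\d\varrho$ from requiring $f\in X$ — and the a.e. positivity of $\psi$ used to make $h$ quasi-interior; neither point is substantial. Finally, I would close with the promised remark: since $V_H(t)f = f\circ\vartheta_t$ with $\{\vartheta_t\}$ the billiard flow, and since by Theorem \ref{CNSH2} honesty is equivalent to $\A=\overline{\T_H}$, which for the present conservative $H$ is in turn equivalent to $(V_H(t))_{t\ge 0}$ being conservative, the Proposition says precisely that under these assumptions the billiard flow loses no mass — equivalently, for $\mu$-a.e. initial datum $(x,v)$ the trajectory is well defined for all $t\ge0$, i.e. the rebound times $t_k(x,v)$ do not accumulate — which is the new property of the billiard flow announced in the introduction.
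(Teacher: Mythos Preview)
Your proposal is correct and follows essentially the same route as the paper: both set $h(x,v)=\psi(|v|)$ and invoke Proposition~\ref{propH}, using that specular reflection preserves $|v|$ so that \eqref{Hhy} holds (with equality) and that the integrability hypothesis on $(1+|v|)\psi$ together with $\sigma(\partial D)<\infty$ yields $h\in L^1_+$. The paper's proof is a one-liner deferring the verification to \cite[Corollary~2.3]{mmk}, whereas you spell out the integrability bookkeeping and the quasi-interiority; the minor caveat you flag---that the stated hypothesis allows $\psi\ge 0$ while quasi-interiority needs $\psi>0$ $\varrho$-a.e.---is real and is glossed over in the paper as well.
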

 \begin{proof} The proof follows from Proposition \ref{propH} since, as in \cite[Corollary 2.3]{mmk}, the mapping $h(x,v)=\psi(|v|)$ provides a quasi-interior element of $\lp$ satisfying \eqref{Hhy}.
 \end{proof}
 \begin{remark} From Remark \ref{C0rH1} we deduce that for any $t > 0$ and $f\in X_+$
 $$\lim_{k \to \infty}\left\|\B^+\int_0^t U_k(s)f \d s\right\|_{L^1_+} = 0,$$
 i.e.
 $$\lim_{k\to \infty}\int_{\Gamma_+}\left(\int_{t_k(\x)}^{t\wedge t_{k+1}(\x)}f(x_k - v_k(s - t_k(\x))v_k,v_k)\d s\right)\chi_{\{t_k(\x) < t\}}\d\mu_+(\x) = 0.$$
We wonder if such a property of the billiard flow is known in the literature.\end{remark}


\thebibliography{AAAA}
\renewcommand{\baselinestretch}{1}
\bibitem {abl1} {\sc L. Arlotti, J. Banasiak, B. Lods}, A new approach to transport
equations associated to a regular field: trace results and well-posedness,
{\em Mediterr. J. Math.}, {\bf 6} (2009), 367-402.
\bibitem {abl2} {\sc L. Arlotti, J. Banasiak, B. Lods}, On general transport
equations with abstract boundary conditions. The case of divergence free force field,
{\em Mediterr. J. Math.}, {\bf 8} (2011),1-35.
\bibitem {al3} {\sc L. Arlotti}, Explicit transport semigroup associated to abstract boundary conditions  \emph{Discrete Contin. Dyn. Syst.,} Dynamical systems, differential equations and applications. Suppl. Vol. I,  (2011) 102--111.
\bibitem {abl4} {\sc L. Arlotti}, Boundary conditions for streaming operator in a bounded convex body, {\em Transp. Theory Stat. Phys.}, {\bf 15} (1986), 959-972.
\bibitem {all} {\sc L. Arlotti,  B. Lods}, Substochastic semigroups for transport equations with conservative boundary conditions {\em J. Evolution Equations}, {\bf 5} (2005), 485-508.
\bibitem {almm} {\sc L. Arlotti, B. Lods and M. Mokhtar-Kharroubi}, On perturbed substochastic semigroups in abstract state spaces {\em Z. Anal. Anwend.}, \textbf{30} (2011), 457-495.
\bibitem {almm1} {\sc L. Arlotti, B. Lods and M. Mokhtar-Kharroubi}, Non-autonomous Honesty
theory in abstract state spaces with applications to linear kinetic equations, {\em Commun. Pure Appl. Anal.}, {\bf 13} (2014), 729--771.
\bibitem {almm2} {\sc L. Arlotti, B. Lods and M. Mokhtar-Kharroubi}, On perturbed substochastic once integrated semigroups in abstract state spaces, in preparation.

\bibitem {ba} {\sc J. Banasiak, L. Arlotti}, Perturbations of Positive Semigroups with Applications,
{\em Springer Monographs in Mathematics}, 2006.
\bibitem {bard} {\sc C. Bardos}, Probl\`{e}mes aux limites pour les \'{e}quations
aux d\'{e}riv\'{e}es partielles du premier ordre \`{a} coefficients r\'{e}els; th\'{e}or\`{e}mes
d'approximation, application \`{a} l'\'{e}quation de transport,
{\em Ann. Sci. \'{E}cole Norm. Sup.}, {\bf 3} (1970), 185--233.
\bibitem {beals} {\sc R. Beals, V. Protopopescu}, Abstract Time-dependent Transport Equations
{\em J. Math. Anal. Appl.}, {\bf 121} (1987), 370--405.
\bibitem{boulanouar}
{\sc M. Boulanouar}, New results in abstract time-dependent transport equations  {\em Transport Theory Statist. Phys.},  \textbf{40} (2011), 85--125.

\bibitem{cer} {\sc C. Cercignani}, The Boltzmann Equation and its Applications, {\em Springer Verlag}, 1988.

\bibitem{corn} {\sc I. P. Cornfeld, S. V. Fomin and Ya. G. Sinai}, Ergodic Theory, {\em Springer Verlag}, 1982.
\bibitem{dautray} {\sc R. Dautray, J. L. Lions}, Mathematical analysis
and numerical methods for science and technology. Vol. 6: Evolution
problems II, {\em  Berlin, Springer}, 2000.

\bibitem{Engel}
{\sc K.~J. Engel and R. Nagel,}  \textit{One-parameter semigroups for
linear evolution equations}, Springer, New-York, 2000.

\bibitem{lods} \textsc{B. Lods}, Semigroup generation properties of
streaming operators with noncontractive boundary conditions, {\it
Math. Comput. Modelling} {\bf 42} (2005) 1441--1462.

\bibitem {mmk} {\sc M. Mokhtar-Kharroubi}, On collisionless transport semigroups with boundary operators of norm one, {\em J. Evolution Equations}, {\bf 8} (2008), 327-362.
\bibitem {mmv} {\sc M. Mokhtar-Kharroubi, J.Voigt}, On honesty of perturbed substochastic $C_0$-semigroups in $L^1$-spaces, {\em J. Operator Theory}, (2009).
\bibitem {voigt} {\sc J. Voigt}, Functional analytic treatment of the initial boundary value for collisionless
gases, {\em M\"{u}nchen, Habilitationsschrift}, 1981.

\end{document}